\theoremstyle{plain}
\newtheorem{thm}{\protect\theoremname}
\theoremstyle{plain}
\newtheorem{lem}{\protect\lemmaname}
\theoremstyle{definition}
\newtheorem{defn}{\protect\definitionname}
\theoremstyle{plain}
\newtheorem{fact}{\protect\factname}
\newcommand{\Unif}{\mathrm{Unif}}
\def\esssup{{\mathrm{esssup}}}
\def\L{\mathsf{L}}
\def\P{{\rm P}}
\def\Q{{\rm Q}}
\def\R{{\rm R}}
\def\Levy{  d_{\rm P}}
\def\cconv{{\rm cconv}}
\def\conv{{\rm conv}}
\def\1{\mathbf{1}}
\def\d{{\text {\rm d}}}
\def\UrlSpecials{\do\~{\kern -.15em\lower .7ex\hbox{~}\kern .04em}} \catcode`~=13 
\DeclareMathAlphabet{\mathbsf}{OT1}{cmss}{bx}{n}
\DeclareMathAlphabet{\mathssf}{OT1}{cmss}{m}{sl}
\DeclareSymbolFont{bsfletters}{OT1}{cmss}{bx}{n}  
\DeclareSymbolFont{ssfletters}{OT1}{cmss}{m}{n}
\DeclareMathSymbol{\bsfGamma}{0}{bsfletters}{'000}
\DeclareMathSymbol{\ssfGamma}{0}{ssfletters}{'000}
\DeclareMathSymbol{\bsfDelta}{0}{bsfletters}{'001}
\DeclareMathSymbol{\ssfDelta}{0}{ssfletters}{'001}
\DeclareMathSymbol{\bsfTheta}{0}{bsfletters}{'002}
\DeclareMathSymbol{\ssfTheta}{0}{ssfletters}{'002}
\DeclareMathSymbol{\bsfLambda}{0}{bsfletters}{'003}
\DeclareMathSymbol{\ssfLambda}{0}{ssfletters}{'003}
\DeclareMathSymbol{\bsfXi}{0}{bsfletters}{'004}
\DeclareMathSymbol{\ssfXi}{0}{ssfletters}{'004}
\DeclareMathSymbol{\bsfPi}{0}{bsfletters}{'005}
\DeclareMathSymbol{\ssfPi}{0}{ssfletters}{'005}
\DeclareMathSymbol{\bsfSigma}{0}{bsfletters}{'006}
\DeclareMathSymbol{\ssfSigma}{0}{ssfletters}{'006}
\DeclareMathSymbol{\bsfUpsilon}{0}{bsfletters}{'007}
\DeclareMathSymbol{\ssfUpsilon}{0}{ssfletters}{'007}
\DeclareMathSymbol{\bsfPhi}{0}{bsfletters}{'010}
\DeclareMathSymbol{\ssfPhi}{0}{ssfletters}{'010}
\DeclareMathSymbol{\bsfPsi}{0}{bsfletters}{'011}
\DeclareMathSymbol{\ssfPsi}{0}{ssfletters}{'011}
\DeclareMathSymbol{\bsfOmega}{0}{bsfletters}{'012}
\DeclareMathSymbol{\ssfOmega}{0}{ssfletters}{'012}
\newcommand{\qednew}{\nobreak \ifvmode \relax \else
      \ifdim\lastskip<1.5em \hskip-\lastskip
      \hskip1.5em plus0em minus0.5em \fi \nobreak
      \vrule height0.75em width0.5em depth0.25em\fi}
\newcommand{\bone}{\mathbbm{1}}
\providecommand{\definitionname}{Definition}
\providecommand{\factname}{Fact}
\providecommand{\lemmaname}{Lemma}
\providecommand{\theoremname}{Theorem}
\providecommand{\definitionname}{Definition}
\providecommand{\factname}{Fact}
\providecommand{\lemmaname}{Lemma}
\providecommand{\theoremname}{Theorem}
\begin{document}
\title{The Entropy Method in Large Deviation Theory}
\author{Lei Yu\thanks{L. Yu is with the School of Statistics and Data Science, LPMC, KLMDASR,
and LEBPS, Nankai University, Tianjin 300071, China (e-mail: leiyu@nankai.edu.cn).
This work was supported by the NSFC grant 62101286 and the Fundamental
Research Funds for the Central Universities of China (Nankai University).}}
\maketitle
\begin{abstract}
This paper illustrates the power of the entropy method in addressing
problems from large deviation theory. We provide and review entropy
proofs for most fundamental results in large deviation theory, including
Cramer's theorem, the G\"artner--Ellis theorem, and Sanov's theorem.
Moreover, by the entropy method, we also strengthen Sanov's theorem
to the strong version. 
\end{abstract}

\section{Introduction }

Information theory was fundamentally established by the works of Harry
Nyquist and Ralph Hartley, in the 1920s, and Claude Shannon in the
1940s, which successfully addressed the theoretic limit of information
communication. Since then, information theory gradually influenced
other branches of mathematics. For example, the notion of (information)
entropy introduced by Shannon after he consulted von Neumann, the
father of the computer, was widely employed in probability theory,
functional analysis, ergodic theory, graph theory and so on. 

Information theory concerns realizing reliable and efficient communication
of a source over a noisy channel. Entropy and mutual information were
introduced by Shannon to determine the theoretical limit of such kind
of communication. Entropy and mutual information are special cases
of a more general concept, known as the \emph{relative entropy}. For
a nonnegative $\sigma$-finite measure $\mu$ and a probability measure
$Q$ defined on the same space such that $Q\ll\mu$, the relative
entropy of $Q$ with respect to (w.r.t.) $\mu$ is 
\[
D(Q\|\mu)=\int\log\left(\frac{\mathrm{d}Q}{\mathrm{d}\mu}\right)\mathrm{d}Q
\]
which is the expectation of the information density $\imath_{Q\|\mu}=\log\left(\frac{\mathrm{d}Q}{\mathrm{d}\mu}\right)$.
When $\mu$ is the counting measure on an countable set, $H(X)=-D(Q\|\mu)$
is the Shannon entropy of $X\sim Q$. When $\mu$ is the Lebesgue
measure on an Euclidean space, $h(X)=-D(Q\|\mu)$ is the differential
entropy of $X\sim Q$. The relative entropy was also widely used in
the case that $\mu$ is a probability measure. In this case, for two
probability measures $Q\ll P$, 
\begin{equation}
D(Q\|P)=\int\log\left(\frac{\mathrm{d}Q}{\mathrm{d}P}\right)\mathrm{d}Q.\label{eq:}
\end{equation}
If $Q$ is not absolutely continuous w.r.t. $P$, then $D(Q\|P):=+\infty$.
The mutual information between $X$ and $Y$ with $(X,Y)\sim P_{XY}$
is $I(X;Y)=D(P_{XY}\|P_{X}\otimes P_{Y})$. So, the relative entropy
is a general concept incorporating the entropy and mutual information.
The \emph{relative varentropy} of $Q$ with respect to (w.r.t.) $\mu$
is 
\[
\mathrm{V}(Q\|\mu)=\mathrm{Var}_{Q}\left[\imath_{Q\|\mu}(X)\right]=\mathrm{Var}_{Q}\left[\log\frac{\mathrm{d}Q}{\mathrm{d}\mu}(X)\right].
\]

\subsection{Organization }

In Section \ref{sec:Preliminaries:-Information-Proje}, we introduce
the concept of information projection (or shortly, I-projection),
which will be then used to characterize the asymptotic exponent in
the large deviation theory. In Section \ref{sec:Cramer's-Theorems},
we prove several versions of Cramer's theorem by the entropy method.
In particular, the entropy proof for the simple version of Cramer's
theorem in Section \ref{sec:Cramer's-Theorem-(Simple} is considered
as a paradigm to illustrate the power of the entropy method. Still
using the entropy method, we next generalize the Cramer's theorem
to the non-i.i.d. setting, and obtain the G\"artner--Ellis theorem
in Section \ref{sec:-Grtner=002013Ellis-Theorems}. We then focus
on another fundamental theorem in large deviation theory---Sanov's
theorem in Section \ref{sec:Sanov's-Theorems}. The entropy proof
for this theorem is given in Section \ref{sec:Sanov's-theorem}, and
the entropy proof for the strong version is given in Section \ref{sec:Sanov's-theorem-1}.
As an application, the entropy proof of Gibbs conditioning principle
is introduced in Section \ref{sec:Gibbs-conditioning-principle}. 

\subsection{Notations}

Let $\mathcal{X}$ be a Hausdorff topological space, and $\mathcal{B}_{\mathcal{X}}$
is the Borel $\sigma$-algebra on $\mathcal{X}$. Let  $P_{X}$ (or
shortly $P$) be a probability measure on $\mathcal{X}$. We also
use $Q_{X},R_{X}$ (or shortly $Q,R$) to denote another two probability
measures on $\mathcal{X}$. The probability measures $P_{X},Q_{X},R_{X}$
can be thought as the push-forward measures (or the distributions)
induced jointly by the same measurable function $X$ (random variable)
from an underlying measurable space to $\mathcal{X}$ and by different
probability measures $\P,\Q,\R$ defined on the underlying measurable
space. Without loss of generality, we assume that $X$ is the identity
map, and $\P,\Q,\R$ are the same as $P_{X},Q_{X},R_{X}$. So, $P_{X},Q_{X},R_{X}$
could be independently specified to arbitrary probability measures.
We say that all probability measures induced by the underlying measure
$\P$, together with the corresponding measurable spaces, constitute
the \emph{$\P$-system}. So, $P_{X}$ is in fact the distribution
of the random variable $X$ in the $\P$-system, where the letter
``$P$'' in the notation $P_{X}$ refers to the $\P$-system and
the subscript ``$X$'' refers to the random variable. When emphasizing
the random variables, we write $X\sim P_{X}$ to indicate that $X$
follows the distribution $P_{X}$ in the $\P$-system. 

We use $P_{X}^{\otimes n}$ to denote the $n$-fold product of $P_{X}$.
For a probability measure $P_{X}$ and a regular conditional distribution
(transition probability or Markov kernel) $P_{Y|X}$ from $\mathcal{X}$
to $\mathcal{Y}$, we denote $P_{X}P_{Y|X}$ as the joint probability
measure induced by $P_{X}$ and $P_{Y|X}$.  For a distribution $P_{X}$
on $\mathcal{X}$ and a measurable subset $A\subseteq\mathcal{X}$,
$P_{X}(\cdot|A)$ denotes the conditional probability measure given
$A$. For brevity, we write $P_{X}(x):=P_{X}(\{x\}),x\in\mathcal{X}$.
In particular, if $X\sim P_{X}$ is discrete, the restriction of $P_{X}$
to the set of singletons corresponds to the probability mass function
of $X$ in the $\P$-system. We use $Q_{X}\ll P_{X}$ to denote that
the distribution $Q_{X}$ is absolutely continuous w.r.t. $P_{X}$.
We use $X^{n}$ to denote a random vector $(X_{1},X_{2},...,X_{n})$
taking values on $(\mathcal{X}^{n},\mathcal{B}_{\mathcal{X}}^{\otimes n})$,
and use $x^{n}:=(x_{1},x_{2},...,x_{n})$ to denote its realization.
For an $n$-length vector $x^{n}$, we use $x^{i}$ to denote the
subvector consisting of the first $i$ components of $x^{n}$, and
$x_{i+1}^{n}$ to denote the subvector consisting of the last $n-i$
components. For a probability measure $P_{X^{n}}$ on $\mathcal{X}^{n}$,
we use $P_{X_{k}|X^{k-1}}$ to denote the regular conditional distribution
of $X_{k}$ given $X^{k-1}$ induced by $P_{X^{n}}$. For a measurable
function $f:\mathcal{X}\to\mathbb{R}$, sometimes we adopt the notation
$P_{X}(f):=\mathbb{E}_{P_{X}}[X]:=\int_{\mathcal{X}}f\ \mathrm{d}P_{X}$.
For a conditional probability measure $P_{X|Y}$, define the conditional
expectation operator induced by $P_{X|Y}$ as $P_{X|Y}(f)(y):=\int f\mathrm{d}P_{X|Y=y}$
for any measurable function $f:(\mathcal{X},\mathcal{B}_{\mathcal{X}})\to(\mathbb{R},\mathcal{B}_{\mathbb{R}})$
if the integral is well-defined for every $y$. When the random variable
$X$ is clear from the texture, we briefly denote $P_{X}$ as $P$.
For example, we denote the expectation $\mathbb{E}_{P_{X}}[X]$ as
$\mathbb{E}_{P}[X]$.

The relative entropy is defined in \eqref{eq:}. The conditional relative
entropy is defined as 
\[
D(Q_{X|W}\|P_{X|W}|Q_{W})=D(Q_{X|W}Q_{W}\|P_{X|W}Q_{W}).
\]

Given $n\geq1$, the \emph{empirical measure}  for a sequence $x^{n}\in\mathcal{X}^{n}$
is 
\[
\L_{x^{n}}:=\frac{1}{n}\sum_{i=1}^{n}\delta_{x_{i}}
\]
where $\delta_{x}$ is Dirac mass at the point $x\in\mathcal{X}$.

Denote $\Levy(P,Q)=\inf\{\delta>0:P(A)\le Q(A_{\delta})+\delta,\forall\textrm{ closed }A\subseteq\mathcal{Z}\}$
with $A_{\delta}:=\bigcup_{z\in A}\{z'\in\mathcal{Z}:d(z,z')<\delta\}$,
where $\mathcal{Z}$ is $\mathcal{X}$ or $\mathcal{Y}$. Here, $\Levy$
is known as the \emph{Lévy--Prokhorov metric} on $\mathcal{P}(\mathcal{Z})$
which is compatible with the weak topology on $\mathcal{P}(\mathcal{Z})$.
We use $B_{\delta}(R):=\{Q\in\mathcal{P}(\mathcal{Z}):\Levy(R,Q)<\delta\}$
and $B_{\delta]}(R):=\{Q\in\mathcal{P}(\mathcal{Z}):\Levy(R,Q)\leq\delta\}$
to respectively denote an open ball and a closed ball under the Lévy--Prokhorov
metric. We use $\overline{A}$, $A^{o}$, and $A^{c}:=\mathcal{Z}\backslash A$
to respectively denote the closure, interior, and complement of the
set $A\subseteq\mathcal{Z}$. Denote the sublevel set of the relative
entropy (or the divergence ``ball'') as $D_{\epsilon]}(P_{X}):=\{Q_{X}:D(Q_{X}\|P_{X})\le\epsilon\}$
for $\epsilon\ge0$. The Lévy--Prokhorov metric, the TV distance,
and the relative entropy admit the following relation: For any $Q_{X},P_{X}$,
\begin{equation}
\sqrt{2D(Q_{X}\|P_{X})}\ge\|Q_{X}-P_{X}\|_{\mathrm{TV}}\ge\Levy(Q_{X},P_{X}),\label{eq:D-TV-L}
\end{equation}
which implies for $\epsilon\ge0$, 
\begin{equation}
D_{\sqrt{2\epsilon}]}(P_{X})\subseteq B_{\epsilon]}(P_{X}).\label{eq:D-B}
\end{equation}
The first inequality in \eqref{eq:D-TV-L} is known as Pinsker's inequality,
and the second inequality follows by definition \cite{gibbs2002choosing}.

We denote $\inf\emptyset:=+\infty,\;\sup\emptyset:=-\infty$.  Given
two positive sequences $\{a_{n}\}_{n=1}^{\infty}$ and $\{b_{n}\}_{n=1}^{\infty}$,
we say that $a_{n}\lesssim b_{n}$ if $\limsup_{n\to\infty}a_{n}/b_{n}\le1$,
and $a_{n}\sim b_{n}$ if $a_{n}\lesssim b_{n}$ and $b_{n}\lesssim a_{n}$.

\section{\label{sec:Preliminaries:-Information-Proje}Preliminaries: Information
Projection }

For a set $\Gamma\subseteq\mathcal{P}(\mathcal{X})$, define 
\[
D(\Gamma\|P):=\inf_{R\in\Gamma}D(R\|P).
\]
Any probability measure $R^{*}$ attaining the infimum above is called
the \emph{information projection} or \emph{I-projection} of $P$ on
$\Gamma$ . 
\begin{lem}
If $\Gamma$ is closed in the weak topology, then the I-projection
exists. 
\end{lem}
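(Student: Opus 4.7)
The plan is a standard lower semi-continuity plus compactness argument applied to a minimizing sequence. First, one should dispose of the trivial case $D(\Gamma\|P)=+\infty$: any $R\in\Gamma$ then attains the infimum (assuming $\Gamma$ is non-empty; otherwise the statement is vacuous). So I will assume $c:=D(\Gamma\|P)<\infty$ and pick $\{R_n\}_{n\ge 1}\subseteq\Gamma$ with $D(R_n\|P)\to c$; by dropping finitely many terms we may take $D(R_n\|P)\le c+1$ for all $n$.

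The two analytic inputs I rely on are: (i) $R\mapsto D(R\|P)$ is lower semi-continuous in the weak topology on $\mathcal{P}(\mathcal{X})$, which follows from the Donsker--Varadhan variational representation
$$D(R\|P)\;=\;\sup\bigl\{\,R(f)-\log P(e^f)\,:\,f\in C_b(\mathcal{X})\,\bigr\},$$
exhibiting $D(\cdot\|P)$ as a pointwise supremum of weakly continuous functionals; and (ii) for every $\alpha<\infty$ the sublevel set $\{R:D(R\|P)\le\alpha\}$ is relatively weakly compact. Fact (ii) reduces to uniform tightness of the family, which follows by combining the tightness of $P$ with a standard Csisz\'ar-type inequality that bounds $R(A)$ in terms of $D(R\|P)$ and $P(A)$, and then invoking Prokhorov's theorem.

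Combining these, $\{R_n\}$ lies in the sublevel set $\{D(\cdot\|P)\le c+1\}$, so by (ii) some subsequence $R_{n_k}$ converges weakly to a limit $R^*$. Since $\Gamma$ is weakly closed, $R^*\in\Gamma$; and by (i),
$$D(R^*\|P)\;\le\;\liminf_{k\to\infty}D(R_{n_k}\|P)\;=\;c,$$
while the reverse inequality is automatic from $R^*\in\Gamma$. Hence $D(R^*\|P)=c$, so $R^*$ is an I-projection. The main technical obstacle lurks in (ii): weak compactness of entropy sublevel sets is textbook for Polish $\mathcal{X}$ (or more generally when $P$ is Radon/tight), but on a bare Hausdorff space one would need to invoke additional regularity on $P$. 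I would therefore expect the intended ambient hypothesis on $\mathcal{X}$ to make Prokhorov's theorem available, under which both (i) and (ii) hold and the argument above closes cleanly.
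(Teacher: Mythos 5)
Your proposal is correct and is exactly the argument the paper uses: the paper's one-line proof invokes precisely the lower semicontinuity of $D(\cdot\|P)$ and the weak compactness of its sublevel sets, which you have simply spelled out via a minimizing sequence. Your closing caveat about needing $\mathcal{X}$ Polish (or $P$ tight) for Prokhorov's theorem is a fair observation about a hypothesis the paper leaves implicit, but it does not change the substance of the argument.
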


\begin{proof}
This lemma follows by the lower semicontinuity of $R\mapsto D(R\|P)$
and the compactness of the sublevel sets of $R\mapsto D(R\|P)$ under
the weak topology. 
\end{proof}
If $\Gamma$ is convex and such $R^{*}$ exists, the convexity of
$\Gamma$ guarantees its uniqueness since $D(R\|P)$ is strictly convex
in $R$. Another important property of the I-projection is the following
equivalence. 
\begin{thm}
\cite[Theorem 2.2]{csiszar1975divergence} \label{thm:-A-probability}
A probability measure $Q\in\Gamma\cap D_{\infty}(P)$ is the I-projection
of $P$ on the convex set $\Gamma$ of probability measures iff every
$R\in\Gamma$ satisfies 
\begin{equation}
D(R\|P)\ge D(R\|Q)+D(Q\|P).\label{eq:-25}
\end{equation}
\end{thm}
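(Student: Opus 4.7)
The ``if'' direction is immediate from $D(R\|Q)\ge0$: the hypothesis forces $D(R\|P)\ge D(Q\|P)$ for every $R\in\Gamma$, and $Q\in\Gamma$ then makes this the infimum, so $Q$ is the I-projection.

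For the ``only if'' direction, I fix $R\in\Gamma$; if $D(R\|P)=\infty$ the inequality \eqref{eq:-25} is trivial, so I assume $R\ll P$. The strategy is to test the optimality of $Q$ along the convex combination $R_{t}:=(1-t)Q+tR$, which lies in $\Gamma$ by convexity. Minimality of $Q$ gives
\begin{equation}
0\le\frac{D(R_{t}\|P)-D(Q\|P)}{t}\qquad\text{for every }t\in(0,1],\label{eq:plan-opt}
\end{equation}
and the plan is to let $t\to0^{+}$: the right derivative should evaluate to $D(R\|P)-D(R\|Q)-D(Q\|P)$, which is precisely \eqref{eq:-25}.

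To compute this right derivative, write $q:=\mathrm{d}Q/\mathrm{d}P$ and $r:=\mathrm{d}R/\mathrm{d}P$, set $\phi(x):=x\log x$, and split the integration at $A:=\{q=0\}$. On $\{q>0\}$, the tangent-line bound $\phi(x)\ge\phi(q)+(\log q+1)(x-q)$ applied with $x=(1-t)q+tr$ gives, after integrating against $P$ and using $\int(r-q)\,\mathrm{d}P=0$, a lower bound whose slope in $t$ is $\int_{\{q>0\}}(r-q)\log q\,\mathrm{d}P$. On $A$, direct computation yields $\phi((1-t)q+tr)-\phi(q)=tr(\log t+\log r)$. Substituting both contributions into \eqref{eq:plan-opt} and dividing by $t$ produces
\[
0\le R(A)\log t+\int_{A}r\log r\,\mathrm{d}P+\int_{\{q>0\}}(r-q)\log q\,\mathrm{d}P.
\]

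The main obstacle is the possibility $R\not\ll Q$, i.e.\ $R(A)>0$: in that case $R(A)\log t\to-\infty$ as $t\to0^{+}$, whereas the two remaining integrals are finite (they are controlled by $D(R\|P),D(Q\|P)<\infty$ together with the universal lower bound $\phi\ge-1/\mathrm{e}$), yielding a contradiction. Hence every $R\in\Gamma$ with $D(R\|P)<\infty$ must satisfy $R\ll Q$; then $r=0$ $P$-a.e.\ on $A$, the first two terms drop out, and the identities $\int r\log q\,\mathrm{d}P=D(R\|P)-D(R\|Q)$ and $\int q\log q\,\mathrm{d}P=D(Q\|P)$ rearrange the surviving inequality into the claimed Pythagorean bound. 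The only further technicality is justifying the passage $t\to0^{+}$ inside the $\{q>0\}$ integral, which follows from monotonicity of the difference quotient of the convex function $\phi$.
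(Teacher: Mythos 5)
Your overall strategy is exactly the one the paper gestures at (differentiating $D$ along the segment from $Q$ to $R$, as in Csisz\'ar's Theorem 2.2; note the paper's $P_{\lambda}=\lambda P+(1-\lambda)Q$ is a typo for $\lambda R+(1-\lambda)Q$, and your $R_t$ is the correct perturbation). The ``if'' direction and the two identities you invoke at the end are fine. But there is a direction error at the central step. Writing $x_t=(1-t)q+tr$, the tangent-line bound $\phi(x_t)\ge\phi(q)+(\log q+1)(x_t-q)$ gives a \emph{lower} bound on $D(R_t\|P)$, hence a lower bound on the difference quotient $\bigl(D(R_t\|P)-D(Q\|P)\bigr)/t$. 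You cannot ``substitute'' a lower bound for the right-hand side of $0\le\bigl(D(R_t\|P)-D(Q\|P)\bigr)/t$ and conclude $0\le R(A)\log t+\int_A r\log r\,\mathrm{d}P+\int_{\{q>0\}}(r-q)\log q\,\mathrm{d}P$: from $0\le X$ and $X\ge Y$ nothing follows about the sign of $Y$. In particular, in the case $R(A)>0$ your ``contradiction'' evaporates, since a difference quotient that is bounded below by something tending to $-\infty$ is no contradiction at all. (A smaller bookkeeping slip: on $\{q>0\}$ one has $\int_{\{q>0\}}(r-q)\,\mathrm{d}P=-R(A)$, not $0$.)

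The fix is precisely the observation you defer to the last sentence, and it should carry the whole argument rather than be a footnote: for convex $\phi$ the pointwise difference quotients $t\mapsto\bigl(\phi(x_t)-\phi(q)\bigr)/t$ \emph{decrease} as $t\downarrow0$ and are dominated above by the integrable $t=1$ quotient $\phi(r)-\phi(q)$, so by monotone convergence
\[
\lim_{t\downarrow0}\frac{D(R_t\|P)-D(Q\|P)}{t}=\int_{\{q>0\}}(\log q+1)(r-q)\,\mathrm{d}P+\int_{A}(-\infty)\,r\,\mathrm{d}P .
\]
Optimality of $Q$ forces this right derivative to be $\ge0$, which first rules out $R(A)>0$ and then, via your two identities, yields \eqref{eq:-25}. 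So the inequality you need is an \emph{equality} for the limit of the difference quotient (equivalently an upper bound converging to the tangent slope), not the pointwise lower bound you wrote down. With that step reoriented, the proof is correct and coincides with the paper's intended route.
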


The proof of this theorem is based on differentiating $D(P_{\lambda}\|P)$
w.r.t. $\lambda$ at $\lambda=0$, where $P_{\lambda}=\lambda P+(1-\lambda)Q$
with $\lambda\in[0,1]$; see details in \cite[Theorem 2.2]{csiszar1975divergence}.
The inequality \eqref{eq:-25} can be written as 
\begin{equation}
\int\left(\log\frac{\mathrm{d}Q}{\mathrm{d}P}\right)\mathrm{d}R\ge D(Q\|P),\label{eq:-25-2}
\end{equation}
or 
\begin{equation}
\int\left(\log\frac{\mathrm{d}Q}{\mathrm{d}P}\right)\mathrm{d}(R-Q)\ge0.\label{eq:-25-2-1}
\end{equation}
So, for the I-projection $Q$, the set 
\[
\mathcal{H}:=\left\{ R:\int\left(\log\frac{\mathrm{d}Q}{\mathrm{d}P}\right)\mathrm{d}(R-Q)=0\right\} 
\]
constitutes a ``supporting hyperplane'' of $\Gamma$. Moreover,
the function $1+\log\frac{\mathrm{d}Q}{\mathrm{d}P}$ can be seen
as the ``gradient'' of $Q\mapsto D(Q\|P)$, and $\mathcal{H}$ is
the ``tangent hyperplane'' of $Q\mapsto D(Q\|P)$. Analogously to
Euclidean spaces, considering $D(Q\|P)$ as a ``distance'', it holds
that $Q$ is the projection of $P$ to $\mathcal{H}$, and 
\begin{equation}
D(R\|P)=D(R\|Q)+D(Q\|P),\;\forall R\in\mathcal{H}.\label{eq:-25-3}
\end{equation}

As a consequence of Theorem \ref{thm:-A-probability}, when $\Gamma$
is specified by linear constraints, the I-projection can be written
out explicitly. To illustrate this point, we let $f:\mathcal{X}\to\mathbb{R}$
be a measurable function. Consider the following I-projection problem:
\begin{align*}
 & \gamma(\alpha):=\inf_{Q:Q(f)=\alpha}D(Q\|P).
\end{align*}
To give the I-projection for $\gamma(\alpha)$, we first introduce
the following lemma, which can be easily verified by definition.
\begin{lem}
\label{lem:identity} For a measurable function $f$ and $\lambda\in\mathbb{R}$
such that $P(e^{\lambda f})<+\infty$, define a probability measure
$Q_{\lambda}$ with density 
\[
\frac{\mathrm{d}Q_{\lambda}}{\mathrm{d}P}=\frac{e^{\lambda f}}{P(e^{\lambda f})},
\]
then for any $R$, 
\begin{align}
D(R\|P)-D(Q_{\lambda}\|P) & =D(R\|Q_{\lambda})+\lambda\left(R(f)-Q_{\lambda}(f)\right)\nonumber \\
 & \geq\lambda\left(R(f)-Q_{\lambda}(f)\right).\label{eq:-55-1-3}
\end{align}
\end{lem}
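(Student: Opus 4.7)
The plan is to verify the identity by a direct Radon--Nikodym computation, and then read off the inequality as a consequence of nonnegativity of relative entropy.

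First I would handle the support issue. Since $P(e^{\lambda f}) < +\infty$ and $e^{\lambda f} > 0$ everywhere, the measures $Q_\lambda$ and $P$ are mutually absolutely continuous. Hence $R \ll P$ if and only if $R \ll Q_\lambda$, and if $R \not\ll P$ both sides of the claimed identity are $+\infty$ and \eqref{eq:-55-1-3} holds trivially. So I may assume $R \ll P$ (equivalently $R \ll Q_\lambda$).

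Next I would compute the two relative entropies against $P$ using the explicit log-density
\[
\log \frac{\mathrm{d}Q_\lambda}{\mathrm{d}P} = \lambda f - \log P(e^{\lambda f}).
\]
Integrating against $Q_\lambda$ gives
\[
D(Q_\lambda\|P) = \lambda\, Q_\lambda(f) - \log P(e^{\lambda f}).
\]
For $D(R\|P)$, I would apply the chain rule
\[
\log \frac{\mathrm{d}R}{\mathrm{d}P} = \log \frac{\mathrm{d}R}{\mathrm{d}Q_\lambda} + \log \frac{\mathrm{d}Q_\lambda}{\mathrm{d}P},
\]
and integrate against $R$ to obtain
\[
D(R\|P) = D(R\|Q_\lambda) + \lambda\, R(f) - \log P(e^{\lambda f}).
\]
Subtracting the two displays yields the claimed equality
\[
D(R\|P) - D(Q_\lambda\|P) = D(R\|Q_\lambda) + \lambda\bigl(R(f) - Q_\lambda(f)\bigr),
\]
and the inequality follows at once from $D(R\|Q_\lambda) \ge 0$.

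There is essentially no hard step here; this is a bookkeeping lemma. The only mild subtlety is the absolute-continuity clause at the outset, which must be dispatched before the chain rule can be invoked. The point of stating the lemma in this form is not the proof but the tilted-measure perspective it provides: $Q_\lambda$ is an exponential tilt of $P$, and \eqref{eq:-55-1-3} is exactly the Bregman-type identity that will let us characterize I-projections onto half-spaces of the form $\{R : R(f) \ge \alpha\}$ and drive the Cram\'er and G\"artner--Ellis proofs later.
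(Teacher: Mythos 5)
Your proof is correct and matches the paper's intent: the paper gives no explicit proof, stating only that the lemma "can be easily verified by definition," and your direct Radon--Nikodym computation (chain rule for the log-densities, integrate against $R$ and $Q_\lambda$, subtract, then use $D(R\|Q_\lambda)\ge 0$) is exactly that verification. The preliminary absolute-continuity remark is a sensible addition at no cost.
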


If there is $\lambda^{*}\in\mathbb{R}$ such that $P(e^{\lambda^{*}f})<+\infty$
and $Q_{\lambda^{*}}(f)=\alpha$, then by the lemma above, we have
for any $R\in\Gamma$, 
\begin{equation}
D(R\|P)=D(R\|Q_{\lambda^{*}})+D(Q_{\lambda^{*}}\|P),\label{eq:-35}
\end{equation}
which is an analogue of Pythagoras' theorem for information distance.
Further, by Theorem \ref{thm:-A-probability}, $Q_{\lambda^{*}}$
is the I-projection for $\gamma(\alpha)$. For this case, 
\[
\gamma(\alpha)=\lambda^{*}\alpha-\log P(e^{\lambda^{*}f}).
\]
In fact, the RHS above is equal to 
\begin{align}
 & \gamma^{*}(\alpha):=\sup_{\lambda\in\mathbb{R}}\lambda\alpha-\log P(e^{\lambda f}).\label{eq:-98}
\end{align}
In other words, when $\lambda^{*}$ exists, it attains the supremum
in \eqref{eq:-98}. We next show that $\gamma(\alpha)=\gamma^{*}(\alpha)$
always holds, even when $\lambda^{*}$ does not exist. 
\begin{thm}[Duality for the I-Projection (Equality Constraint)]
\label{thm:duality-I-projection} It holds that for any $\alpha\in\mathbb{R}$,
\begin{equation}
\gamma(\alpha)=\gamma^{*}(\alpha).\label{eq:-47-2-1}
\end{equation}
\end{thm}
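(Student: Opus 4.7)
The plan is to prove the two inequalities $\gamma(\alpha)\ge\gamma^{*}(\alpha)$ and $\gamma(\alpha)\le\gamma^{*}(\alpha)$ separately. The first inequality is a direct consequence of Lemma \ref{lem:identity}: for any feasible $R$ (i.e., $R(f)=\alpha$) and any $\lambda$ with $P(e^{\lambda f})<\infty$, the bound \eqref{eq:-55-1-3} combined with the direct computation
\[
D(Q_{\lambda}\|P)=\int(\lambda f-\log P(e^{\lambda f}))\,\mathrm{d}Q_{\lambda}=\lambda Q_{\lambda}(f)-\log P(e^{\lambda f})
\]
yields $D(R\|P)\ge\lambda\alpha-\log P(e^{\lambda f})$. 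Taking the supremum over $\lambda\in D_{\Lambda}$ (those $\lambda$ for which the MGF is finite; values outside contribute $-\infty$ and are harmless) and then the infimum over $R$ gives $\gamma(\alpha)\ge\gamma^{*}(\alpha)$.

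For the reverse inequality I proceed by case analysis on whether the supremum defining $\gamma^{*}(\alpha)$ is attained. If $\gamma^{*}(\alpha)=+\infty$ there is nothing to prove. If the supremum is attained at some $\lambda^{*}\in\mathbb{R}$, then as noted in the paragraph preceding the theorem, the first-order optimality condition forces $Q_{\lambda^{*}}(f)=\Lambda'(\lambda^{*})=\alpha$, and the explicit computation of $D(Q_{\lambda^{*}}\|P)$ from the previous paragraph gives $D(Q_{\lambda^{*}}\|P)=\lambda^{*}\alpha-\log P(e^{\lambda^{*}f})=\gamma^{*}(\alpha)$. Since $Q_{\lambda^{*}}$ is a feasible point for the infimum defining $\gamma(\alpha)$, this immediately yields $\gamma(\alpha)\le\gamma^{*}(\alpha)$.

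The delicate case is when $\gamma^{*}(\alpha)$ is finite but the supremum is \emph{not} attained; this happens precisely when $\alpha$ lies on the boundary of the range $\{\Lambda'(\lambda):\lambda\in D_{\Lambda}^{o}\}$. My plan is to pick a sequence $\lambda_{n}$ with $\lambda_{n}\alpha-\log P(e^{\lambda_{n}f})\to\gamma^{*}(\alpha)$, consider the tilted measures $Q_{\lambda_{n}}$, and extract via the weak compactness of the sublevel sets of $D(\cdot\|P)$ a subsequential weak limit $Q^{*}$. The lower semicontinuity of $D(\cdot\|P)$ then gives $D(Q^{*}\|P)\le\liminf_{n}D(Q_{\lambda_{n}}\|P)$, and using the identity $D(Q_{\lambda_{n}}\|P)=\lambda_{n}Q_{\lambda_{n}}(f)-\Lambda(\lambda_{n})$ together with $\lambda_{n}\alpha-\Lambda(\lambda_{n})\to\gamma^{*}(\alpha)$ one expects $D(Q^{*}\|P)\le\gamma^{*}(\alpha)$, and $Q^{*}$ to be a feasible point (with $Q^{*}(f)=\alpha$), which would close the argument.

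The main obstacle is this last verification: since $f$ need not be bounded, weak convergence $Q_{\lambda_{n}}\Rightarrow Q^{*}$ does not automatically transfer the mean, and moreover when $\lambda_{n}$ diverges the term $\lambda_{n}Q_{\lambda_{n}}(f)$ is an indeterminate product. To handle this I plan to use a Donsker--Varadhan-style tail bound of the form $Q(A)\le\frac{D(Q\|P)+e^{-1}}{\log(1/P(A))}$, which from the uniform control $D(Q_{\lambda_{n}}\|P)\le\gamma^{*}(\alpha)+o(1)$ yields uniform integrability of $f$ under the family $\{Q_{\lambda_{n}}\}$ (provided $P$ has sufficiently thin tails in the direction of $f$, which in this regime is implied by the existence of $\Lambda$ on an appropriate half-line). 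As a fallback, one can truncate $f$ to $f_{M}=(-M)\vee f\wedge M$, apply the theorem in the bounded regime (where $D_{\Lambda}=\mathbb{R}$ and the boundary behaviour of $\Lambda'$ is transparent), and then recover the general case by letting $M\to\infty$ and invoking monotone convergence of the corresponding Legendre transforms.
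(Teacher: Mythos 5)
Your ``$\ge$'' direction is correct and is exactly the paper's argument: combine Lemma \ref{lem:identity} with the identity $D(Q_{\lambda}\|P)=\lambda Q_{\lambda}(f)-\log P(e^{\lambda f})$ and take $\sup_{\lambda}$, $\inf_{R}$.

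The ``$\le$'' direction has a genuine gap, and it sits precisely in the branch you treat as the easy one. You claim that if the supremum defining $\gamma^{*}(\alpha)$ is attained at some $\lambda^{*}$, then first-order optimality forces $Q_{\lambda^{*}}(f)=\Lambda'(\lambda^{*})=\alpha$. That is only true when $\lambda^{*}$ lies in the interior of $\mathcal{D}_{\Lambda}$. If $\lambda_{\max}:=\sup\mathcal{D}_{\Lambda}<\infty$ with $\Lambda(\lambda_{\max})<\infty$ and $Q_{\lambda}(f)<\alpha$ for all $\lambda\in\mathcal{D}_{\Lambda}$ (i.e., $\Lambda$ is not steep), then $\lambda\mapsto\lambda\alpha-\Lambda(\lambda)$ is increasing on $\mathcal{D}_{\Lambda}$ and the supremum \emph{is} attained, at the endpoint $\lambda_{\max}$ --- but $Q_{\lambda_{\max}}(f)<\alpha$, so $Q_{\lambda_{\max}}$ is not feasible for the primal problem and your argument produces nothing. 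This is not a pathology; it is Case III of the paper's proof, which requires a real construction: truncate to $f_{b}=\min\{f,b\}$, show (Facts \ref{fact:For-all-sufficiently-1} and \ref{fact:Given-any-,-1}) that for large $b$ there is $\lambda_{b}>\lambda_{\max}$ with $Q_{b,\lambda_{b}}(f)=\alpha$, use $Q_{b,\lambda_{b}}$ as a feasible point, and let $b\to\infty$ with $\lambda_{b}\downarrow\lambda_{\max}$. The same failure occurs in the degenerate case $\mathcal{D}_{\Lambda}=\{0\}$ (where $P(f^{+})=P(f^{-})=+\infty$): the supremum is attained at $\lambda^{*}=0$ with value $0$, $Q_{0}=P$ has no mean at all, and yet one must still exhibit feasible measures of arbitrarily small divergence --- the paper does this by an explicit two-sided conditioning-and-mixing construction.

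Your ``delicate case'' (supremum finite but not attained) machinery does not rescue this, because in the situations above the supremum \emph{is} attained; and where the supremum is genuinely not attained ($\lambda_{\max}=+\infty$, forcing $\alpha\ge\esssup_{P}f$), the weak-limit-plus-lower-semicontinuity plan still leaves open exactly the point you flag: verifying $Q^{*}(f)=\alpha$ in the limit. The paper avoids weak limits entirely there and instead identifies the candidate directly as $P(\cdot|f^{-1}(\alpha))$ when $P(f^{-1}(\alpha))>0$, and shows both sides are $+\infty$ otherwise. Your fallback of truncating $f$ and ``invoking monotone convergence of the Legendre transforms'' is too vague to assess: the primal feasible sets $\{Q:Q(f_{M})=\alpha\}$ and $\{Q:Q(f)=\alpha\}$ are not nested, and $f_{M}\to f$ is not monotone for a two-sided truncation, so neither side of the duality passes to the limit for free. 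To repair the proof you need, at minimum, the one-sided truncation argument of the paper's Case III (and its mirror image) and a separate treatment of $\mathcal{D}_{\Lambda}=\{0\}$.
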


The proof of this theorem is deferred to Section \ref{sec:Proof-of-Theorem},
since it is long and technical. 

We now consider the inequality constraint. Define 
\begin{align*}
 & \gamma_{+}(\alpha):=\inf_{Q:Q(f)\ge\alpha}D(Q\|P),
\end{align*}
and 
\begin{align*}
 & \gamma_{+}^{*}(\alpha):=\sup_{\lambda\ge0}\lambda\alpha-\log P(e^{\lambda f}).
\end{align*}
Following proof steps similar to those of Theorem \ref{thm:duality-I-projection},
we obtain the following duality. 
\begin{thm}[Duality for the I-Projection (Inequality Constraint)]
\label{thm:duality-I-projection-1} Let $f:\mathcal{X}\to\mathbb{R}$
be a measurable function. Then, it holds that for any $\alpha\in\mathbb{R}$,
\begin{equation}
\gamma_{+}(\alpha)=\gamma_{+}^{*}(\alpha).\label{eq:-47-2}
\end{equation}
\end{thm}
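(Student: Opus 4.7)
The plan is to mimic the proof of Theorem \ref{thm:duality-I-projection} in two stages: first establish weak duality, then obtain strong duality by reducing to the equality-constraint case.

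For weak duality, $\gamma_+(\alpha) \geq \gamma_+^*(\alpha)$: for any $\lambda \geq 0$ with $P(e^{\lambda f}) < \infty$ and any $R$ satisfying $R(f) \geq \alpha$, Lemma~\ref{lem:identity} gives
\begin{equation}
D(R\|P) \geq D(Q_\lambda\|P) + \lambda(R(f) - Q_\lambda(f)) = \lambda R(f) - \log P(e^{\lambda f}) \geq \lambda\alpha - \log P(e^{\lambda f}),
\end{equation}
where the last inequality crucially uses $\lambda \geq 0$ (this is where the sign constraint in the dual enters). Taking the infimum over feasible $R$ and the supremum over admissible $\lambda \geq 0$ yields the bound; for $\lambda = 0$ the dual objective is $0$, so $\gamma_+^*(\alpha) \geq 0$ is immediate.

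For strong duality, $\gamma_+(\alpha) \leq \gamma_+^*(\alpha)$, I split into cases based on whether $P$ itself is feasible. If $P(f) \geq \alpha$, then $R = P$ gives $\gamma_+(\alpha) \leq D(P\|P) = 0$, while $\lambda = 0$ gives $\gamma_+^*(\alpha) \geq 0$, so combined with weak duality both quantities equal $0$. If $P(f) < \alpha$, I reduce to the equality case by proving $\gamma_+(\alpha) = \gamma(\alpha)$ and $\gamma_+^*(\alpha) = \gamma^*(\alpha)$; Theorem~\ref{thm:duality-I-projection} then closes the argument. The first equality uses that $\beta \mapsto \gamma(\beta)$ is convex, inherited from convexity of $D(\cdot\|P)$ together with linearity of $Q \mapsto Q(f)$, and attains its minimum $0$ at $\beta = P(f)$, hence is nondecreasing on $[P(f),\infty)$, so $\inf_{\beta \geq \alpha}\gamma(\beta) = \gamma(\alpha)$ whenever $\alpha > P(f)$. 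The second equality uses that $\lambda \mapsto \lambda\alpha - \log P(e^{\lambda f})$ is concave with right-derivative $\alpha - P(f) > 0$ at $\lambda = 0$, so its maximizer over $\mathbb{R}$ already lies in $[0,\infty)$ and restricting the supremum to nonnegative $\lambda$ does not decrease the value.

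The main obstacle I anticipate is handling degenerate situations in which $P(f)$ is ill-defined (for instance $\int f^+\,\mathrm{d}P = \int f^-\,\mathrm{d}P = \infty$) or in which $P(e^{\lambda f}) = \infty$ for all $\lambda > 0$. The clean remedy is to replace $P(f)$ throughout by the right-derivative of the convex map $\lambda \mapsto \log P(e^{\lambda f})$ at $\lambda = 0$, which always exists in $[-\infty,+\infty]$ because the map is convex and finite at $0$; with this substitution both the monotonicity argument for $\gamma$ and the concavity argument for the dual survive verbatim. In the degenerate case where the log-MGF is infinite for every positive $\lambda$, the dual collapses (the supremum is attained at $\lambda = 0$ when $\alpha$ does not exceed this right-derivative, and is $+\infty$ otherwise), and the primal can be matched to it directly, bypassing the reduction to Theorem~\ref{thm:duality-I-projection}.
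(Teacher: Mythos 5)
Your proposal is correct, but it takes a genuinely different route from the paper's. The paper proves Theorem~\ref{thm:duality-I-projection-1} only by the remark ``following proof steps similar to those of Theorem~\ref{thm:duality-I-projection}'', i.e.\ by rerunning the appendix's four-case analysis (tilted measures $Q_{\lambda}$, truncations $f_{b}$, boundary behaviour of $\mathcal{D}_{\Lambda}$) with $\lambda$ restricted to $[0,\infty)$. You instead invoke Theorem~\ref{thm:duality-I-projection} as a black box and reduce the inequality-constrained problem to the equality-constrained one: weak duality from Lemma~\ref{lem:identity} with the observation that $\lambda\ge0$ is exactly what lets you pass from $R(f)$ to $\alpha$; the trivial case $P(f)\ge\alpha$ where everything is $0$; and, for $\alpha>P(f)$, the identities $\gamma_{+}(\alpha)=\gamma(\alpha)$ (convexity and monotonicity of $\gamma$ on $[P(f),\infty)$, which the paper itself asserts for $\gamma_{+}$ after Theorem~\ref{thm:Cramer}) and $\gamma_{+}^{*}(\alpha)=\gamma^{*}(\alpha)$ (Jensen gives $\lambda\alpha-\Lambda(\lambda)\le\lambda(\alpha-P(f))<0$ for $\lambda<0$, so negative $\lambda$ never help). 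What your route buys is brevity and the avoidance of any new measure constructions; what it costs is that all the hard analysis is inherited from Theorem~\ref{thm:duality-I-projection}, so it does not yield a self-contained proof of either theorem. One remark that simplifies your own plan: for the direction $\gamma_{+}(\alpha)\le\gamma_{+}^{*}(\alpha)$ you never need the equality $\gamma_{+}(\alpha)=\gamma(\alpha)$, only the trivial inclusion $\{Q:Q(f)=\alpha\}\subseteq\{Q:Q(f)\ge\alpha\}$, which gives $\gamma_{+}(\alpha)\le\gamma(\alpha)=\gamma^{*}(\alpha)=\gamma_{+}^{*}(\alpha)$ whenever $\alpha>P(f)$ (including $P(f)=-\infty$). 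This also disposes of the degenerate situation you flag at the end --- $P(e^{\lambda f})=+\infty$ for all $\lambda>0$ --- without any separate primal construction: there $\gamma^{*}(\alpha)=\gamma_{+}^{*}(\alpha)=0$ by the same Jensen bound, and Theorem~\ref{thm:duality-I-projection} already yields $\gamma(\alpha)=0$, hence $\gamma_{+}(\alpha)=0$. As written, your sketch leaves that degenerate case asserted rather than proved (``can be matched to it directly''), so you should either supply the mixture/truncation construction or, better, adopt the one-line reduction just described.
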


\section{\label{sec:Cramer's-Theorems}Cramer's Theorems}

\subsection{\label{sec:Cramer's-Theorem-(Simple}Cramer's Theorem (Simple Version)}

The entropy method is a mathematical method involving relative entropies
(or entropies). To illustrate the power of the entropy method, we
now provide an entropy proof of Cramer's theorem. Cramer's theorem
is the most fundamental result in the large deviation theory, which
characterizes the decay exponent of the tail probability for normalized
sum of i.i.d. real-valued random variables.
\begin{thm}[Cramer's Theorem]
\label{thm:Cramer} Let $X_{i}\sim P,i=1,2,\dots$ be i.i.d. real-valued
random variables. For any $\alpha\in\mathbb{R}$, 
\begin{align*}
\lim_{n\to\infty}-\frac{1}{n}\log\mathbb{P}\left\{ \frac{1}{n}\sum_{i=1}^{n}X_{i}\ge\alpha\right\}  & =\gamma(\alpha),
\end{align*}
where 
\begin{align*}
\gamma_{+}(\alpha) & :=\inf_{Q:\mathbb{E}_{Q}\left[X\right]\ge\alpha}D(Q\|P).
\end{align*}
\end{thm}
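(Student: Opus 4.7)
The plan is to establish matching lower and upper bounds on the decay exponent of $p_n := P^{\otimes n}(A_n)$, where $A_n := \{\sum_{i=1}^n X_i \ge n\alpha\}$. For the upper bound on $p_n$ I would use a change-of-measure (tilting) argument packaged through Lemma \ref{lem:identity}; for the matching lower bound on $p_n$ I would invoke the entropy method via the data processing inequality. The duality identity $\gamma_+(\alpha) = \gamma_+^*(\alpha) = \sup_{\lambda \ge 0} \lambda\alpha - \log P(e^{\lambda X})$ from Theorem \ref{thm:duality-I-projection-1} is the bridge that lets the two bounds meet.

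For the \emph{lower bound on the exponent} (i.e.\ upper bound on $p_n$), fix any $\lambda \ge 0$ with $P(e^{\lambda X}) < \infty$ and let $Q_\lambda$ be the tilted measure of Lemma \ref{lem:identity} with $f(x) = x$. Changing measure gives
\[
p_n \;=\; \int_{A_n}\!\left(P(e^{\lambda X})\right)^n e^{-\lambda\sum_i x_i}\, dQ_\lambda^{\otimes n} \;\le\; \bigl(P(e^{\lambda X})\bigr)^n e^{-\lambda n\alpha},
\]
using $\lambda \ge 0$ and $\sum_i x_i \ge n\alpha$ on $A_n$. Taking logarithms, dividing by $n$, and supremizing over $\lambda \ge 0$ produces $-\tfrac{1}{n}\log p_n \ge \gamma_+^*(\alpha) = \gamma_+(\alpha)$.

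For the \emph{upper bound on the exponent} (i.e.\ lower bound on $p_n$) I would bring in the entropy method. Pick any $Q$ with $\mathbb{E}_Q[X] > \alpha$ and $D(Q\|P) < \infty$. The weak law of large numbers under $Q^{\otimes n}$ gives $q_n := Q^{\otimes n}(A_n) \to 1$. Applying the data processing inequality to the binary partition $\{A_n, A_n^c\}$ yields
\[
n D(Q\|P) \;=\; D(Q^{\otimes n}\|P^{\otimes n}) \;\ge\; d\bigl(q_n \,\big\|\, p_n\bigr),
\]
where $d(\cdot\|\cdot)$ is the binary relative entropy. Using the elementary bound $d(q\|p) \ge q\log(1/p) - \log 2$, dividing by $n$, and letting $n\to\infty$ (so that $q_n \to 1$) yields $\limsup_n -\tfrac{1}{n}\log p_n \le D(Q\|P)$. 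Infimizing over admissible $Q$ and invoking the right-continuity of $\alpha \mapsto \gamma_+(\alpha)$ (which is convex on its effective domain, hence continuous in its interior) shows $\limsup_n -\tfrac{1}{n}\log p_n \le \gamma_+(\alpha)$.

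The main obstacle I foresee is the \emph{boundary behavior} of $\gamma_+$, specifically the step $\inf_{Q:\,\mathbb{E}_Q[X] > \alpha} D(Q\|P) \le \gamma_+(\alpha)$. When $\alpha \le \mathbb{E}_P[X]$, both sides vanish and the statement is trivial; when $\alpha$ lies in the interior of the effective domain of $\gamma_+$, convexity gives right-continuity for free. The delicate case is when $\alpha$ sits at the right endpoint of the effective domain or at the essential supremum of $X$ under $P$; there one must argue by approximation (truncating or perturbing $Q$) and invoke the duality in Theorem \ref{thm:duality-I-projection-1} to verify the identification. A secondary technicality is checking that $\mathbb{E}_Q|X| < \infty$ for the $Q$'s used in the LLN step, which can be arranged by restricting attention to finitely supported perturbations of $P$ (whose divergence and mean can both be controlled) before taking infima.
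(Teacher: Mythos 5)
Your proposal is correct, but its first half takes a genuinely different route from the paper. For the lower bound on the exponent (upper bound on $P^{\otimes n}(A_n)$) you use the classical Chernoff/exponential-tilting bound $P^{\otimes n}(A_n)\le \bigl(P(e^{\lambda X})\bigr)^{n}e^{-\lambda n\alpha}$ followed by the duality $\gamma_+^{*}(\alpha)=\gamma_+(\alpha)$ of Theorem \ref{thm:duality-I-projection-1}; this is exactly the ``dual perspective'' that the paper explicitly sets out to avoid. The paper instead argues primally: it sets $Q_{X^{n}}:=P^{\otimes n}(\cdot|A_{n})$, writes $-\frac{1}{n}\log P^{\otimes n}(A_{n})=\frac{1}{n}D(Q_{X^{n}}\|P^{\otimes n})$, tensorizes via superadditivity and a uniform random index $K$ to get $\ge D(Q_{X_K}\|P)$, and observes that concentration on $A_n$ forces $\mathbb{E}_{Q}[X_K]\ge\alpha$, yielding $\ge\gamma_+(\alpha)$ with no moment-generating functions and no appeal to the duality theorem at all. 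Your route is shorter if one grants Theorem \ref{thm:duality-I-projection-1} (whose proof is the long technical appendix), whereas the paper's route is self-contained and illustrates the entropy method, which is the point of the exposition. Your second half --- tilting to $Q$ with $\mathbb{E}_Q[X]>\alpha$, the LLN giving $q_n\to1$, and the data-processing inequality $nD(Q\|P)\ge d(q_n\|p_n)\ge q_n\log(1/p_n)-\log 2$ --- is essentially the paper's argument in compressed form (the paper writes out the conditional decomposition $nD(Q\|P)=p_nD(R_{X^n}\|P^{\otimes n})+(1-p_n)D(\bar R_{X^n}\|P^{\otimes n})-H_2(p_n)$, which is the same inequality).

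One small point on the boundary case you flag as the main obstacle: the paper disposes of $\alpha=\alpha_{\max}=\esssup_P X$ in one line, noting $\gamma_+(\alpha_{\max})=-\log P\{\alpha_{\max}\}$ and $P^{\otimes n}(A_n)\ge P\{\alpha_{\max}\}^{n}$, so no approximation or truncation argument is needed there. Your plan correctly identifies the issue but proposes a more roundabout resolution; adopting the paper's direct bound would close that remaining loose end.
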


Here $\gamma$ is known as the \emph{rate function}, which admits
the following dual formula: For any $\alpha$, 
\begin{align*}
\gamma_{+}(\alpha) & =\gamma_{+}^{*}(\alpha):=\sup_{\lambda\ge0}\lambda\alpha-\log\mathbb{E}_{P}[e^{\lambda X}].
\end{align*}
Denote $\alpha_{\max}:=\sup\{\alpha:\gamma_{+}(\alpha)<+\infty\}$.
Since $\gamma_{+}$ is convex, nonnegative, and nondecreasing on $\mathbb{R}$,
it holds that $\gamma_{+}(\alpha)$ is continuous in $\alpha<\alpha_{\max}$.
It is easy to see that $\alpha_{\max}=\esssup_{P}X$ and $\gamma_{+}(\alpha_{\max})=-\log P\{\alpha_{\max}\}.$
\begin{proof}
Proof of ``$\ge$'': To this end, we can denote $A_{n}:=\left\{ x^{n}:\frac{1}{n}\sum_{i=1}^{n}x_{i}\ge\alpha\right\} $.
Then, the probability we are going to estimate is $P^{\otimes n}(A_{n})$.
Define an auxiliary probability measure $Q_{X^{n}}:=P^{\otimes n}(\cdot|A_{n})$.
It is easily verified that 
\begin{align}
-\frac{1}{n}\log P^{\otimes n}(A_{n}) & =\frac{1}{n}D(Q_{X^{n}}\|P^{\otimes n})\nonumber \\
 & \ge\frac{1}{n}\sum_{i=1}^{n}D(Q_{X_{i}}\|P)\nonumber \\
 & =D(Q_{X_{K}|K}\|P|Q_{K})\nonumber \\
 & \ge D(Q_{X}\|P),\label{eq:-1}
\end{align}
where the first inequality follows by the superadditivity of relative
entropy, the auxiliary random variable $K\sim Q_{K}:=\Unif[n]$ denotes
a random time index independent of $X^{n}$ (under both $P,Q$), and
in the last line, $X:=X_{K}$. On the other hand, since $Q_{X^{n}}$
is concentrated on $A_{n}$, we have $\mathbb{E}_{Q}\left[\frac{1}{n}\sum_{i=1}^{n}X_{i}\right]\ge\alpha$,
i.e., $\mathbb{E}_{Q}\left[X\right]\ge\alpha$. We hence have 
\begin{align*}
-\frac{1}{n}\log P^{\otimes n}(A_{n}) & \geq\gamma_{+}(\alpha).
\end{align*}

Proof of ``$\le$'': It is verified that 
\begin{align}
-\frac{1}{n}\log P^{\otimes n}(A_{n}) & =\inf_{Q_{X^{n}}:Q_{X^{n}}(A_{n})=1}\frac{1}{n}D(Q_{X^{n}}\|P^{\otimes n}),\label{eq:-39}
\end{align}
where the optimal $Q_{X^{n}}$ attaining the infimum is $P^{\otimes n}(\cdot|A_{n})$
since for any $Q_{X^{n}}$ concentrated on $A_{n}$, 
\[
D(Q_{X^{n}}\|P^{\otimes n})=D(Q_{X^{n}}\|P^{\otimes n}(\cdot|A_{n}))-\frac{1}{n}\log P^{\otimes n}(A_{n}).
\]

Given any $Q_{X}$ such that $\mathbb{E}_{Q}\left[X\right]>\alpha$,
denote auxiliary probability measures $R_{X^{n}}:=Q_{X}^{\otimes n}(\cdot|A_{n})$
and $\bar{R}_{X^{n}}:=Q_{X}^{\otimes n}(\cdot|A_{n}^{c})$. Denote
$p_{n}:=Q_{X}^{\otimes n}(A_{n})$, which converges to $1$ by the
law of large numbers (LLN). Furthermore, 
\begin{align*}
nD(Q_{X}\|P) & =p_{n}D(R_{X^{n}}\|P^{\otimes n})+(1-p_{n})D(\bar{R}_{X^{n}}\|P^{\otimes n})-H_{2}(p_{n})\\
 & \ge p_{n}D(R_{X^{n}}\|P^{\otimes n})-H_{2}(p_{n}),
\end{align*}
where $H_{2}:t\in[0,1]\mapsto-t\log_{2}t-(1-t)\log_{2}(1-t)$ is the
binary entropy function and the equality above can be easily verified
by definition. That is, 
\[
D(R_{X^{n}}\|P^{\otimes n})\leq\frac{nD(Q_{X}\|P)+H_{2}(p_{n})}{p_{n}}.
\]
Since $R_{X^{n}}$ is a feasible solution to the infimization in \eqref{eq:-39},
\begin{align*}
\limsup_{n\to\infty}-\frac{1}{n}\log P^{\otimes n}(A_{n}) & \le\limsup_{n\to\infty}\frac{nD(Q_{X}\|P)+H_{2}(p_{n})}{np_{n}}\\
 & =D(Q_{X}\|P).
\end{align*}
Since $Q_{X}$ satisfying $\mathbb{E}_{Q}\left[X\right]>\alpha$ is
arbitrary, 
\begin{align*}
\limsup_{n\to\infty}-\frac{1}{n}\log P^{\otimes n}(A_{n}) & \le\inf_{Q_{X}:\mathbb{E}_{Q}\left[X\right]>\alpha}D(Q_{X}\|P)\\
 & \le\lim_{\delta\downarrow0}\gamma_{+}(\alpha+\delta).
\end{align*}
By the continuity, for $\alpha\neq\alpha_{\max}$, $\limsup_{n\to\infty}-\frac{1}{n}\log P^{\otimes n}(A_{n})\le\gamma_{+}(\alpha)$,
and hence $\lim_{n\to\infty}-\frac{1}{n}\log P^{\otimes n}(A_{n})=\gamma_{+}(\alpha)$.

For $\alpha=\alpha_{\max}$, $\gamma_{+}(\alpha_{\max})=-\log P_{X}\{\alpha_{\max}\}$.
For this case, $P^{\otimes n}(A_{n})\ge P\{\alpha_{\max}\}^{n}$,
and hence $\lim_{n\to\infty}-\frac{1}{n}\log P^{\otimes n}(A_{n})=\gamma_{+}(\alpha)$
still holds. 
\end{proof}
In fact, common proofs for Cramer's theorem, e.g., the one given in
\cite{Dembo}, are from the dual perspective, for which the resultant
rate function is expressed by the Fenchel--Legendre transform of
the logarithmic moment generating function. However, the entropy proof
here is from the primal perspective, for which the resultant rate
function is characterized by an information projection problem (the
minimization of the relative entropy over a convex set). Furthermore,
the entropy proof here is short, since we do not need to divide the
proof into many cases, e.g., whether the expectation of $X_{i}$ exists.
The entropy proof neither requires the technique of change of measure,
since the auxiliary probability measure $Q_{X}$ in the proof of ``$\le$''
is chosen independently of the original probability measure $P^{\otimes n}$,
which plays the role of changing of measure. 

Generally speaking, the entropy method  typically consists of three
steps: 
\begin{enumerate}
\item First, introduce auxiliary probability measures (or auxiliary random
variables), 
\item Then, express the problem in terms of relative entropies of these
auxiliary probability measures, 
\item Lastly, derive bounds by using properties of relative entropies. 
\end{enumerate}
Moreover, for the first step, in a probability measure space, if the
extreme problem that we consider is about sets, then the auxiliary
measures are usually defined as conditional probability measures given
these sets; if the extreme problem is about nonnegative integrable
functions, then the auxiliary measures are usually defined as probability
measures with densities proportional to these functions (or their
variants). The first step is unnecessary if the probability measures
that we want are already given in the problem. 

The example above illustrates the power of the entropy method, and
this example is not an exceptional case. In fact, this method is simple,
general, and powerful in the sense that it does not only apply to
many probabilistic, combinatorial, and functional-analytic problems,
but also works for general probability measure spaces and usually
yields exponentially tight bounds.

\subsection{\label{sec:Cramer's-theorem}Cramer's Theorem (General Version)}

The simple version of Cramer's theorem was proven in the previous
section. We now prove the general version of this theorem.

Let $\mathcal{X}=\mathbb{R}$. Consider a random vector $X^{n}$,
consisting of i.i.d. real-valued random variables $X_{i}\sim P,i\in[n]$.
Let $\mu_{n}$ denote the law of $\hat{S}_{n}=\frac{1}{n}\sum_{i=1}^{n}X_{i}$.
\begin{defn}
A sequence of probability measures $\{\nu_{n}\}$ on $(\mathcal{X},\mathcal{B})$
is said to satisfy the \emph{large deviation principle} with a rate
function $I$ if, for all $\Gamma\in\mathcal{B}$, 
\begin{equation}
\inf_{x\in\overline{\Gamma}}I(x)\le\liminf_{n\to\infty}-\frac{1}{n}\log\nu_{n}(\Gamma)\le\limsup_{n\to\infty}-\frac{1}{n}\log\nu_{n}(\Gamma)\le\inf_{x\in\Gamma^{o}}I(x).\label{eq:-93}
\end{equation}
\end{defn}

Define 
\begin{align}
\gamma(\alpha) & :=\inf_{Q:\mathbb{E}_{Q}\left[X\right]=\alpha}D(Q\|P)\label{eq:gamma}\\
 & =\sup_{\lambda\in\mathbb{R}}\lambda\alpha-\log\mathbb{E}_{P}[e^{\lambda X}],\nonumber 
\end{align}
where the duality follows by Theorem \ref{thm:duality-I-projection}.
\begin{thm}[Cramer's Theorem]
\cite[Theorem 2.2.3]{Dembo} \label{thm:Cramer-1} The sequence of
measures $\{\mu_{n}\}$ satisfies the LDP with the convex rate function
$\gamma$, namely: \\
 (a) For any closed set $F\subseteq\mathbb{R}$, 
\[
\liminf_{n\to\infty}-\frac{1}{n}\log\mu_{n}(F)\ge\inf_{x\in F}\gamma(x).
\]
(b) For any open set $G\subseteq\mathbb{R}$, 
\[
\limsup_{n\to\infty}-\frac{1}{n}\log\mu_{n}(G)\le\inf_{x\in G}\gamma(x).
\]
\end{thm}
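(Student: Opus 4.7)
The approach is to mirror the entropy proof of Theorem~\ref{thm:Cramer} from Section~\ref{sec:Cramer's-Theorem-(Simple}: part~(a) will follow from the simple one-sided version by a convexity-based decomposition of the closed set into two half-lines, and part~(b) will follow by repeating the ``$\le$'' step of that proof with a small two-sided interval in place of the one-sided event $A_n$.

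\textbf{Part (a).} The rate function $\gamma$ is convex and nonnegative, so there exists $\bar{x}\in[-\infty,+\infty]$ at which $\gamma$ attains its infimum on $\mathbb{R}$, and $\gamma$ is non-increasing on $(-\infty,\bar{x}]$ and non-decreasing on $[\bar{x},+\infty)$. For a closed set $F\subseteq\mathbb{R}$, I would set
\[
\alpha_{+}:=\inf\{x\in F:x\geq\bar{x}\},\qquad \alpha_{-}:=\sup\{x\in F:x\leq\bar{x}\},
\]
with the conventions $\inf\emptyset=+\infty$, $\sup\emptyset=-\infty$, $\gamma(\pm\infty):=+\infty$. Closedness of $F$ ensures $\alpha_{\pm}\in F$ whenever they are finite, and monotonicity gives $\inf_{x\in F}\gamma(x)=\gamma(\alpha_{+})\wedge\gamma(\alpha_{-})$. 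Since $F\subseteq(-\infty,\alpha_{-}]\cup[\alpha_{+},+\infty)$,
\[
\mu_n(F)\;\leq\;\mu_n([\alpha_{+},+\infty))+\mu_n((-\infty,\alpha_{-}]),
\]
and I would invoke Theorem~\ref{thm:Cramer} on $(X_i)$ for the first term and on $(-X_i)$ for the second (using that the corresponding $\gamma_{+}^{(-X)}(-\alpha_-)$ equals $\gamma(\alpha_-)$ because $\gamma$ is non-increasing on $(-\infty,\bar{x}]$). The elementary bound $-\tfrac{1}{n}\log(a_n+b_n)\geq\min\bigl\{-\tfrac{1}{n}\log a_n,\,-\tfrac{1}{n}\log b_n\bigr\}-\tfrac{\log 2}{n}$ then finishes part~(a).

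\textbf{Part (b).} It suffices to show $\limsup_{n\to\infty}-\tfrac{1}{n}\log\mu_{n}(G)\leq\gamma(x)$ for each fixed $x\in G$, and we may assume $\gamma(x)<\infty$. Choose $\delta>0$ with $(x-\delta,x+\delta)\subseteq G$ and, for arbitrary $\epsilon>0$, pick $Q_X$ with $\mathbb{E}_{Q_X}[X]=x$ and $D(Q_X\|P)\leq\gamma(x)+\epsilon$; such $Q_X$ has $\mathbb{E}_{Q_X}[|X|]<\infty$ because the finite signed mean $x\in\mathbb{R}$ forces the positive and negative parts of $X$ to be $Q_X$-integrable. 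Then, following the ``$\le$'' step of Theorem~\ref{thm:Cramer} verbatim with the two-sided event
\[
B_n\;:=\;\bigl\{x^n\in\mathbb{R}^n:\hat{S}_n(x^n)\in(x-\delta,x+\delta)\bigr\}
\]
replacing the half-line $A_n$, set $R_{X^n}:=Q_X^{\otimes n}(\cdot\mid B_n)$ and $p_n:=Q_X^{\otimes n}(B_n)\to 1$ by the weak LLN under $Q_X$, and use the decomposition
\[
n\,D(Q_X\|P)\;=\;p_n\,D(R_{X^n}\|P^{\otimes n})\,+\,(1-p_n)\,D(\bar{R}_{X^n}\|P^{\otimes n})\,-\,H_2(p_n)
\]
together with $D(R_{X^n}\|P^{\otimes n})\geq-\log P^{\otimes n}(B_n)$ to deduce
\[
\limsup_{n\to\infty}-\tfrac{1}{n}\log\mu_n\bigl((x-\delta,x+\delta)\bigr)\;\leq\;D(Q_X\|P)\;\leq\;\gamma(x)+\epsilon.
\]
Sending $\epsilon\downarrow 0$ and taking the infimum over $x\in G$ completes part~(b).

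\textbf{Expected difficulty.} The key subtlety is ensuring that a near-minimizer of the \emph{equality}-constrained infimum in the definition of $\gamma(x)$ simultaneously obeys the law of large numbers needed for $p_n\to 1$; this turns out to be automatic because $\mathbb{E}_{Q_X}[X]=x\in\mathbb{R}$ already entails $\mathbb{E}_{Q_X}[|X|]<\infty$, so no tilting or truncation argument is needed. A minor technicality is the choice of $\bar{x}$ in part~(a) when $P$ has no finite mean (e.g., a Cauchy law); the conventions $\bar{x}\in\{\pm\infty\}$ and $\gamma(\pm\infty)=+\infty$ make one of the half-lines empty and the decomposition degenerate harmlessly.
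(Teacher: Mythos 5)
Your proposal is correct and follows essentially the same route as the paper: part (a) uses the convexity/monotonicity of $\gamma$ to reduce a closed set to two half-lines handled by the one-sided Theorem~\ref{thm:Cramer} (the paper invokes its proof for the left tail where you apply it to $-X_i$, a cosmetic difference), and part (b) reruns the ``$\le$'' step of that proof with the conditional measure $Q_X^{\otimes n}(\cdot\mid B_n)$ and the LLN. Your explicit remarks — shrinking to $(x-\delta,x+\delta)\subseteq G$ before applying the LLN, and noting that $\mathbb{E}_{Q_X}[X]=x\in\mathbb{R}$ already guarantees the integrability needed for it — are slightly more careful than the paper's wording but not a different argument.
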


\begin{proof}
Proof of (a): We extend the domain of $\gamma$ to $\bar{\mathbb{R}}=\mathbb{R}\cup\{\pm\infty\}$
by the continuous extension. Let $\alpha^{*}\in\bar{\mathbb{R}}$
be the minimum point of $\gamma$, i.e., $\gamma(\alpha^{*})\le\gamma(\alpha),\forall\alpha\in\bar{\mathbb{R}}$.
For a set $F\subseteq\mathbb{R}$, let $F_{1}:=F\cap(-\infty,\alpha^{*})$
and $F_{2}:=F\cap(\alpha^{*},+\infty)$. Let $\alpha_{1},\alpha_{2}\in\bar{\mathbb{R}}$
be such that $\alpha_{1}=\sup F_{1}$ and $\alpha_{2}=\inf F_{2}$.
Define 
\begin{align*}
\gamma_{-}(\alpha) & :=\inf_{Q:\mathbb{E}_{Q}\left[X\right]\le\alpha}D(Q\|P)\\
\gamma_{+}(\alpha) & :=\inf_{Q:\mathbb{E}_{Q}\left[X\right]\ge\alpha}D(Q\|P).
\end{align*}
Observe that $\gamma$ is convex, and $\gamma_{-}(\alpha)=\inf_{x\le\alpha}\gamma(x)$
and $\gamma_{+}(\alpha)=\inf_{x\ge\alpha}\gamma(x)$. So, 
\begin{align*}
\gamma_{-}(\alpha) & =\begin{cases}
\gamma(\alpha) & \alpha\le\alpha^{*}\\
\gamma(\alpha^{*}) & \alpha>\alpha^{*}
\end{cases},\\
\gamma_{+}(\alpha) & =\begin{cases}
\gamma(\alpha) & \alpha\ge\alpha^{*}\\
\gamma(\alpha^{*}) & \alpha<\alpha^{*}
\end{cases}.
\end{align*}

Then, 
\begin{align*}
-\frac{1}{n}\log\mu_{n}(F) & \ge-\frac{1}{n}\log\left(\mu_{n}((-\infty,\alpha_{1}])+\mu_{n}([\alpha_{2},\infty))\right)\\
 & \ge\min\{\gamma_{-}(\alpha_{1}),\gamma_{+}(\alpha_{2})\}-\epsilon_{n}\\
 & =\min\{\gamma(\alpha_{1}),\gamma(\alpha_{2})\}-\epsilon_{n}\\
 & \ge\inf_{x\in F}\gamma(x)-\epsilon_{n},
\end{align*}
where $\epsilon_{n}=\frac{1}{n}\log2$, and the second inequality
follows by Theorem \ref{thm:Cramer} (more specifically, the proof
of Theorem \ref{thm:Cramer}).

Proof of (b): For any open set $G\subseteq\mathbb{R}$, let $A_{n}=\{x^{n}:\frac{1}{n}\sum_{i=1}^{n}x_{i}\in G\}$.
Given $Q$ such that $\mathbb{E}_{Q}\left[X\right]\in G$, denote
$R_{X^{n}}:=Q^{\otimes n}(\cdot|A_{n})$ and $\bar{R}_{X^{n}}:=Q^{\otimes n}(\cdot|A_{n}^{c})$.
Denote $p_{n}:=Q^{\otimes n}(A_{n})$, which converges to $1$ by
the law of large numbers (LLN). Furthermore, following steps similar
to the proof of Theorem \ref{thm:Cramer}, it holds that 
\begin{align*}
-\frac{1}{n}\log P^{\otimes n}(A_{n}) & \le\inf_{Q:\mathbb{E}_{Q}\left[X\right]\in G}\frac{nD(Q\|P)+H_{2}(p_{n})}{np_{n}}\\
 & \to\inf_{Q:\mathbb{E}_{Q}\left[X\right]\in G}D(Q\|P)\\
 & =\inf_{x\in G}\gamma(x).
\end{align*}
\end{proof}

\subsection{\label{sec:Cramer's-Theorem-Rd}Cramer's Theorem for $\mathbb{R}^{d}$ }

We now consider $\mathcal{X}=\mathbb{R}^{d}$. Consider a random sequence
$X^{n}$, consisting of i.i.d. random vectors $X_{i}\sim P,i\in[n]$.
Let $\mu_{n}$ denote the law of $\hat{S}_{n}=\frac{1}{n}\sum_{i=1}^{n}X_{i}$.
We next extend the general version of Cramer's theorem to this case.

Denote $\Lambda(\lambda):=\log\mathbb{E}_{P}[e^{\left\langle \lambda,X\right\rangle }]$.
Denote its effective domain $\mathcal{D}_{\Lambda}:=\left\{ \lambda\in\mathbb{R}^{d}:\Lambda(\lambda)<+\infty\right\} $.
For simplicity, suppose $\mathcal{D}_{\Lambda}=\mathbb{R}^{d}$. So,
$\Lambda(be_{i})\to+\infty$ as $b\to\infty$, where $e_{i}$ is a
$d$-length vector with $1$ as the $i$th coordinate and $0$'s as
other coordinates. 

Define for $\alpha\in\mathbb{R}^{d}$, 
\begin{align}
\gamma(\alpha) & :=\inf_{Q:\mathbb{E}_{Q}\left[X\right]=\alpha}D(Q\|P)\nonumber \\
 & =\sup_{\lambda\in\mathbb{R}^{d}}\left\langle \lambda,\alpha\right\rangle -\Lambda(\lambda).\label{eq:-3}
\end{align}

\begin{thm}[Cramer's Theorem for $\mathbb{R}^{d}$]
\cite[Theorem 2.2.30]{Dembo} \label{thm:Cramer-1-1} The sequence
of measures $\{\mu_{n}\}$ satisfies the LDP with the convex rate
function $\gamma$, namely: \\
 (a) For any closed set $F\subseteq\mathbb{R}^{d}$, 
\[
\liminf_{n\to\infty}-\frac{1}{n}\log\mu_{n}(F)\ge\inf_{x\in F}\gamma(x).
\]
(b) For any open set $G\subseteq\mathbb{R}^{d}$, 
\[
\limsup_{n\to\infty}-\frac{1}{n}\log\mu_{n}(G)\le\inf_{x\in G}\gamma(x).
\]
\end{thm}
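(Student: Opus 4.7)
The plan is to follow the structure of Theorem~\ref{thm:Cramer-1} whenever possible and introduce new ingredients only for the closed-set half, since the trick of splitting $F\subseteq\mathbb{R}$ into $F_1$ and $F_2$ about the minimizer of $\gamma$ has no clean $d$-dimensional analogue. The open-set bound will use the entropy method essentially verbatim, whereas the closed-set bound will require exponential tightness together with a covering by Chernoff half-spaces drawn from the dual formula \eqref{eq:-3}.

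For part (b), pick $x\in G$ and $Q$ with $\mathbb{E}_Q[X]=x$ and $D(Q\|P)\le\gamma(x)+\epsilon$; set $A_n:=\{x^n:\frac{1}{n}\sum_i x_i\in G\}$, $R_{X^n}:=Q^{\otimes n}(\cdot|A_n)$, and $p_n:=Q^{\otimes n}(A_n)$. Because $\mathcal{D}_{\Lambda}=\mathbb{R}^d$, every such $Q$ has a finite mean and the multivariate LLN gives $p_n\to 1$. The same relative-entropy decomposition used in Theorem~\ref{thm:Cramer-1} yields
\[
D(R_{X^n}\|P^{\otimes n})\le\frac{nD(Q\|P)+H_2(p_n)}{p_n},
\]
and since $R_{X^n}$ is feasible for the variational expression of $-\frac{1}{n}\log\mu_n(G)$, we conclude $\limsup_n-\frac{1}{n}\log\mu_n(G)\le D(Q\|P)$. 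Sending $\epsilon\downarrow 0$ and infimizing over $x\in G$ completes (b).

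For part (a) I would proceed in three stages. First, establish \emph{exponential tightness}: since $\Lambda$ is finite everywhere, Chernoff applied to $\pm b\, e_i$ for suitable $b$ yields, for each $L>0$, a compact box $K_L\subseteq\mathbb{R}^d$ with $\mu_n(K_L^c)\le e^{-nL}$ for all large $n$. Thus it suffices to control $\mu_n(F\cap K_L)$. Second, for each $x\in F\cap K_L$, use the dual formula \eqref{eq:-3} to pick $\lambda_x\in\mathbb{R}^d$ with $\langle\lambda_x,x\rangle-\Lambda(\lambda_x)\ge\min(\gamma(x),L)-\delta$, and form the open half-space $U_x:=\{y:\langle\lambda_x,y\rangle>\langle\lambda_x,x\rangle-\delta\}$, which contains $x$. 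A one-dimensional Chernoff bound on the i.i.d.\ real variables $\langle\lambda_x,X_i\rangle$ gives
\[
\mu_n(U_x)\le e^{-n(\langle\lambda_x,x\rangle-\delta-\Lambda(\lambda_x))}\le e^{-n(\min(\gamma(x),L)-2\delta)}.
\]
Third, compactness of $F\cap K_L$ extracts a finite subcover $F\cap K_L\subseteq\bigcup_{i=1}^N U_{x_i}$, so
\[
\mu_n(F)\le\mu_n(K_L^c)+\sum_{i=1}^N\mu_n(U_{x_i})\le e^{-nL}+N\,e^{-n(\min(\inf_{x\in F}\gamma(x),L)-2\delta)},
\]
and letting $L\to\infty$ followed by $\delta\downarrow 0$ delivers (a).

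The main obstacle is the geometry in part (a): one must convert the pointwise dual representation of $\gamma$ into a uniform exponential bound over a full neighborhood of each $x$, while simultaneously accommodating points where $\gamma(x)=\infty$. Both difficulties are dissolved by the hypothesis $\mathcal{D}_{\Lambda}=\mathbb{R}^d$, which furnishes the exponential tightness and allows $\lambda_x$ to be chosen making $\langle\lambda_x,x\rangle-\Lambda(\lambda_x)$ arbitrarily large. Relaxing this full-domain hypothesis would force a steepness assumption on $\Lambda$ and a more delicate convex-analytic separation argument to handle boundary points of $\mathcal{D}_{\Lambda}$.
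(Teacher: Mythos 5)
Your proposal is correct, and part (b) follows the paper's proof essentially verbatim (tilt to a $Q$ with mean in $G$, condition on $A_n$, use the relative-entropy decomposition with $p_n\to1$; the paper proves exponential concentration via Lemma \ref{lem:-exponentially-fast-1}, but as you note the plain LLN suffices for the upper bound). Part (a), however, takes a genuinely different route. The paper stays with the primal entropy method: for a closed \emph{ball} $F$ it sets $Q_{X^n}:=P^{\otimes n}(\cdot|A_n)$, applies superadditivity of relative entropy to get $-\frac{1}{n}\log P^{\otimes n}(A_n)\ge D(Q_X\|P)$, and uses Jensen's inequality together with the convexity of the ball to conclude $\mathbb{E}_Q[X]\in F$, hence the bound $\inf_{x\in F}\gamma(x)$; it then covers compact sets by finitely many small balls, invoking the lower semicontinuity of $\gamma$ (read off from \eqref{eq:-3}) to recover the infimum over $F$, and finally passes to closed sets by exponential tightness (Lemma \ref{lem:For-each-,-1}). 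You instead run the classical Chernoff argument: near-optimal $\lambda_x$ from the dual formula, open half-spaces $U_x$, a one-dimensional exponential Markov bound on $\langle\lambda_x,X_i\rangle$, and a finite subcover of $F\cap K_L$. Your version is clean and handles $\gamma(x)=+\infty$ gracefully through the truncation $\min(\gamma(x),L)$, but it is exactly the ``dual perspective'' proof (Dembo--Zeitouni) that the paper is deliberately avoiding: it never touches a relative entropy and so does not illustrate the entropy method, which is the point of the exposition. The paper's version keeps the information-projection structure throughout, at the cost of needing the convexity of balls (via Jensen) and the lower semicontinuity of the rate function to glue the local bounds together; both versions ultimately rely on the same exponential-tightness lemma and a compactness/finite-covering step.
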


\begin{defn}
A family of probability measures $\mu_{n}$ on $\mathcal{X}$ is
\emph{exponentially tight} if for every  $b\in(0,\infty)$, there
is a compact set $\mathcal{K}_{b}\subseteq\mathcal{P}(\mathcal{X})$
such that 
\begin{equation}
\liminf_{n\to\infty}-\frac{1}{n}\log\mu_{n}(\mathcal{K}_{b}^{c})\ge b.\label{eq:-94-2}
\end{equation}
\end{defn}

\begin{proof}[Proof of Theorem \ref{thm:Cramer-1-1}]
 Proof of (a): Suppose $F$ is a closed ball which is obviously convex.
Then, we denote $A_{n}:=\left\{ x^{n}:\frac{1}{n}\sum_{i=1}^{n}x_{i}\in F\right\} $.
Define $Q_{X^{n}}:=P^{\otimes n}(\cdot|A_{n})$. Similarly to \eqref{eq:-1},
it holds that 
\begin{align*}
-\frac{1}{n}\log P^{\otimes n}(A_{n}) & \ge D(Q_{X}\|P),
\end{align*}
where $X:=X_{K}$ and the auxiliary random variable $K\sim Q_{K}:=\Unif[n]$
denotes a random time index independent of $X^{n}$ (under both $P,Q$).
On the other hand, since $Q_{X^{n}}$ is concentrated on $A_{n}$,
by Jensen's inequality, we have $\mathbb{E}_{Q}\left[\frac{1}{n}\sum_{i=1}^{n}X_{i}\right]\in F$,
i.e., $\mathbb{E}_{Q}\left[X\right]\in F$. We hence have 
\begin{align*}
-\frac{1}{n}\log P^{\otimes n}(A_{n}) & \geq\inf_{x\in F}\gamma(x).
\end{align*}

Since compact sets can be covered by an appropriate finite collection
of small enough balls, Statement (a) for compact sets follows by the
union of events bound and the lower semicontinuity of the rate function
(seen from \eqref{eq:-3}). We can also extend Statement (a) to closed
sets by the fact that $\mu_{n}$ is an exponentially tight family
of probability measures. 
\begin{lem}[Exponential Tightness]
\label{lem:For-each-,-1} \cite[pp. 38-39]{Dembo} 
\begin{equation}
\lim_{\rho\to\infty}\liminf_{n\to\infty}-\frac{1}{n}\log\mu_{n}(\mathbb{R}^{d}\backslash[-\rho,\rho]^{d})=+\infty.\label{eq:-94-1-1}
\end{equation}
In other words, $\mu_{n}$ is exponentially tight.
\end{lem}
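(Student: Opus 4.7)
The plan is to reduce the estimate to the one-dimensional half-space case by a union bound, and then control each half-space by a Chernoff-type bound that exploits the standing assumption $\mathcal{D}_{\Lambda}=\mathbb{R}^{d}$. Concretely, I would write
\[
\mathbb{R}^{d}\setminus[-\rho,\rho]^{d}=\bigcup_{i=1}^{d}\bigl(H_{i}^{+}(\rho)\cup H_{i}^{-}(\rho)\bigr),\qquad H_{i}^{\pm}(\rho):=\{x\in\mathbb{R}^{d}:\pm x_{i}>\rho\},
\]
apply the union bound to the $2d$ sets, and use the elementary inequality $-\frac{1}{n}\log\sum_{j=1}^{2d}a_{j}\ge\min_{j}\bigl(-\frac{1}{n}\log a_{j}\bigr)-\frac{\log(2d)}{n}$ to conclude that
\[
\liminf_{n\to\infty}-\frac{1}{n}\log\mu_{n}\bigl(\mathbb{R}^{d}\setminus[-\rho,\rho]^{d}\bigr)\ \ge\ \min_{i,\pm}\;\liminf_{n\to\infty}-\frac{1}{n}\log\mu_{n}\bigl(H_{i}^{\pm}(\rho)\bigr).
\]

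Next I would bound each half-space probability by the Chernoff bound. For any $\lambda>0$,
\[
\mu_{n}(H_{i}^{+}(\rho))=\mathbb{P}\Bigl\{\tfrac{1}{n}\sum_{j=1}^{n}X_{j}^{(i)}>\rho\Bigr\}\le e^{-n\lambda\rho}\,\mathbb{E}_{P}\bigl[e^{\lambda X^{(i)}}\bigr]^{n}=e^{-n(\lambda\rho-\Lambda(\lambda e_{i}))},
\]
and analogously $\mu_{n}(H_{i}^{-}(\rho))\le e^{-n(\lambda\rho-\Lambda(-\lambda e_{i}))}$ for any $\lambda>0$. Equivalently, one can derive the same bound by the entropy method: set $Q_{X^{n}}:=P^{\otimes n}(\,\cdot\mid H_{i}^{+}(\rho))$ and note that $H_{i}^{+}(\rho)$ is a half-space (hence convex), so Jensen's inequality gives $\mathbb{E}_{Q}[X^{(i)}]\ge\rho$, whence $-\frac{1}{n}\log\mu_{n}(H_{i}^{+}(\rho))\ge\inf_{Q:\mathbb{E}_{Q}[X^{(i)}]\ge\rho}D(Q\|P)$, which by Theorem~\ref{thm:duality-I-projection-1} equals $\sup_{\lambda\ge0}(\lambda\rho-\Lambda(\lambda e_{i}))$.

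Finally, the assumption $\mathcal{D}_{\Lambda}=\mathbb{R}^{d}$ guarantees $\Lambda(\pm\lambda e_{i})<+\infty$ for every $\lambda>0$ and every $i$. Fixing any $\lambda_{0}>0$ and taking $\rho\to\infty$ afterwards,
\[
\liminf_{n\to\infty}-\frac{1}{n}\log\mu_{n}\bigl(H_{i}^{\pm}(\rho)\bigr)\ \ge\ \lambda_{0}\rho-\Lambda(\pm\lambda_{0}e_{i})\ \xrightarrow[\rho\to\infty]{}\ +\infty.
\]
Combining with the union-bound reduction in the first paragraph yields the claimed exponential tightness.

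The argument is essentially routine once the decomposition is chosen; the only mildly delicate point is that the complement of the cube is nonconvex, so Jensen's inequality cannot be applied directly to $\hat{S}_{n}\in\mathbb{R}^{d}\setminus[-\rho,\rho]^{d}$. The remedy — splitting into $2d$ convex half-spaces before invoking the entropy/Chernoff bound — is what makes the finiteness assumption $\mathcal{D}_{\Lambda}=\mathbb{R}^{d}$ directly usable, and is the only step where some thought is needed.
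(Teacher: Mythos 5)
Your proof is correct. The paper gives no in-text proof of this lemma (it only cites Dembo--Zeitouni, pp.~38--39), and your argument --- splitting $\mathbb{R}^{d}\setminus[-\rho,\rho]^{d}$ into the $2d$ convex half-spaces, applying the union bound, and then a one-dimensional Chernoff bound made useful by the standing assumption $\mathcal{D}_{\Lambda}=\mathbb{R}^{d}$ --- is exactly the standard argument from that reference, with the rederivation of the Chernoff bound via conditioning, Jensen, and Theorem~\ref{thm:duality-I-projection-1} being a nice touch in the spirit of the paper's entropy method.
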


Proof of (b): Before proving (b), we first prove the following lemma.
 Here without loss of generality, we equip $\mathbb{R}^{d}$ with
the metric $\|x-y\|_{\infty}$.
\begin{lem}[Concentration on Balls]
\label{lem:-exponentially-fast-1} For any $Q$, 
\begin{align}
\liminf_{n\to\infty}-\frac{1}{n}\log Q^{\otimes n}\{x^{n}:\frac{1}{n}\sum_{i=1}^{n}x_{i}\in B_{\epsilon}(\alpha)^{c}\} & >0,\label{eq:-95-1-1-1}
\end{align}
where $\alpha:=\mathbb{E}_{Q}\left[X\right]$. In other words, $\mathbb{P}\{\frac{1}{n}\sum_{i=1}^{n}X_{i}\in B_{\epsilon}(\alpha)^{c}\}\to0$
exponentially fast as $n\to\infty$. 
\end{lem}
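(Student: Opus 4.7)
The plan is to decompose the $\ell^\infty$-ball complement coordinate-wise, apply the union bound to reduce the $d$-dimensional event to a finite collection of one-dimensional tail events, and then invoke the simple version of Cramer's theorem (Theorem \ref{thm:Cramer}) marginally. Writing $x_i = (x_{i,1}, \ldots, x_{i,d}) \in \mathbb{R}^d$ and using that $B_\epsilon(\alpha)$ is an $\ell^\infty$-ball,
\[
B_\epsilon(\alpha)^c \;=\; \bigcup_{k=1}^d \bigl( \{y \in \mathbb{R}^d : y_k \ge \alpha_k + \epsilon\} \cup \{y : y_k \le \alpha_k - \epsilon\} \bigr),
\]
so by the union bound
\[
Q^{\otimes n}\bigl\{ \tfrac{1}{n}\textstyle\sum_i x_i \in B_\epsilon(\alpha)^c \bigr\} \;\le\; \sum_{k=1}^d \sum_{\sigma \in \{+,-\}} Q^{\otimes n}\bigl\{ \sigma\bigl( \tfrac{1}{n}\textstyle\sum_i x_{i,k} - \alpha_k \bigr) \ge \epsilon \bigr\}.
\]
Since $\log(2d)/n \to 0$, after taking $-\tfrac{1}{n}\log$ and $\liminf$ it suffices to prove that each of the $2d$ one-dimensional exponents is strictly positive.

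For each coordinate $k$, the scalars $x_{1,k}, \ldots, x_{n,k}$ are i.i.d.\ under the marginal $Q^{(k)}$ with mean $\alpha_k$. Applying Theorem \ref{thm:Cramer} to $Q^{(k)}$ at the threshold $\alpha_k + \epsilon$ (and the sign-flipped version for the lower tail) yields
\[
\lim_{n \to \infty} -\tfrac{1}{n}\log Q^{\otimes n}\bigl\{ \tfrac{1}{n}\textstyle\sum_i x_{i,k} \ge \alpha_k + \epsilon \bigr\} \;=\; \gamma_+^{(k)}(\alpha_k + \epsilon),
\]
where $\gamma_+^{(k)}(y) := \inf \{ D(R \| Q^{(k)}) : \mathbb{E}_R[Z] \ge y \}$ (with $Z$ the identity on $\mathbb{R}$), and analogously $\gamma_-^{(k)}$. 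By Theorem \ref{thm:duality-I-projection-1}, this rate admits the dual representation
\[
\gamma_+^{(k)}(\alpha_k + \epsilon) \;=\; \sup_{\lambda \ge 0} \bigl[ \lambda(\alpha_k + \epsilon) - \Lambda_{Q^{(k)}}(\lambda) \bigr], \qquad \Lambda_{Q^{(k)}}(\lambda) := \log \mathbb{E}_{Q^{(k)}}[e^{\lambda Z}].
\]

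Finally, I argue $\gamma_+^{(k)}(\alpha_k + \epsilon) > 0$ via the dual objective. At $\lambda = 0$ the objective vanishes, while its right-derivative at $\lambda = 0^+$ equals $(\alpha_k + \epsilon) - \Lambda'_{Q^{(k)}}(0^+) = \epsilon > 0$, forcing the supremum to be strictly positive; the lower-tail case is identical. Combining with the union-bound reduction yields
\[
\liminf_{n \to \infty} -\tfrac{1}{n}\log Q^{\otimes n}\bigl\{ \tfrac{1}{n}\textstyle\sum_i x_i \in B_\epsilon(\alpha)^c \bigr\} \;\ge\; \min_{k,\sigma} \gamma_\sigma^{(k)}(\alpha_k + \sigma\epsilon) \;>\; 0.
\]
The main obstacle is the identification $\Lambda'_{Q^{(k)}}(0^+) = \alpha_k$, which implicitly demands $\Lambda_{Q^{(k)}}$ to be finite and differentiable in a right-neighborhood of $0$. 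Under the standing hypothesis $\mathcal{D}_\Lambda = \mathbb{R}^d$ for $P$ and the WLOG reduction to $Q$ with $D(Q \| P) < \infty$, this regularity is inherited by $Q^{(k)}$ via a Young-type inequality applied to the Radon--Nikodym derivative $\mathrm{d} Q / \mathrm{d} P$; an alternative is to truncate $Z$ at a large threshold and combine Hoeffding-type concentration on the bounded part with law-of-large-numbers control of the residual mass.
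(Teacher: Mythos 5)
Your reduction is exactly the paper's: bound the complement of the $\ell^\infty$-ball by a union of $2d$ one-dimensional tail events and invoke the scalar Cramer theorem (Theorem \ref{thm:Cramer}) coordinatewise. Where you go beyond the paper is in recognizing that the scalar theorem only identifies the exponent as $\gamma_{+}^{(k)}(\alpha_{k}+\epsilon)$, and that one must still argue this quantity is strictly positive; the paper's proof silently assumes this. Your dual argument for positivity is fine \emph{whenever} $\Lambda_{Q^{(k)}}$ is finite on some interval $[0,\lambda_{0})$ with $\lambda_{0}>0$, since then $\Lambda_{Q^{(k)}}'(0^{+})=\alpha_{k}$ and the objective has positive right-derivative at $\lambda=0$.

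The genuine gap is that neither of your proposed ways of securing this finiteness works, and without it the statement is simply false. Donsker--Varadhan (the ``Young-type'' duality) gives $\mathbb{E}_{Q}[\lambda X]\le D(Q\|P)+\log\mathbb{E}_{P}[e^{\lambda X}]$, i.e.\ it controls the \emph{first} moment of $Q$, not its exponential moments; for example, with $P$ standard Gaussian and $\mathrm{d}Q/\mathrm{d}x\propto e^{-\sqrt{|x|}}$ one has $D(Q\|P)<\infty$ and a finite mean, yet $\mathbb{E}_{Q}[e^{\lambda X}]=\infty$ for every $\lambda>0$, so $\gamma_{+}(\alpha+\epsilon)=\sup_{\lambda\ge0}[\lambda(\alpha+\epsilon)-\Lambda_{Q}(\lambda)]=0$ and the $\liminf$ in \eqref{eq:-95-1-1-1} equals $0$, not a positive number. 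The truncation alternative fails for the same reason: Hoeffding handles the bounded part, but the event that the residual $\frac{1}{n}\sum_{i}Z_{i}\,1_{\{|Z_{i}|>M\}}$ exceeds $\epsilon/2$ cannot be made exponentially small without exponential moments. So the lemma as stated (``for any $Q$'') requires the extra hypothesis $0\in\mathcal{D}_{\Lambda_{Q}}^{o}$ --- a defect shared by the paper's own proof. Note, however, that in its only use (the proof of part (b) of Theorem \ref{thm:Cramer-1-1}) all that is needed is $p_{n}\to1$, which follows from the weak law of large numbers for any $Q$ with finite mean; if you only need the application, you can bypass the exponential rate entirely.
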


This lemma is proven as follows. We have 
\begin{align*}
Q^{\otimes n}\{x^{n}:\frac{1}{n}\sum_{i=1}^{n}x_{i}\in B_{\epsilon}(\alpha)^{c}\} & \le Q^{\otimes n}\{x^{n}:\|\frac{1}{n}\sum_{i=1}^{n}x_{i}-\alpha\|_{\infty}\ge\epsilon\}\\
 & \le\sum_{j=1}^{d}Q_{X(j)}^{\otimes n}\{x^{n}(j):|\frac{1}{n}\sum_{i=1}^{n}x_{i}(j)-\alpha(j)|\ge\epsilon\}\\
 & \to0
\end{align*}
exponentially fast as $n\to\infty$, where $x(j)$ denotes the $j$-th
coordinate of $x$ and the convergence in the last line follows by
the simple version of Cramer's theorem in Theorem \ref{thm:Cramer}.
So, Lemma \ref{lem:-exponentially-fast-1} holds.

We turn back to proving (b). It suffices to assume that $G$ is open.
Denote $A_{n}:=\left\{ x^{n}:\frac{1}{n}\sum_{i=1}^{n}x_{i}\in G\right\} $
and choose $Q$ such that $\alpha:=\mathbb{E}_{Q}\left[X\right]\in G$.
Then, $B_{\epsilon}(\alpha)\subseteq G$ for sufficiently small $\epsilon>0$.
Denote $p_{n}:=Q^{\otimes n}(A_{n})$. Then, by Lemma \ref{lem:-exponentially-fast-1},
\begin{align*}
p_{n} & \ge Q^{\otimes n}\{x^{n}:\frac{1}{n}\sum_{i=1}^{n}x_{i}\in B_{\epsilon}(\alpha)\}\\
 & =1-Q^{\otimes n}\{x^{n}:\frac{1}{n}\sum_{i=1}^{n}x_{i}\in B_{\epsilon}(\alpha)^{c}\}\\
 & \to1\textrm{ as }n\to\infty.
\end{align*}

Following steps similar to those in the proof of Theorem \ref{thm:Cramer},
we have 
\begin{align*}
\limsup_{n\to\infty}-\frac{1}{n}\log P^{\otimes n}(A_{n}) & \le\inf_{Q_{X}:\mathbb{E}_{Q}\left[X\right]\in G}D(Q_{X}\|P)=\inf_{x\in G}\gamma(x).
\end{align*}
\end{proof}

\subsection{\label{sec:Strong-LD-and}Strong Cramer's Theorem }

The simple version of Cramer's theorem given in Section \ref{sec:Cramer's-Theorem-(Simple}
can be further strengthened. The resultant result is known as the
\emph{strong Cramer's theorem} \cite{Bah60}. We next present the
proof in \cite{Bah60} for the strong Cramer's theorem. Although the
proof is not new, the strong Cramer's theorem is rewritten in terms
of relative entropies and relative varentropies. 

Let $\mathcal{X}=\mathbb{R}$. Consider a random vector $X^{n}$,
consisting of i.i.d. real-valued random variables $X_{i}\sim P,i\in[n]$.
Suppose that $Q$ attains $\gamma(\alpha)$ defined in \eqref{eq:gamma}
which satisfies 
\[
\frac{\mathrm{d}Q}{\mathrm{d}P}(x)=\frac{e^{\lambda^{*}x}}{\mathbb{E}_{P}[e^{\lambda^{*}X}]}
\]
for some $\lambda^{*}\in\mathbb{R}$ such that $\mathbb{E}_{Q}\left[X\right]=\alpha$.
Then, we have 
\begin{align}
 & \mathbb{P}\{\sum_{i=1}^{n}X_{i}\ge n\alpha\}\nonumber \\
 & =\int\bone_{\{\sum_{i=1}^{n}x_{i}\ge n\alpha\}}\frac{\mathrm{d}P^{\otimes n}}{\mathrm{d}Q^{\otimes n}}\mathrm{d}Q^{\otimes n}\\
 & =e^{-nD(Q\|P)}\int\bone_{\{\sum_{i=1}^{n}x_{i}\ge n\alpha\}}e^{-\left(\log\frac{\mathrm{d}Q^{\otimes n}}{\mathrm{d}P^{\otimes n}}-nD(Q\|P)\right)}\mathrm{d}Q^{\otimes n}\nonumber \\
 & =e^{-nD(Q\|P)}\mathbb{E}\left[\bone_{\{\sum_{i=1}^{n}W_{i}\ge0\}}e^{-\sum_{i=1}^{n}W_{i}}\right],\label{eq:-15}
\end{align}
where $W_{i}:=\log\frac{\mathrm{d}Q}{\mathrm{d}P}(X_{i})-D(Q\|P)$
with i.i.d. $X_{i}\sim Q$. Note that $\mathbb{E}\left[W_{i}\right]=0$
and $\mathrm{Var}\left[W_{i}\right]=\mathrm{V}(Q\|P)=\lambda^{*2}\mathrm{Var}_{Q}\left[X\right]$.

To estimate the expectation in \eqref{eq:-15}, we need to use the
central limit theorems. Intuitively, $\frac{1}{\sqrt{n\mathrm{V}(Q\|P)}}\sum_{i=1}^{n}W_{i}$
asymptotically follows the standard normal distribution. By simply
replacing it with a standard normal random variable $Z$, we obtain
\[
\mathbb{E}\left[\bone_{\{Z\ge0\}}e^{-\sqrt{n\mathrm{V}(Q\|P)}\cdot Z}\right]\sim\frac{1}{\sqrt{2\pi n\mathrm{V}(Q\|P)}}.
\]
In fact, this intuition is true when $X_{i}\sim P$ are non-lattice.
When $X_{i}\sim P$ are lattice, an additional factor will appear
at the RHS. These claims, stated in the following lemma, can be proven
by using Berry--Ess\'een expansion; see e.g., \cite{Dembo}. 
\begin{lem}
\label{lem:expect} For i.i.d. $W_{i}$ with mean zero and variance
$V$, it holds that 
\[
\mathbb{E}\left[\bone_{\{\sum_{i=1}^{n}W_{i}\ge0\}}e^{-\sum_{i=1}^{n}W_{i}}\right]\sim\frac{c}{\sqrt{2\pi nV}},
\]
where $c=1$ if $W_{i}$ are non-lattice, and $c=\frac{d}{1-e^{-d}}$
if $W_{i}$ are lattice with maximal step\footnote{That is, for some $x_{0},d$, the random variable $d^{-1}(W_{1}-w_{0})$
is (a.s.) an integer number, and $d$ is the largest number with this
property.} $d$ and\footnote{The condition $0<\mathbb{P}(W_{1}=0)<1$ implies that $w_{0}/d$ is
an integer and that $V>0.$} $0<\mathbb{P}(W_{1}=0)<1$. 
\end{lem}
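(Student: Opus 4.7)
The plan is to rewrite $I_n := \mathbb{E}[\bone_{\{S_n \geq 0\}} e^{-S_n}]$, where $S_n := \sum_{i=1}^n W_i$ and $\sigma_n := \sqrt{nV}$, as a Laplace-type integral against the law $\mu_n$ of $S_n$, to apply the local central limit theorem at scale $O(1)$ (on which scale the ``density'' of $S_n$ is essentially constant, $\approx 1/\sqrt{2\pi nV}$), and to control the tail by the trivial bound $e^{-t} \le e^{-T}$ for $t \ge T$. Concretely,
\[
I_n \;=\; \int_{[0,\infty)} e^{-t}\, \mu_n(\mathrm{d}t),
\]
and for any $T > 0$ the tail satisfies $\int_{(T,\infty)} e^{-t}\,\mu_n(\mathrm{d}t) \le e^{-T}$ uniformly in $n$. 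The task therefore reduces to computing $\lim_{n\to\infty} \sigma_n \int_{[0,T]} e^{-t}\, \mu_n(\mathrm{d}t)$ and then letting $T \to \infty$.

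The essential input is a local central limit theorem, obtained via Berry--Ess\'een-type refinements of the characteristic function. In the non-lattice case, Stone's theorem asserts that for any Riemann-integrable $g$ with compact support,
\[
\sigma_n\, \mathbb{E}\!\left[g(S_n)\right] \longrightarrow \frac{1}{\sqrt{2\pi}} \int g(t)\,\mathrm{d}t \quad \text{as } n \to \infty,
\]
so applying this to $g(t) = e^{-t} \bone_{\{0 \le t \le T\}}$ and combining with the tail bound yields $\sigma_n I_n \to 1/\sqrt{2\pi}$ as $n \to \infty$ and then $T \to \infty$, i.e., $c = 1$. In the lattice case with maximal step $d$, Gnedenko's local limit theorem gives $\sigma_n\, \mathbb{P}(S_n = kd) \to d/\sqrt{2\pi}$ uniformly over $k$ with $|kd| \le T$; summing the resulting geometric series produces
\[
\sigma_n I_n \;\longrightarrow\; \frac{d}{\sqrt{2\pi}} \sum_{k \ge 0} e^{-kd} \;=\; \frac{d}{\sqrt{2\pi}\,(1-e^{-d})},
\]
that is, $c = d/(1-e^{-d})$.

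The main obstacle is calibrating the limit theorem at the right level of strength: the standard Berry--Ess\'een bound in Kolmogorov distance is \emph{not} sharp enough for us, because its $O(1/\sqrt{n})$ error is of the same order as the leading term $1/\sigma_n$ that we are trying to isolate. One genuinely needs a density-level (respectively mass-level) refinement, which forces the non-lattice hypothesis, or in the lattice case the maximal-step structure together with $0 < \mathbb{P}(W_1 = 0) < 1$, which ensures that $d$ is well-defined and that $V > 0$, so the CLT scaling is non-degenerate. With such a local CLT available, the truncation argument and the elementary decay of $e^{-t}$ close the argument and deliver the two announced constants.
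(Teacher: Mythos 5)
Your overall route --- truncation plus a local central limit theorem (Stone's theorem in the non-lattice case, Gnedenko's in the lattice case) --- is a legitimate and standard way to prove this lemma, and it differs from the paper's, which simply invokes the Berry--Ess\'een expansion following Bahadur--Rao as in Dembo--Zeitouni: there one integrates by parts against the distribution function of $S_n/\sigma_n$ and uses a uniform Edgeworth-type correction, whereas you replace that analytic expansion by a local limit theorem, which makes the lattice/non-lattice dichotomy and the origin of the constant $d/(1-e^{-d})$ more transparent.

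However, there is a concrete gap in your reduction step. You bound the tail by $\int_{(T,\infty)}e^{-t}\,\mu_n(\mathrm{d}t)\le e^{-T}$ and conclude that the task ``reduces to'' the truncated integral. But the quantity to be controlled is $\sigma_n I_n$ with $\sigma_n=\sqrt{nV}\to\infty$, so after normalization your tail bound only gives $\sigma_n\int_{(T,\infty)}e^{-t}\,\mu_n(\mathrm{d}t)\le \sqrt{nV}\,e^{-T}$, which blows up for every fixed $T$. The asymptotics of the truncated integral alone therefore do not determine those of $\sigma_n I_n$: a priori the region $t>T$ could carry mass of order $1/\sigma_n$ and alter the constant. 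The fix uses the uniformity in $x$ of the very local CLT you invoke: Stone's (resp.\ Gnedenko's) theorem gives $\sigma_n\,\mu_n([x,x+1))\le \tfrac{1}{\sqrt{2\pi}}+o(1)$ uniformly over all $x$ (resp.\ $\sigma_n\,\mathbb{P}(S_n=kd)\le \tfrac{d}{\sqrt{2\pi}}+o(1)$ uniformly over all $k$), whence $\sigma_n\int_{(T,\infty)}e^{-t}\,\mu_n(\mathrm{d}t)\le \sum_{k\ge\lfloor T\rfloor}e^{-k}\,\sigma_n\mu_n([k,k+1))=O(e^{-T})$ uniformly in $n$; with that replacement the truncation argument closes. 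One further small point: the hypothesis $0<\mathbb{P}(W_1=0)<1$ is needed not only so that $d$ is well defined and $V>0$, but also so that the lattice supporting $S_n$ is $d\mathbb{Z}$ for every $n$ (rather than a lattice shifted by $nw_0$), which is exactly what makes your geometric series $\sum_{k\ge0}e^{-kd}$ the correct decomposition.
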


So, we obtain the following strong LD theorem. Note that if $X_{i}\sim P$
are lattice with maximal step $d$, then $W_{i}$ are lattice with
maximal step $\lambda^{*}d$. 
\begin{thm}[Strong Cramer's Theorem]
\cite{Bah60} \label{thm:LD-2-1} Let $\alpha>0$. Suppose that $Q$
attain $\gamma(\alpha)$. Then, 
\[
\mathbb{P}\left\{ \sum_{i=1}^{n}X_{i}\ge n\alpha\right\} \sim\frac{c}{\sqrt{2\pi n\mathrm{V}(Q\|P)}}e^{-nD(Q\|P)},
\]
where $c=1$ if $X_{i}\sim P$ are non-lattice, and $c=\frac{\lambda^{*}d}{1-e^{-\lambda^{*}d}}$
if $X_{i}\sim P$ are lattice with maximal step $d$ and $0<\mathbb{P}(X_{1}=\alpha)<1$. 
\end{thm}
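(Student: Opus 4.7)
The plan is to combine the exact change-of-measure identity \eqref{eq:-15} already derived in the excerpt with Lemma \ref{lem:expect} to obtain the sharp asymptotic. The identity tells us that
\[
\mathbb{P}\left\{\sum_{i=1}^n X_i \ge n\alpha\right\} = e^{-nD(Q\|P)}\, \mathbb{E}\left[\bone_{\{\sum_{i=1}^n W_i \ge 0\}} e^{-\sum_{i=1}^n W_i}\right],
\]
where the $W_i$ are i.i.d.\ with $\mathbb{E}[W_i]=0$ and $\mathrm{Var}[W_i] = \mathrm{V}(Q\|P)$. So the task is to show the expectation on the right is $(1+o(1))\cdot c/\sqrt{2\pi n\mathrm{V}(Q\|P)}$ with the stated constant, which is precisely the content of Lemma \ref{lem:expect}. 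Recognizing that the lattice structure of the $W_i$ is inherited from that of the $X_i$ via the affine map $x \mapsto \lambda^* x - \log \mathbb{E}_P[e^{\lambda^* X}] - D(Q\|P)$: if $X_i$ is lattice with maximal step $d$, then $W_i$ is lattice with maximal step $\lambda^* d$, and non-lattice otherwise. Substituting gives the theorem's two cases.

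For Lemma \ref{lem:expect} itself, I would write the expectation as a Riemann–Stieltjes integral $\int_0^\infty e^{-s}\, dF_n(s)$, where $F_n$ is the CDF of $S_n = \sum_{i=1}^n W_i$, then rescale $s = \sqrt{nV}\, z$. In the non-lattice case, the Berry–Esséen bound gives uniform approximation $F_n(\sqrt{nV}\,z) = \Phi(z) + O(1/\sqrt{n})$, and the dominant contribution reduces to
\[
\int_0^\infty e^{-\sqrt{nV}\,z}\, \phi(z)\, dz \sim \frac{\phi(0)}{\sqrt{nV}} = \frac{1}{\sqrt{2\pi n V}},
\]
since the exponential kernel concentrates at scale $1/\sqrt{nV}$ around $z=0$, where $\phi$ is continuous. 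In the lattice case with $W_i \in d'\mathbb{Z} + w_0$ where $d' = \lambda^* d$, the integral becomes a geometric-type sum
\[
\sum_{k \ge k_n} e^{-(d' k + r_n)}\, \mathbb{P}(S_n = d' k + r_n),
\]
where $r_n, k_n$ handle the residue needed so that $d' k_n + r_n \ge 0$; using the local CLT one has $\mathbb{P}(S_n = d' k + r_n) \sim d' \phi(0)/\sqrt{nV}$ uniformly in $k$ near zero. Summing the geometric factor $e^{-d' k}$ produces the correction $d'/(1 - e^{-d'}) = \lambda^* d/(1 - e^{-\lambda^* d})$, yielding the asymptotic with the stated $c$.

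The main obstacle is not the algebraic manipulation but the uniformity required in the local-limit step for the lattice case: classical Berry–Esséen only controls distribution functions, whereas here I need pointwise mass estimates $\mathbb{P}(S_n = x) \sim d'/\sqrt{2\pi nV}$ that are uniform in $x$ on the natural scale so that the geometric sum over $k$ can be evaluated term-by-term and a tail bound truncation (to restrict to $|x| = O(\sqrt{n})$) can be justified via exponential Markov. The non-lattice analogue, requiring only a smooth local CLT or integration by parts against the Berry–Esséen CDF bound, is conceptually easier. Once Lemma \ref{lem:expect} is in hand, the theorem follows in one line by substituting into \eqref{eq:-15}.
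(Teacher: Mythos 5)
Your proposal is correct and follows essentially the same route as the paper: the same tilting identity \eqref{eq:-15}, the same reduction to Lemma \ref{lem:expect}, and the same observation that the lattice step of $W_i$ is $\lambda^* d$. The only difference is that you additionally sketch a Berry--Ess\'een/local-CLT proof of Lemma \ref{lem:expect}, which the paper simply cites without proof.
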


\section{\label{sec:-Grtner=002013Ellis-Theorems} General Principle and
G\"artner--Ellis Theorem }

\subsection{General Principle}

We now extend Cramer's theorem to the non-i.i.d. setting. Suppose
that $\mathcal{X}$ is a metric space, and $\mathcal{B}_{\mathcal{X}}$
is the Borel $\sigma$-algebra on $\mathcal{X}$. Consider a sequence
of probability measures $\{\mu_{n}\}$ on $\mathcal{X}$. Define for
$x\in\mathcal{X}$, 
\begin{align*}
\gamma_{-}(x) & :=\lim_{\epsilon\downarrow0}\liminf_{n\to\infty}E_{n,\epsilon}(x)\\
\gamma_{+}(x) & :=\lim_{\epsilon\downarrow0}\limsup_{n\to\infty}E_{n,\epsilon}(x),
\end{align*}
where 
\[
E_{n,\epsilon}(x):=-\frac{1}{n}\log\mu_{n}(B_{\epsilon}(x)).
\]
Note that $\gamma_{-}$ and $\gamma_{+}$ can be written in terms
of relative entropies since 
\begin{equation}
E_{n,\epsilon}(x)=\inf_{\nu_{n}:\nu_{n}(B_{\epsilon}(x))=1}\frac{1}{n}D(\nu_{n}\|\mu_{n}).\label{eq:-6}
\end{equation}

\begin{thm}[General Principle]
 \label{thm:general} Assume that $\{\mu_{n}\}$ is an exponentially
tight sequence of probability measures. Then, it holds that:\\
 (a) For any closed set $F\subseteq\mathcal{X}$, 
\[
\liminf_{n\to\infty}-\frac{1}{n}\log\mu_{n}(F)\ge\inf_{x\in F}\gamma_{-}(x).
\]
(b) For any open set $G\subseteq\mathcal{X}$, 
\[
\limsup_{n\to\infty}-\frac{1}{n}\log\mu_{n}(G)\le\inf_{x\in G}\gamma_{+}(x).
\]
(c) Suppose $\gamma_{-}(x)=\gamma_{+}(x)=:\gamma(x)$ for all $x\in\mathcal{X}$.
Then, $\{\mu_{n}\}$ satisfies the LDP with the rate function $\gamma$. 
\end{thm}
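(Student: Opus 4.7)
The plan is to handle (b) by a direct localization, (a) by a finite-cover argument on compact sets followed by exponential-tightness truncation, and (c) as an immediate combination.

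Part (b) should be almost formal. Fix $x\in G$; since $G$ is open, there exists $\epsilon_{0}>0$ with $B_{\epsilon_{0}}(x)\subseteq G$, so for every $0<\epsilon\le\epsilon_{0}$ one has $\mu_{n}(G)\ge\mu_{n}(B_{\epsilon}(x))$ and therefore $-\frac{1}{n}\log\mu_{n}(G)\le E_{n,\epsilon}(x)$. Taking $\limsup_{n\to\infty}$ and then $\epsilon\downarrow0$ gives $\limsup_{n}-\frac{1}{n}\log\mu_{n}(G)\le\gamma_{+}(x)$, and infimizing over $x\in G$ yields (b).

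For (a), I would first prove the bound for compact $K\subseteq\mathcal{X}$. For each $x\in K$ and parameters $\delta>0$, $M>0$, use the definition of $\gamma_{-}(x)$ (noting that $E_{n,\epsilon}(x)$ is nonincreasing in $\epsilon$, so the outer $\lim_{\epsilon\downarrow0}$ is a supremum) to choose a radius $\epsilon_{x}=\epsilon_{x}(\delta,M)>0$ small enough that
\[
\liminf_{n\to\infty}E_{n,\epsilon_{x}}(x)\;\ge\;\min\{\gamma_{-}(x),M\}-\delta.
\]
The open balls $\{B_{\epsilon_{x}}(x)\}_{x\in K}$ cover $K$, so by compactness extract a finite subcover $\{B_{\epsilon_{x_{i}}}(x_{i})\}_{i=1}^{m}$. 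A union bound gives $\mu_{n}(K)\le m\max_{i}\mu_{n}(B_{\epsilon_{x_{i}}}(x_{i}))$, hence $-\frac{1}{n}\log\mu_{n}(K)\ge\min_{i}E_{n,\epsilon_{x_{i}}}(x_{i})-\frac{\log m}{n}$. Since $\liminf_{n}$ commutes with the \emph{finite} minimum, taking $\liminf_{n}$ and then $\delta\downarrow0$, $M\to\infty$ produces $\liminf_{n}-\frac{1}{n}\log\mu_{n}(K)\ge\inf_{x\in K}\gamma_{-}(x)$.

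To extend from compacts to an arbitrary closed $F$, I would invoke exponential tightness: for each $b>0$ pick a compact $\mathcal{K}_{b}$ with $\liminf_{n}-\frac{1}{n}\log\mu_{n}(\mathcal{K}_{b}^{c})\ge b$. Since $F\cap\mathcal{K}_{b}$ is compact, combining $\mu_{n}(F)\le\mu_{n}(F\cap\mathcal{K}_{b})+\mu_{n}(\mathcal{K}_{b}^{c})$ with the elementary inequality $-\frac{1}{n}\log(a_{n}+b_{n})\ge\min\{-\frac{1}{n}\log a_{n},-\frac{1}{n}\log b_{n}\}-\frac{\log2}{n}$ yields
\[
\liminf_{n\to\infty}-\frac{1}{n}\log\mu_{n}(F)\;\ge\;\min\Bigl\{\inf_{x\in F}\gamma_{-}(x),\,b\Bigr\},
\]
and letting $b\to\infty$ closes (a). Part (c) is then immediate: the hypothesis $\gamma_{-}=\gamma_{+}=\gamma$ inserted into (a) and (b) is exactly the two-sided bound defining the LDP with rate function $\gamma$. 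The only delicate point, and the step I expect to be the main obstacle, is the finite-cover step for compact sets: because $\epsilon_{x}$ must be chosen \emph{after} fixing $x$, one has to ensure that a single choice of $(M,\delta)$ is made uniformly before invoking compactness, so that the finite minimum of the local $\liminf$-bounds passes to a lower bound on $\inf_{x\in K}\gamma_{-}(x)$ in the limit $M\to\infty$, $\delta\downarrow0$; the truncation by $M$ is what makes this limit interchange legitimate when $\gamma_{-}$ can be infinite on parts of $K$.
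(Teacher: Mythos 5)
Your proof is correct, and parts (b) and (c), as well as the exponential-tightness truncation from compact to closed sets, coincide with the paper's argument. The genuine difference is in the lower bound for compact sets. The paper covers the compact set by balls of a \emph{single} fixed radius $\epsilon$, reduces to $\inf_{x\in F}\liminf_{n}E_{n,\epsilon}(x)$, extracts a convergent subsequence $y_{i}\to y^{*}$ of near-minimizers, and concludes via the inequality $\liminf_{n}E_{n,2\epsilon}(y^{*})\ge\gamma_{-}(y^{*})$. But since $E_{n,\epsilon}(x)$ is nonincreasing in $\epsilon$, the quantity $\gamma_{-}(y^{*})$ is the \emph{supremum} over $\epsilon'$ of $\liminf_{n}E_{n,\epsilon'}(y^{*})$, so that final inequality points the wrong way for fixed $\epsilon$; making the paper's route airtight requires an additional passage $\epsilon\downarrow0$ together with control of how $y^{*}=y^{*}(\epsilon)$ moves. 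Your version --- choosing the radius $\epsilon_{x}$ adaptively after truncating $\gamma_{-}$ at level $M$ and allowing slack $\delta$, then using that a finite minimum commutes with $\liminf$ --- is the standard Dembo--Zeitouni device and sidesteps this issue entirely, at the cost only of the $(M,\delta)$ bookkeeping, which you correctly flag as the delicate point and resolve. So your proposal is not merely an alternative execution of the same skeleton but arguably the more robust one.
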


In fact, the parameter $n$ can be replaced by any positive $a_{n}$
such that  $a_{n}\to\infty$ as $n\to\infty$, or even replaced by
$1/\epsilon$  in which case, all limits are taken as $\epsilon\downarrow0$.
\begin{proof}
Proof of (a): Since any compact set $F$ can be covered by an appropriate
finite collection of small enough balls $\{B_{\epsilon}(x_{i})\}_{i=1}^{k}$
with $x_{i}\in F$, Statement (a) for compact sets follows by the
union of events bound. Namely, 
\begin{align*}
\mu_{n}(F) & \le\sum_{i=1}^{k}\mu_{n}(B_{\epsilon}(x_{i}))\le k\max_{1\le i\le k}\mu_{n}(B_{\epsilon}(x_{i})),
\end{align*}
and hence, 
\begin{align}
\liminf_{n\to\infty}-\frac{1}{n}\log\mu_{n}(F) & \geq\min_{1\le i\le k}\liminf_{n\to\infty}-\frac{1}{n}\log\mu_{n}(B_{\epsilon}(x_{i}))\nonumber \\
 & \geq\inf_{x\in F}\liminf_{n\to\infty}-\frac{1}{n}\log\mu_{n}(B_{\epsilon}(x)).\label{eq:-5}
\end{align}
Let $y_{i}\in F$  attain the infimum in \eqref{eq:-5} within a
gap $1/i$. Passing to a subsequence, we assume $y_{i}\to y^{*}\in F$
as $i\to\infty$. So, the bound in \eqref{eq:-5} is equal to 
\begin{align}
\lim_{i\to\infty}\liminf_{n\to\infty}-\frac{1}{n}\log\mu_{n}(B_{\epsilon}(y_{i})) & .\label{eq:-5-1}
\end{align}
 For sufficiently large $i$, $d(y^{*},y_{i})<\epsilon$. So, \eqref{eq:-5-1}
is further lower bounded by 
\begin{align}
\liminf_{n\to\infty}-\frac{1}{n}\log\mu_{n}(B_{2\epsilon}(y^{*})) & .\label{eq:-5-1-1}
\end{align}
Substituting these into \eqref{eq:-5} yields 
\begin{align*}
\liminf_{n\to\infty}-\frac{1}{n}\log\mu_{n}(F) & \geq\liminf_{n\to\infty}-\frac{1}{n}\log\mu_{n}(B_{2\epsilon}(y^{*}))\\
 & \ge\gamma_{-}(y^{*})\\
 & \ge\inf_{x\in F}\gamma_{-}(x).
\end{align*}

Finally, by the assumption that $\mu_{n}$ is an exponentially tight
family of probability measures, we can extend Statement (a) from compact
sets to all closed sets. 

Proof of (b): For open set $G$, let $x\in G$. Then, $B_{\epsilon}(x)\subseteq G$
for sufficiently small $\epsilon>0$. We have 
\begin{align*}
\limsup_{n\to\infty}-\frac{1}{n}\log\mu_{n}(G) & \le\lim_{\epsilon\downarrow0}\limsup_{n\to\infty}-\frac{1}{n}\log\mu_{n}(B_{\epsilon}(x))\\
 & =\gamma_{+}(x).
\end{align*}
Since $x\in G$ is arbitrary, 
\[
\limsup_{n\to\infty}-\frac{1}{n}\log\mu_{n}(G)\le\inf_{x\in G}\gamma_{+}(x).
\]
 
\end{proof}

\subsection{G\"artner--Ellis Theorem}

We now suppose that $\mathcal{X}=\mathbb{R}^{d}$.  Let $\mu_{n}$
denote the law of $Z_{n}\in\mathbb{R}^{d}$. Denote $\Lambda_{n}(\lambda):=\log\mathbb{E}[e^{\left\langle \lambda,Z_{n}\right\rangle }]$.
Denote $\Lambda(\lambda):=\lim_{n\to\infty}\frac{1}{n}\Lambda_{n}(n\lambda)$
where the limit is supposed to exist. Denote its effective domain
$\mathcal{D}_{\Lambda}:=\left\{ \lambda\in\mathbb{R}^{d}:\Lambda(\lambda)<+\infty\right\} $.
Let $\Lambda^{*}(\cdot)$ be the Fenchel--Legendre transform of $\Lambda(\cdot)$,
with $\mathcal{D}_{\Lambda^{*}}:=\left\{ x\in\mathbb{R}^{d}:\Lambda^{*}(x)<+\infty\right\} $.
Define for $x\in\mathbb{R}^{d}$,
\begin{align*}
\gamma_{n}(x) & :=\inf_{\nu_{n}:\mathbb{E}_{\nu_{n}}[X]=x}\frac{1}{n}D(\nu_{n}\|\mu_{n})\\
 & =\sup_{\lambda\in\mathbb{R}^{d}}\left\langle \lambda,x\right\rangle -\frac{1}{n}\Lambda_{n}(n\lambda).
\end{align*}

\textbf{Assumption}: For each $\lambda\in\mathbb{R}^{d}$, the logarithmic
moment generating function $\Lambda(\lambda)$ exists as an extended
real number. Further, the origin belongs to the interior of $\mathcal{D}_{\Lambda}$. 
\begin{defn}
$y\in\mathbb{R}^{d}$ is an exposed point of $\Lambda^{*}$ if for
some $\lambda\in\mathbb{R}^{d}$ and all $x\neq y$, 
\begin{equation}
\left\langle \lambda,y\right\rangle -\Lambda^{*}(y)>\left\langle \lambda,x\right\rangle -\Lambda^{*}(x).\label{eq:-2}
\end{equation}
 $\lambda$ in \eqref{eq:-2} is called an exposing hyperplane.
\end{defn}

\begin{thm}[G\"artner--Ellis Theorem]
\cite[Theorem 2.3.6]{Dembo} \label{thm:GE} Let the assumption above
hold. \\
 (a) For any closed set $F\subseteq\mathbb{R}^{d}$, 
\[
\liminf_{n\to\infty}-\frac{1}{n}\log\mu_{n}(F)\ge\inf_{x\in F}\Lambda^{*}(x).
\]
(b) For any open set $G\subseteq\mathbb{R}^{d}$, 
\[
\limsup_{n\to\infty}-\frac{1}{n}\log\mu_{n}(G)\le\inf_{x\in G\cap\mathcal{F}}\Lambda^{*}(x),
\]
where $\mathcal{F}$ is the set of exposed points of $\Lambda^{*}$
whose exposing hyperplane belongs to $\mathcal{D}_{\Lambda}^{o}$.
\\
\end{thm}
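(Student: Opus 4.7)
The plan is to deduce the theorem from the General Principle (Theorem~\ref{thm:general}) by identifying $\gamma_-$ and $\gamma_+$ with $\Lambda^*$ on the appropriate subsets, together with verifying exponential tightness of $\{\mu_n\}$. Exponential tightness follows from the assumption $0\in\mathcal{D}_\Lambda^o$: for small $\rho>0$, $\Lambda(\pm\rho\,e_i)<\infty$ along each coordinate direction, so an exponential Chebyshev bound coordinate-wise gives
\[
\liminf_{n\to\infty}-\frac{1}{n}\log\mu_n(\mathbb{R}^d\setminus[-R,R]^d)\to+\infty\quad\text{as }R\to\infty.
\]

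For part (a), I would show $\gamma_-(x)\geq\Lambda^*(x)$ for all $x\in\mathbb{R}^d$ by a one-sided Chebyshev estimate. For any $\lambda\in\mathbb{R}^d$, the bound $\langle\lambda,z\rangle\geq\langle\lambda,x\rangle-|\lambda|\epsilon$ on $B_\epsilon(x)$ yields
\[
\mu_n(B_\epsilon(x))\leq e^{-n(\langle\lambda,x\rangle-|\lambda|\epsilon)+\Lambda_n(n\lambda)},
\]
so $E_{n,\epsilon}(x)\geq\langle\lambda,x\rangle-|\lambda|\epsilon-\frac{1}{n}\Lambda_n(n\lambda)$. Taking $\liminf_n$, then $\epsilon\downarrow 0$, then $\sup_\lambda$, gives $\gamma_-(x)\geq\Lambda^*(x)$. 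Combined with exponential tightness and Theorem~\ref{thm:general}(a), this yields (a).

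For part (b), I would show $\gamma_+(x)\leq\Lambda^*(x)$ for every exposed point $x\in\mathcal{F}$ with exposing hyperplane $\lambda\in\mathcal{D}_\Lambda^o$ by change of measure. Define the tilted law $\tilde\mu_n$ via $\frac{\mathrm{d}\tilde\mu_n}{\mathrm{d}\mu_n}(z)=e^{n\langle\lambda,z\rangle-\Lambda_n(n\lambda)}$, so
\[
\mu_n(B_\epsilon(x))\geq e^{-n(\langle\lambda,x\rangle+|\lambda|\epsilon)+\Lambda_n(n\lambda)}\,\tilde\mu_n(B_\epsilon(x)).
\]
The scaled log-MGF of $\tilde\mu_n$ converges to $\tilde\Lambda(\eta)=\Lambda(\lambda+\eta)-\Lambda(\lambda)$, and since $\lambda\in\mathcal{D}_\Lambda^o$, $\tilde\mu_n$ satisfies the same hypotheses as $\mu_n$. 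Its Fenchel--Legendre transform is $\tilde\Lambda^*(y)=\Lambda^*(y)-\langle\lambda,y\rangle+\Lambda(\lambda)$; the exposed-point identity $\Lambda(\lambda)=\langle\lambda,x\rangle-\Lambda^*(x)$ yields $\tilde\Lambda^*(x)=0$ and $\tilde\Lambda^*(y)>0$ for all $y\neq x$. Applying part (a), already established, to $\tilde\mu_n$ on the closed set $B_\epsilon(x)^c$ gives $\liminf_n -\frac{1}{n}\log\tilde\mu_n(B_\epsilon(x)^c)>0$, so $\tilde\mu_n(B_\epsilon(x))\to 1$. Taking $\limsup_n$ of the tilting inequality and sending $\epsilon\downarrow 0$ gives $\gamma_+(x)\leq\Lambda^*(x)$, and Theorem~\ref{thm:general}(b) then implies (b).

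The main obstacle is the tilted-measure concentration: to conclude $\inf_{y\in B_\epsilon(x)^c}\tilde\Lambda^*(y)>0$, pointwise strict positivity is not enough, since in an unbounded space $\tilde\Lambda^*$ might approach $0$ at infinity. This is resolved by combining the exposed-point hypothesis (which supplies pointwise strict positivity on $B_\epsilon(x)^c$), lower semicontinuity of $\tilde\Lambda^*$ as a Fenchel--Legendre transform, and exponential tightness of $\tilde\mu_n$ (inherited from $\lambda\in\mathcal{D}_\Lambda^o$), which together reduce the infimum over $B_\epsilon(x)^c$ to the infimum over a compact set, where a strictly positive lower semicontinuous function attains a strictly positive minimum. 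This interplay is precisely why the open-set bound in (b) is restricted to the exposed-point set $\mathcal{F}$ rather than all of $\mathbb{R}^d$.
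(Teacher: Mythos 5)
Your proposal is correct, and its skeleton (exponential tightness plus local ball estimates fed into the General Principle, with exponential tilting for the upper bound) matches the paper's. Two points differ in execution. First, for part (a) you bound $E_{n,\epsilon}(x)$ directly by the exponential Chebyshev inequality $\mu_n(B_\epsilon(x))\le e^{-n(\langle\lambda,x\rangle-|\lambda|\epsilon)+\Lambda_n(n\lambda)}$; the paper instead argues on the primal side, noting that $\nu_n(B_\epsilon(x))=1$ forces $\mathbb{E}_{\nu_n}[X]\in B_\epsilon(x)$ and hence $E_{n,\epsilon}(x)\ge\inf_{y\in B_\epsilon(x)}\gamma_n(y)$, only then invoking the duality $\gamma_n(y)=\sup_\lambda\langle\lambda,y\rangle-\tfrac{1}{n}\Lambda_n(n\lambda)$. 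The two are equivalent bounds; yours is the classical dual route the paper is deliberately recasting in entropy form, and it buys nothing or loses nothing mathematically. Second, and more substantively, for part (b) the paper simply cites the tilted-measure concentration $\tilde\mu_n(B_\epsilon(y)^c)\to 0$ as Lemma \ref{lem:concentration} from \cite{Dembo}, whereas you actually prove it by applying the already-established part (a) to $\tilde\mu_n$ (which inherits the standing assumption because $\eta\in\mathcal{D}_\Lambda^o$ shifts to $0\in\mathcal{D}_{\tilde\Lambda}^o$), using the exposed-point inequality to get $\tilde\Lambda^*>0$ off $y$ and exponential tightness plus lower semicontinuity to convert pointwise positivity into a positive infimum on a compact set. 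That is exactly the Dembo--Zeitouni argument behind the cited lemma, so your write-up is self-contained where the paper is not. One cosmetic caution: the identity $\tilde\Lambda^*(y)=0$ you invoke requires $\Lambda=\Lambda^{**}$ at $\eta$ (true since $\eta\in\mathcal{D}_\Lambda^o$ where the convex function $\Lambda$ is continuous), but your argument only actually needs $\tilde\Lambda^*(z)>0$ for $z\neq y$, which follows from the one-sided bound $\Lambda(\eta)\ge\langle\eta,y\rangle-\Lambda^*(y)$ together with the strict exposing inequality, so nothing is at risk.
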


\begin{defn}
A convex function $\Lambda:\mathbb{R}^{d}\to(-\infty,\infty]$ is
essentially smooth if: (a) $\mathcal{D}_{\Lambda}^{o}$ is non-empty.
(b) $\Lambda(\cdot)$ is differentiable throughout $\mathcal{D}_{\Lambda}^{o}$.
(c) $\Lambda(\cdot)$ is steep, namely, $\lim_{n\to\infty}|\nabla\Lambda(\lambda_{n})|=\infty$
whenever $(\lambda_{n})$ is a sequence in $\mathcal{D}_{\Lambda}^{o}$
converging to a boundary point of $\mathcal{D}_{\Lambda}^{o}$.
\end{defn}

Theorem \ref{thm:GE} implies the following statement \cite[Theorem 2.3.6]{Dembo}.
\\
(c) If $\Lambda$ is an essentially smooth, lower semicontinuous function,
then the LDP holds with the good rate function $\Lambda^{*}(\cdot)$.

\begin{proof}
Proof of (a): For any measure $\nu_{n}$, $\nu_{n}(B_{\epsilon}(x))=1$
implies $\mathbb{E}_{\nu_{n}}[X]\in B_{\epsilon}(x)$, since a closed
ball is (completely) convex (due to the fact that the norm $\|\cdot\|_{p}$
with $p\ge1$ is convex). So, by \eqref{eq:-6},
\begin{align}
\gamma_{-}(x) & \ge\lim_{\epsilon\downarrow0}\liminf_{n\to\infty}\inf_{y\in B_{\epsilon}(x)}\gamma_{n}(y)\nonumber \\
 & =\lim_{\epsilon\downarrow0}\liminf_{n\to\infty}\inf_{y\in B_{\epsilon}(x)}\sup_{\lambda\in\mathbb{R}^{d}}\left\langle \lambda,y\right\rangle -\frac{1}{n}\Lambda_{n}(n\lambda)\nonumber \\
 & \ge\sup_{\lambda\in\mathbb{R}^{d}}\lim_{\epsilon\downarrow0}\inf_{y\in B_{\epsilon}(x)}\left\langle \lambda,y\right\rangle -\limsup_{n\to\infty}\frac{1}{n}\Lambda_{n}(n\lambda)\nonumber \\
 & =\sup_{\lambda\in\mathbb{R}^{d}}\left\langle \lambda,x\right\rangle -\Lambda(\lambda)\label{eq:-8}\\
 & =\Lambda^{*}(x),\nonumber 
\end{align}
where \eqref{eq:-8} follows since the linear functional $x\mapsto\left\langle \lambda,x\right\rangle $
is continuous.

The proof of (a) is complete by showing that $\{\mu_{n}\}$ is an
exponentially tight sequence of probability measures. 
\begin{lem}[Exponential Tightness]
\label{lem:Exponential Tightness} \cite[pp. 48-49]{Dembo}  Under
that assumption that the origin is in $\mathcal{D}_{\Lambda}^{o}$,
it holds that 
\begin{equation}
\lim_{\rho\to\infty}\liminf_{n\to\infty}-\frac{1}{n}\log\mu_{n}(\mathbb{R}^{d}\backslash[-\rho,\rho]^{d})=+\infty.\label{eq:-94-1-1-1}
\end{equation}
\end{lem}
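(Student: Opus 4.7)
The plan is to reduce the problem to a one-dimensional Chernoff estimate along each coordinate axis and then use a union bound. Specifically, I would first write
\[
\mathbb{R}^{d}\setminus[-\rho,\rho]^{d}\subseteq\bigcup_{i=1}^{d}\Bigl(\{z:z(i)>\rho\}\cup\{z:z(i)<-\rho\}\Bigr),
\]
so that by the union bound and the elementary identity $-\frac{1}{n}\log\sum_{j=1}^{2d}a_{j}\ge\min_{j}\bigl(-\frac{1}{n}\log a_{j}\bigr)-\frac{\log(2d)}{n}$, it suffices to show that each of the $2d$ terms $-\frac{1}{n}\log\mu_{n}(\{z:\pm z(i)>\rho\})$ can be made arbitrarily large by choosing $\rho$ large and then $n\to\infty$.

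Next I would apply the exponential Markov (Chernoff) inequality along the $i$th coordinate. For any $b>0$,
\[
\mu_{n}(\{z:z(i)>\rho\})=\mathbb{P}\{Z_{n}(i)>\rho\}\le e^{-nb\rho}\,\mathbb{E}[e^{nb Z_{n}(i)}]=e^{-nb\rho+\Lambda_{n}(nb e_{i})},
\]
which gives
\[
-\frac{1}{n}\log\mu_{n}(\{z:z(i)>\rho\})\ge b\rho-\frac{1}{n}\Lambda_{n}(nbe_{i}).
\]
Taking $\liminf_{n\to\infty}$ and using the hypothesis that the limit $\Lambda(be_{i})=\lim_{n\to\infty}\frac{1}{n}\Lambda_{n}(nbe_{i})$ exists as an extended real, the right-hand side tends to $b\rho-\Lambda(be_{i})$. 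An analogous bound with $b$ replaced by $-b$ handles the other half-space $\{z:z(i)<-\rho\}$.

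Now comes the one step that uses the assumption seriously: because the origin lies in $\mathcal{D}_{\Lambda}^{o}$, there exists $b_{0}>0$ so that $\pm b_{0}e_{i}\in\mathcal{D}_{\Lambda}$ for every $i\in[d]$, hence $M:=\max_{1\le i\le d}\max\{\Lambda(b_{0}e_{i}),\Lambda(-b_{0}e_{i})\}<\infty$. Combining the displays above with the union bound and using this single fixed $b_{0}$ yields
\[
\liminf_{n\to\infty}-\frac{1}{n}\log\mu_{n}(\mathbb{R}^{d}\setminus[-\rho,\rho]^{d})\ge b_{0}\rho-M,
\]
which tends to $+\infty$ as $\rho\to\infty$, giving \eqref{eq:-94-1-1-1}.

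The only subtle point, which I expect to be the main (mild) obstacle, is the interchange of $\liminf_{n}$ with the union bound: one must handle the $\frac{\log(2d)}{n}$ correction cleanly, and one must verify that $\frac{1}{n}\Lambda_{n}(\pm nb_{0}e_{i})$ indeed converges (and not merely has a $\limsup$) to the finite value $\Lambda(\pm b_{0}e_{i})$ for each $i$. Both are guaranteed by the standing assumption of the theorem that $\Lambda(\lambda)$ exists as an extended real at every $\lambda$, together with the interiority of $0$ in $\mathcal{D}_{\Lambda}$; the remaining manipulations are routine.
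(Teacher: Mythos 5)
Your proof is correct and is essentially the same argument as the one the paper points to in Dembo--Zeitouni (pp.~48--49): a Chernoff bound along each coordinate half-space, the union bound over the $2d$ half-spaces, and the hypothesis $0\in\mathcal{D}_{\Lambda}^{o}$ to fix a single $b_{0}>0$ with $\Lambda(\pm b_{0}e_{i})<\infty$. The interchange of $\liminf_{n}$ with the finite minimum and the vanishing $\frac{\log(2d)}{n}$ correction are handled correctly, so no gap remains.
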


Proof of (b):  Fix $y\in\mathcal{F}\cap G$ and let $\eta\in\mathcal{D}_{\Lambda}^{o}$
denote an exposing hyperplane for $y$. Then, for all $n$ large enough,
$\Lambda_{n}(n\eta)<\infty$ and the associated probability measures
$\tilde{\mu}_{n}$ are well-defined via 
\[
\frac{\d\tilde{\mu}_{n}}{\d\mu_{n}}(z)=e^{\left\langle n\eta,z\right\rangle -\Lambda_{n}(n\eta)}.
\]
\begin{lem}[Concentration on Balls]
\cite[pp. 49-50]{Dembo}  \label{lem:concentration} For any $\epsilon>0$,
\begin{align}
\liminf_{n\to\infty}-\frac{1}{n}\log\tilde{\mu}_{n}(B_{\epsilon}(y)^{c}) & >0.\label{eq:-95-1-1-1-1}
\end{align}
In other words, $\tilde{\mu}_{n}(B_{\epsilon}(y)^{c})\to0$ exponentially
fast as $n\to\infty$. 
\end{lem}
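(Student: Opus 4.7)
The plan is to apply part (a) of the G\"artner--Ellis theorem, which has already been established just above, to the tilted sequence $\tilde{\mu}_n$ itself, for which the associated rate function will vanish exactly at $y$.

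First I would compute the tilted logarithmic moment generating function. Directly from the definition of $\tilde{\mu}_n$,
\[
\tilde{\Lambda}_n(\lambda) := \log \int e^{\langle \lambda, z\rangle} \,\d\tilde{\mu}_n(z) = \Lambda_n(\lambda + n\eta) - \Lambda_n(n\eta),
\]
so $\tilde{\Lambda}(\lambda) := \lim_{n\to\infty} \tfrac{1}{n} \tilde{\Lambda}_n(n\lambda) = \Lambda(\lambda + \eta) - \Lambda(\eta)$, and its Fenchel--Legendre transform is
\[
\tilde{\Lambda}^*(x) = \Lambda^*(x) - \langle \eta, x\rangle + \Lambda(\eta).
\]
Because $\eta \in \mathcal{D}_\Lambda^o$, the origin lies in $\mathcal{D}_{\tilde{\Lambda}}^o = \mathcal{D}_\Lambda^o - \eta$, so the hypotheses of Theorem \ref{thm:GE}(a) and of Lemma \ref{lem:Exponential Tightness} apply to $\tilde{\mu}_n$.

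Next I would exploit the exposed-point property. Since $y$ is an exposed point of $\Lambda^*$ with exposing hyperplane $\eta$, rearranging $\langle \eta, y\rangle - \Lambda^*(y) > \langle \eta, x\rangle - \Lambda^*(x)$ gives $\tilde{\Lambda}^*(x) > \tilde{\Lambda}^*(y)$ for every $x \neq y$. Moreover, since the strict supremum in the biconjugate formula $\Lambda(\eta) = \sup_x (\langle \eta, x\rangle - \Lambda^*(x))$ is attained uniquely at $y$, we have $\Lambda(\eta) = \langle \eta, y\rangle - \Lambda^*(y)$, so $\tilde{\Lambda}^*(y) = 0$. Hence $\tilde{\Lambda}^*(x) > 0$ for every $x \neq y$.

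Finally, I would apply Theorem \ref{thm:GE}(a) to $\tilde{\mu}_n$ on the closed set $B_\epsilon(y)^c$, obtaining
\[
\liminf_{n\to\infty} -\frac{1}{n} \log \tilde{\mu}_n(B_\epsilon(y)^c) \geq \inf_{x \in B_\epsilon(y)^c} \tilde{\Lambda}^*(x).
\]
The hard part will be to upgrade the pointwise strict positivity of $\tilde{\Lambda}^*$ on $\{y\}^c$ to \emph{strict} positivity of this infimum, since a priori minimizing sequences could escape to infinity. To handle this, I would invoke Lemma \ref{lem:Exponential Tightness} for $\tilde{\mu}_n$ to choose a compact set $K \supseteq \overline{B_\epsilon(y)}$ with $\liminf_n -\tfrac{1}{n}\log \tilde{\mu}_n(K^c)$ as large as desired, and split $B_\epsilon(y)^c = (B_\epsilon(y)^c \cap K) \cup K^c$. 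On the compact piece $B_\epsilon(y)^c \cap K$, the lower semicontinuity of $\tilde{\Lambda}^*$ forces the infimum to be attained at some $x_0 \neq y$, hence to be strictly positive; the $K^c$ piece is handled directly by exponential tightness. Combining the two contributions yields the claim.
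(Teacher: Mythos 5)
Your argument is correct and is essentially the proof the paper points to (Dembo--Zeitouni, pp.~49--50, which the lemma cites in lieu of a written proof): tilt the measures, identify $\tilde{\Lambda}^{*}(x)=\Lambda^{*}(x)-\langle\eta,x\rangle+\Lambda(\eta)$, use the exposed-point inequality to get strict positivity of $\tilde{\Lambda}^{*}$ off $y$, and combine the compact-set upper bound with the exponential tightness of $\tilde{\mu}_{n}$ (valid since $0\in\mathcal{D}_{\tilde{\Lambda}}^{o}$). One cosmetic remark: you do not actually need the biconjugate identity $\Lambda(\eta)=\langle\eta,y\rangle-\Lambda^{*}(y)$, since $\tilde{\Lambda}(0)=0$ gives $\tilde{\Lambda}^{*}\ge0$ everywhere, which together with $\tilde{\Lambda}^{*}(x)>\tilde{\Lambda}^{*}(y)$ for $x\neq y$ already yields the strict positivity you use.
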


Given any $\tilde{\mu}_{n}$, denote an auxiliary probability measure
$\pi_{n}:=\tilde{\mu}_{n}(\cdot|B_{\epsilon}(y))$. Denote $p_{n}:=\tilde{\mu}_{n}(B_{\epsilon}(y))$,
which converges to $1$ by Lemma \ref{lem:concentration}. So, 
\[
\frac{\d\pi_{n}}{\d\mu_{n}}(z)=\frac{e^{\left\langle n\eta,z\right\rangle -\Lambda_{n}(n\eta)}\bone_{B_{\epsilon}(y)}(z)}{p_{n}}.
\]

Since $\pi_{n}$ is a feasible solution to the infimization in \eqref{eq:-6},
\begin{align*}
\limsup_{n\to\infty}-\frac{1}{n}\log\mu_{n}(B_{\epsilon}(y)) & \le\limsup_{n\to\infty}\frac{D(\pi_{n}\|\mu_{n})}{n}\\
 & =\limsup_{n\to\infty}\frac{1}{n}\int\log\frac{e^{\left\langle n\eta,z\right\rangle -\Lambda_{n}(n\eta)}}{p_{n}}\d\pi_{n}(z)\\
 & =\limsup_{n\to\infty}\mathbb{E}_{\pi_{n}}[\left\langle \eta,Z\right\rangle ]-\Lambda(\eta)\\
 & \le\sup_{z\in B_{\epsilon}(y)}\left\langle \eta,z\right\rangle -\Lambda(\eta).
\end{align*}
 So,
\begin{align}
\lim_{\epsilon\downarrow0}\limsup_{n\to\infty}-\frac{1}{n}\log\mu_{n}(B_{\epsilon}(y)) & \le\lim_{\epsilon\downarrow0}\sup_{z\in B_{\epsilon}(y)}\left\langle \eta,z\right\rangle -\Lambda(\eta)\nonumber \\
 & =\left\langle \eta,y\right\rangle -\Lambda(\eta)\label{eq:-7}\\
 & \le\Lambda^{*}(y),\nonumber 
\end{align}
where \eqref{eq:-7} follows since the linear functional $z\mapsto\left\langle \eta,z\right\rangle $
is continuous. 
\end{proof}
It is also possible to generalize the G\"artner--Ellis theorem to
the abstract version, i.e., the Baldi theorem in \cite[Theorem 4.5.20]{Dembo},
by using the entropy method.

\section{\label{sec:Sanov's-Theorems}Sanov's Theorems}

\subsection{\label{sec:Sanov's-theorem}Sanov's Theorem }

The empirical sum is in fact determined by the empirical measure.
We next consider the LD theory of empirical measures and prove Sanov's
theorem by the entropy method. For the finite alphabet case, Sanov's
theorem can be proven by another information-theoretic method, known
as the \emph{method of types}. In fact, Csiszár found that the method
of types, combined with discretization techniques, can be also used
to prove Sanov's theorem for the general alphabet case; see \cite{csiszar2006simple}.
Another proof based on discretization is given in \cite{baldasso2022proof}.
However, our proof given below is based on Csiszár's works on I-projections
in \cite{csiszar1975divergence,csiszar1984sanov}. 

Recall that $\mathcal{X}$ is a Hausdorff topological space (so that
all singletons are closed and hence Borel), and $\mathcal{B}_{\mathcal{X}}$
is the Borel $\sigma$-algebra on $\mathcal{X}$. Recall that $\L_{X^{n}}=\frac{1}{n}\sum_{i=1}^{n}\delta_{X_{i}}$
denotes the empirical measure of $X^{n}\sim P^{\otimes n}$. Consider
the weak topology on $\mathcal{P}(\mathcal{X})$. Let $\mathcal{B}^{w}$
denote the Borel $\sigma$-algebra generated by the weak topology.
Sanov's theorem says that under $\mathcal{B}^{w}$, $\L_{X^{n}}$
satisfies the large deviation principle. 
\begin{defn}
For a set of probability measures $\mathcal{A}\subseteq\mathcal{P}(\mathcal{X})$,
the completely convex hull of $\mathcal{A}$, denoted by $\cconv\mathcal{A}$,
is the set of probability measures $Q_{X}$ such that $Q_{X}=Q_{Z}\circ Q_{X|Z}$
for some probability space $(\mathcal{Z},\Sigma_{\mathcal{Z}},Q_{Z})$
and Markov kernel $Q_{X|Z}$ from $(\mathcal{Z},\Sigma_{\mathcal{Z}})$
to $(\mathcal{X},\mathcal{B}_{\mathcal{X}})$ satisfying $Q_{X|Z=z}\in\mathcal{A}$
for each $z\in\mathcal{Z}$. 
\end{defn}

\begin{defn}
\cite{csiszar1984sanov} A set of probability measures $\mathcal{A}\subseteq\mathcal{P}(\mathcal{X})$
is completely convex if $\mathcal{A}=\cconv\mathcal{A}$. 
\end{defn}

Note that a completely convex set is obviously convex, but the converse
is not true. In fact, the definition above reduces to the one of ``convex''
if we restrict $\mathcal{Z}$ to be finite. An example of completely
convex sets is sets specified by linear constraints, e.g., $\{Q:Q(f)=\alpha\}$
and $\{Q:Q(f)\ge\alpha\}$ for any measurable $f$ and real number
$\alpha$. Another example is closed balls in $\mathcal{P}(\mathcal{X})$
under the Lévy--Prokhorov metric. 
\begin{lem}
\label{lem:Any-closed-ball}Any closed ball $B_{\epsilon]}(P_{X})$
in $\mathcal{P}(\mathcal{X})$ under the Lévy--Prokhorov metric is
completely convex. Moreover, if $\mathcal{X}$ is Polish, then the
completely convex hull of a convex set $\Gamma\subseteq\mathcal{P}(\mathcal{X})$
satisfies that $\Gamma\subseteq\cconv\Gamma\subseteq\overline{\Gamma}$
where $\overline{\Gamma}$ is the closure of $\Gamma$ under the Lévy--Prokhorov
metric. 
\end{lem}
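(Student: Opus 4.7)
The plan is to handle the two claims in the lemma separately, using a direct verification of the definition for the first and a Polish-space approximation argument for the second.

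For the first claim, take any $Q_{X}\in\cconv B_{\epsilon]}(P_{X})$, so that $Q_{X}(A)=\int Q_{X|Z=z}(A)\,\d Q_{Z}(z)$ for $A\in\mathcal{B}_{\mathcal{X}}$, with $\Levy(P_{X},Q_{X|Z=z})\le\epsilon$ for every $z$. For any $\delta>\epsilon$ and any closed $A\subseteq\mathcal{X}$, the defining inequality of the L\'evy--Prokhorov metric gives $P_{X}(A)\le Q_{X|Z=z}(A_{\delta})+\delta$ pointwise in $z$. Since $A_{\delta}$ is open, $z\mapsto Q_{X|Z=z}(A_{\delta})$ is measurable, and integrating against $Q_{Z}$ yields $P_{X}(A)\le Q_{X}(A_{\delta})+\delta$. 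Letting $\delta\downarrow\epsilon$ gives $\Levy(P_{X},Q_{X})\le\epsilon$, so $Q_{X}\in B_{\epsilon]}(P_{X})$.

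The first inclusion in the second claim, $\Gamma\subseteq\cconv\Gamma$, is immediate by taking $(\mathcal{Z},\Sigma_{\mathcal{Z}},Q_{Z})$ to be a one-point probability space and $Q_{X|Z}\equiv Q_{X}$. The harder inclusion, $\cconv\Gamma\subseteq\overline{\Gamma}$, is the main obstacle. The idea is to realize any $Q_{X}\in\cconv\Gamma$ as the barycenter of a Borel probability measure on $\mathcal{P}(\mathcal{X})$ concentrated on $\Gamma$ and then approximate that barycenter by finite convex combinations of elements of $\Gamma$. Writing $Q_{X}=Q_{Z}\circ Q_{X|Z}$ with $Q_{X|Z=z}\in\Gamma$ for every $z$, push $Q_{Z}$ forward through the measurable map $z\mapsto Q_{X|Z=z}$ to obtain $\nu\in\mathcal{P}(\mathcal{P}(\mathcal{X}))$ with $\nu(\Gamma)=1$ and barycenter $\int Q\,\d\nu(Q)=Q_{X}$. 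Since $\mathcal{X}$ is Polish, $(\mathcal{P}(\mathcal{X}),\Levy)$ is Polish, so $\nu$ is tight.

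For each $n$, choose a compact $K_{n}\subseteq\mathcal{P}(\mathcal{X})$ with $\nu(K_{n})\ge1-1/n$, partition $K_{n}$ into finitely many Borel pieces $\{B_{i,n}\}_{i=1}^{m_{n}}$ of L\'evy--Prokhorov diameter at most $1/n$, fix $Q_{0}\in\Gamma$, and (whenever $\nu(B_{i,n})>0$, in which case $B_{i,n}\cap\Gamma\neq\emptyset$ because $\nu$ is concentrated on $\Gamma$) pick $Q_{i,n}\in B_{i,n}\cap\Gamma$. Set
\[
\tilde{\nu}_{n}:=\sum_{i}\nu(B_{i,n})\delta_{Q_{i,n}}+\nu(K_{n}^{c})\delta_{Q_{0}}.
\]
A short computation on the L\'evy--Prokhorov distance (for any closed $C\subseteq\mathcal{P}(\mathcal{X})$, every $B_{i,n}$ meeting $C$ contributes $Q_{i,n}\in C_{1/n}$, while $\nu(K_{n}^{c})\le1/n$) shows $\Levy(\nu,\tilde{\nu}_{n})\le1/n$, so $\tilde{\nu}_{n}\to\nu$ weakly. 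The barycenter of $\tilde{\nu}_{n}$ is the finite convex combination $\tilde{Q}_{X}^{(n)}:=\sum_{i}\nu(B_{i,n})Q_{i,n}+\nu(K_{n}^{c})Q_{0}$, which lies in $\Gamma$ by convexity. Finally, for any bounded continuous $f:\mathcal{X}\to\mathbb{R}$, the map $Q\mapsto\int f\,\d Q$ is bounded continuous on $\mathcal{P}(\mathcal{X})$, so testing against $\tilde{\nu}_{n}\to\nu$ yields $\int f\,\d\tilde{Q}_{X}^{(n)}\to\int f\,\d Q_{X}$; hence $\tilde{Q}_{X}^{(n)}\to Q_{X}$ weakly and $Q_{X}\in\overline{\Gamma}$. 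The delicate point is ensuring the atoms of $\tilde{\nu}_{n}$ lie in $\Gamma$ itself (not merely in $\overline{\Gamma}$), which is where the concentration $\nu(\Gamma)=1$ is used, together with tightness from the Polish hypothesis.
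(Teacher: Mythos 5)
Your proof of the first claim is essentially the paper's (integrate the defining inequality of the L\'evy--Prokhorov metric over $Q_{Z}$ and let $\delta\downarrow\epsilon$; you use the inequality in the symmetric direction, which is immaterial). For the inclusion $\cconv\Gamma\subseteq\overline{\Gamma}$ you take a genuinely different and, in my view, cleaner route. The paper first proves $\overline{\Gamma}=\bigcap_{\epsilon>0}\Gamma_{\epsilon]}$, shows each enlargement $\Gamma_{\epsilon]}$ is convex, and then, using a countable dense subset of $\Gamma_{\epsilon]}$ (separability of $\mathcal{P}(\mathcal{X})$), decomposes $Q_{X}$ into countably many conditional mixtures, approximates each by a nearby $R_{i}\in\Gamma_{\epsilon]}$, and truncates the series to land in $\Gamma_{\epsilon+2\delta]}$; along the way it has to patch a measurability issue by enlarging the $\sigma$-algebra on $\mathcal{Z}$. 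You instead push $Q_{Z}$ forward to a measure $\nu$ on $\mathcal{P}(\mathcal{X})$, invoke Ulam tightness on the Polish space $(\mathcal{P}(\mathcal{X}),\Levy)$, and approximate the barycenter by finite convex combinations of elements of $\Gamma$ itself, so your approximants lie in $\Gamma$ rather than in its enlargements and the conclusion $Q_{X}\in\overline{\Gamma}$ follows directly from weak convergence. Two small points deserve a sentence in a polished write-up: (i) the pushforward requires that $z\mapsto Q_{X|Z=z}$ be measurable into $(\mathcal{P}(\mathcal{X}),\mathcal{B}^{w})$, which holds because for Polish $\mathcal{X}$ the Borel $\sigma$-algebra of the weak topology is generated by the evaluation maps $Q\mapsto Q(B)$, and these are measurable by the definition of a Markov kernel; (ii) the phrase ``$\nu(\Gamma)=1$'' is imprecise since $\Gamma$ need not be Borel, but your actual use of it --- that $\nu(B_{i,n})>0$ forces $B_{i,n}$ to meet the image of $z\mapsto Q_{X|Z=z}$, which is contained in $\Gamma$ --- is sound. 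With those caveats the argument is correct.
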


Lemma \ref{lem:Any-closed-ball} implies that if $\mathcal{X}$ is
Polish, then for a (not necessarily convex) set $\Gamma\subseteq\mathcal{P}(\mathcal{X})$,
$\overline{\cconv\Gamma}=\overline{\conv\Gamma}$ where $\conv\Gamma$
is the convex hull of $\Gamma$ under the Lévy--Prokhorov metric.
In other words, under the weak topology, the closed completely convex
hull of a set in $\mathcal{P}(\mathcal{X})$ is just its closed convex
hull. To prove Lemma \ref{lem:Any-closed-ball}, we first observe
that the closure of a set can be written as the intersection of all
its enlargements. 

\begin{lem}
\label{lem:For-a-set}For a set $\Gamma\subseteq\mathcal{P}(\mathcal{X})$,
$\overline{\Gamma}=\inf_{\epsilon>0}\Gamma_{\epsilon]}$, where $\Gamma_{\epsilon]}:=\bigcup_{P_{X}\in\Gamma}B_{\epsilon]}(P_{X})$
for $\epsilon>0$ are closed enlargements of $\Gamma$ under the Lévy--Prokhorov
metric. 
\end{lem}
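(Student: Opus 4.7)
The plan is to prove the double inclusion using nothing more than the fact that the L\'evy--Prokhorov metric $d_P$ metrizes the weak topology on $\mathcal{P}(\mathcal{X})$, so that a point lies in $\overline{\Gamma}$ iff it is a metric limit of points of $\Gamma$. (I read the $\inf$ in the statement as $\bigcap$, which is the only way the identity makes sense.)

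For the inclusion $\overline{\Gamma}\subseteq\bigcap_{\epsilon>0}\Gamma_{\epsilon]}$, fix $Q\in\overline{\Gamma}$ and any $\epsilon>0$. Since $d_P$ metrizes the weak topology, there exists $P\in\Gamma$ with $d_P(P,Q)<\epsilon$, and in particular $d_P(P,Q)\le\epsilon$, so $Q\in B_{\epsilon]}(P)\subseteq\Gamma_{\epsilon]}$. As $\epsilon>0$ was arbitrary, $Q\in\bigcap_{\epsilon>0}\Gamma_{\epsilon]}$.

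For the reverse inclusion, fix $Q\in\bigcap_{\epsilon>0}\Gamma_{\epsilon]}$ and let $U$ be an arbitrary open neighborhood of $Q$ in the weak topology. Choose $\delta>0$ with $B_\delta(Q)\subseteq U$, and pick any $\epsilon\in(0,\delta)$. By hypothesis $Q\in\Gamma_{\epsilon]}$, so there is some $P\in\Gamma$ with $d_P(P,Q)\le\epsilon<\delta$, i.e.\ $P\in B_\delta(Q)\subseteq U$. Hence every open neighborhood of $Q$ meets $\Gamma$, which is the definition of $Q\in\overline{\Gamma}$.

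There is essentially no obstacle here: the lemma is a purely formal consequence of $d_P$ being a metric generating the topology in question, and requires no information-theoretic input. The only point to be careful about is the distinction between the strict and non-strict inequalities in the definitions of $B_\epsilon(\cdot)$ (open ball) and $B_{\epsilon]}(\cdot)$ (closed ball); this is handled in both directions by slightly shrinking or enlarging $\epsilon$, as in the arguments above.
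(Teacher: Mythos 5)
Your proof is correct and follows essentially the same route as the paper: both directions are handled by direct metric arguments (a point of the closure lies within $\epsilon$ of $\Gamma$ for every $\epsilon$, and conversely a point in every $\Gamma_{\epsilon]}$ is a limit of points of $\Gamma$). If anything, your argument for $\overline{\Gamma}\subseteq\bigcap_{\epsilon>0}\Gamma_{\epsilon]}$ is slightly more careful than the paper's one-line assertion, since it does not rely on $\Gamma_{\epsilon]}$ being closed.
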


\begin{proof}
First, $\overline{\Gamma}\subseteq\Gamma_{\epsilon]}$ for all $\epsilon>0$.
Then, $\overline{\Gamma}\subseteq\inf_{\epsilon>0}\Gamma_{\epsilon]}$.
On the other hand, if a point $R\in\Gamma_{\epsilon]}$ for all $\epsilon>0$,
then there is a sequence $\left\{ R_{i}\right\} \subseteq\Gamma$
such that $\Levy(R,R_{i})\to0$, i.e., $R\in\Gamma$ or $R$ is a
limit point of $\Gamma$. Hence, $\overline{\Gamma}\supseteq\inf_{\epsilon>0}\Gamma_{\epsilon]}$.
\end{proof}
We now use Lemma \ref{lem:For-a-set} to prove Lemma \ref{lem:Any-closed-ball}.
\begin{proof}[Proof of Lemma \ref{lem:Any-closed-ball}]
\textbf{}If $Q_{X|Z=z}\in B_{\epsilon]}(P_{X}),\forall z$, then
for any $\delta>\epsilon$,
\begin{equation}
Q_{X|Z=z}(A)\le P_{X}(A_{\delta})+\delta,\forall\textrm{ closed }A\subseteq\mathcal{X},\forall z,\label{eq:-9}
\end{equation}
which, by taking expectations w.r.t. $Q_{Z}$, further implies 
\[
Q_{X}(A)\le P_{X}(A_{\delta})+\delta,\forall\textrm{ closed }A\subseteq\mathcal{X},
\]
i.e., $\Levy(Q_{X},P_{X})\le\delta$. Since $\delta>\epsilon$ is
arbitrary, we have $\Levy(Q_{X},P_{X})\le\epsilon$, i.e., $Q_{X}\in B_{\epsilon]}(P_{X})$.

To prove the second statement, by Lemma \ref{lem:For-a-set}, it suffices
to prove $\cconv\Gamma\subseteq\Gamma_{\epsilon]}$ for all $\epsilon>0$.
 We first prove that $\Gamma_{\epsilon]}$ is convex. If $Q_{X|Z=z}\in\Gamma_{\epsilon]}$,
then there is $P_{X}^{(z)}\in\Gamma$ such that $\Levy(Q_{X|Z=z},P_{X}^{(z)})\le\epsilon$,
which implies for any $\delta>\epsilon$,
\begin{equation}
Q_{X|Z=z}(A)\le P_{X}^{(z)}(A_{\delta})+\delta,\forall\textrm{ closed }A\subseteq\mathcal{X}.\label{eq:-9-1}
\end{equation}
Taking expectations for \eqref{eq:-9-1} w.r.t. discrete distribution
$Q_{Z}$ with finite support, we have for any $\delta>\epsilon$,
\[
Q_{X}(A)\le P_{X}(A_{\delta})+\delta,\forall\textrm{ closed }A\subseteq\mathcal{X},
\]
where $P_{X}:=\sum_{z}Q_{Z}(z)P_{X}^{(z)}$. Hence, $\Levy(Q_{X},P_{X})\le\epsilon$.
By convexity of $\Gamma$, we have $P_{X}\in\Gamma$. So, $Q_{X}\in\Gamma_{\epsilon]}$,
i.e., $\Gamma_{\epsilon]}$ is convex. 

If $\mathcal{X}$ is Polish, then $\mathcal{P}(\mathcal{X})$ with
the Lévy--Prokhorov metric forms a Polish metric space. Let $\hat{\mathcal{P}}$
be a countable dense subset $\mathcal{P}(\mathcal{X})$. Denote $\hat{\Gamma}:=\Gamma_{\epsilon]}\cap\hat{\mathcal{P}}$.
Then, $\hat{\Gamma}=\{R_{i}\}_{i\ge1}$ is a countable dense subset
of $\Gamma_{\epsilon]}$. Denote $\hat{\Gamma}_{\delta]}$ as an closed
enlargement of $\hat{\Gamma}$. Then, $\Gamma_{\epsilon]}\subseteq$$\hat{\Gamma}_{\delta]}\subseteq\Gamma_{\epsilon+\delta]}$.
 Write $\hat{\Gamma}_{\delta]}$ as a partition $\{B_{j}\}$ of $\hat{\Gamma}_{\delta]}$
given by $B_{i}:=B_{\delta]}(R_{i})\backslash\cup_{j=1}^{i-1}B_{j},i\ge1$.
Denote $\mathcal{Z}_{i}:=\{z:Q_{X|Z=z}\in B_{i}\}$ and $q_{i}:=Q_{Z}(\mathcal{Z}_{i})$.
Since $\mathcal{Z}_{i}$ is not necessarily measurable in $\mathcal{Z}$,
to make it measurable, we consider a finer $\sigma$-algebra which
is generated by the sets $\mathcal{Z}_{i}$ and the sets in the $\sigma$-algebra
of $\mathcal{Z}$. Then, under the new $\sigma$-algebra, denote $Q_{Z_{i}}:=Q_{Z}(\cdot|\mathcal{Z}_{i})\circ Q_{X|Z}$
and $Q_{X}:=Q_{Z}\circ Q_{X|Z}=\sum_{i\ge1}q_{i}Q_{Z_{i}}$. Denote
$R_{X}:=\sum_{i\ge1}q_{i}R_{i}$ and $R_{X}^{(n)}:=\frac{\sum_{i=1}^{n}q_{i}R_{i}}{\sum_{i=1}^{n}q_{i}}$. 

Since $B_{i}$ is a subset of $B_{\epsilon]}(R_{i})$, by the first
statement of this lemma, $\Levy(Q_{Z_{i}},R_{i})\le\delta$, which
implies that $\Levy(Q_{X},R_{X})\le\delta$. Moreover, by the definition
of the Lévy--Prokhorov metric, $\Levy(R_{X},R_{X}^{(n)})\le\sum_{i\ge n+1}q_{i}$,
which vanishes as $n\to\infty$. So, $\Levy(Q_{X},R_{X}^{(n)})\le\delta+\sum_{i\ge n+1}q_{i}$.
Since $R_{i}\in\Gamma_{\epsilon]}$ and $R_{X}^{(n)}$ is a convex
combination of $R_{i}$'s, by the convexity of $\Gamma_{\epsilon]}$,
we have $R_{X}^{(n)}\in\Gamma_{\epsilon]}$. So, $Q_{X}\in\Gamma_{\epsilon+2\delta]}$.
Finally, we obtain $\cconv\Gamma\subseteq\cconv\hat{\Gamma}_{\delta]}\subseteq\Gamma_{\epsilon+2\delta]}$.
Since $\epsilon,\delta>0$ are arbitrary, $\cconv\Gamma\subseteq\inf_{\epsilon>0}\Gamma_{\epsilon]}=\overline{\Gamma}$.
\end{proof}
For (completely) convex Borel sets, the following version of Sanov's
theorem holds.
\begin{thm}[Sanov's Theorem for (Completely) Convex Sets]
\label{thm:SanovBounds} The following hold.
\begin{enumerate}
\item \cite[Theorem 1]{csiszar1984sanov} Let $\Gamma\subseteq\mathcal{P}(\mathcal{X})$
be a completely convex Borel set. Then, it holds that 
\begin{align}
\mathbb{P}\{\L_{X^{n}}\in\Gamma\} & \le e^{-nD(\Gamma\|P)}.\label{eq:sanov}
\end{align}
\item Moreover, if $\mathcal{X}$ is Polish, then \eqref{eq:sanov} also
holds for closed convex sets $\Gamma$. 
\end{enumerate}
\end{thm}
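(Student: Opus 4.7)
The plan is to mimic the entropy-method proof of Cramer's theorem in Section \ref{sec:Cramer's-Theorem-(Simple}, now replacing the constraint on the empirical mean by the constraint $\L_{X^n}\in\Gamma$ on the empirical measure itself. For part 1, let $A_n:=\{x^n\in\mathcal{X}^n:\L_{x^n}\in\Gamma\}$; note that $A_n$ is Borel since the map $x^n\mapsto\L_{x^n}$ is continuous (hence measurable) from $\mathcal{X}^n$ to $(\mathcal{P}(\mathcal{X}),\mathcal{B}^w)$. If $P^{\otimes n}(A_n)=0$ the bound is trivial, so assume otherwise and define the auxiliary measure $Q_{X^n}:=P^{\otimes n}(\cdot\,|\,A_n)$. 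Exactly as in \eqref{eq:-1}, one obtains
\[
-\log\mathbb{P}\{\L_{X^n}\in\Gamma\}=D(Q_{X^n}\|P^{\otimes n})\ \ge\ \sum_{i=1}^n D(Q_{X_i}\|P)\ \ge\ n\,D(Q_X\|P),
\]
where the first inequality uses superadditivity of relative entropy under product reference measures and the second uses convexity of $R\mapsto D(R\|P)$, with $Q_X:=\frac{1}{n}\sum_{i=1}^n Q_{X_i}$ the averaged marginal.

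The crucial step is to show $Q_X\in\Gamma$, and this is where complete convexity is used. Let $(\mathcal{Z},\Sigma_{\mathcal{Z}},Q_Z):=(\mathcal{X}^n,\mathcal{B}_{\mathcal{X}}^{\otimes n},Q_{X^n})$ and define the Markov kernel $Q_{X|Z=z}:=\L_{z}$; this is a kernel from $(\mathcal{Z},\Sigma_{\mathcal{Z}})$ to $(\mathcal{X},\mathcal{B}_{\mathcal{X}})$ because for any Borel $B\subseteq\mathcal{X}$ the map $z\mapsto\L_{z}(B)=\frac{1}{n}\sum_{i}\1_B(z_i)$ is measurable. A direct computation shows $Q_Z\circ Q_{X|Z}=\frac{1}{n}\sum_i Q_{X_i}=Q_X$. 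Since $Q_Z$ is concentrated on $A_n$, we may restrict $\mathcal{Z}$ to $A_n$, and on $A_n$ we have $Q_{X|Z=z}=\L_z\in\Gamma$ by the very definition of $A_n$. Thus $Q_X\in\cconv\Gamma=\Gamma$ by complete convexity, so $D(Q_X\|P)\ge D(\Gamma\|P)$, which combined with the chain above yields \eqref{eq:sanov}.

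For part 2, assume $\mathcal{X}$ is Polish and $\Gamma$ is a closed convex Borel set. By Lemma \ref{lem:Any-closed-ball}, $\Gamma\subseteq\cconv\Gamma\subseteq\overline{\Gamma}=\Gamma$, so $\Gamma=\cconv\Gamma$ is completely convex, and part 1 applies. The main obstacle I anticipate is verifying carefully that the averaging step fits the definition of complete convexity, i.e., that $z\mapsto\L_z$ is genuinely a Markov kernel and that the restriction of $Q_{X^n}$ to $A_n$ leaves a version of the kernel satisfying $Q_{X|Z=z}\in\Gamma$ for every $z\in\mathcal{Z}$ (not merely almost every $z$); this just requires redefining the kernel arbitrarily on the $Q_Z$-null set $A_n^c$ without changing $Q_X$. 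All other ingredients—superadditivity, convexity of relative entropy, and Lemma \ref{lem:Any-closed-ball}—are already at hand.
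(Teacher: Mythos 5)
Your proposal is correct and follows essentially the same route as the paper: condition $P^{\otimes n}$ on $A_n$, lower-bound $D(Q_{X^n}\|P^{\otimes n})$ by $nD(Q_X\|P)$ for the averaged marginal $Q_X=\frac1n\sum_i Q_{X_i}=\mathbb{E}_{Q_{X^n}}[\L_{X^n}]$, observe that complete convexity places $Q_X$ in $\Gamma$, and reduce part 2 to part 1 via Lemma \ref{lem:Any-closed-ball}. The only difference is cosmetic: you make explicit the Markov kernel $z\mapsto\L_z$ certifying $Q_X\in\cconv\Gamma$, a detail the paper leaves implicit.
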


The first statement of the theorem above and the proof below are
due to Csiszár \cite[Theorem 1]{csiszar1984sanov}. However, the essentially
same technique was previously used by Massey \cite{Massey1974OnTF},
whose result in fact yields the finite alphabet version of the theorem
above. 
\begin{proof}
Proof of Statement 1: Denote $A=\L^{-1}(\Gamma)$, and $Q_{X^{n}}=P^{\otimes n}(\cdot|A)$.
Then, we have 
\[
-\log P^{\otimes n}(A)=D(Q_{X^{n}}\|P^{\otimes n}).
\]
By the chain rule and the fact that conditioning increases the relative
entropy, the RHS is lower bounded by 
\begin{align}
 & nD(Q_{X_{J}|X^{J-1}J}\|P|Q_{X^{J-1}J})\ge nD(Q_{X_{J}}\|P),\label{eq:-13-1}
\end{align}
where $J\sim\Unif[n]$ is a random time-index independent of $X^{n}$.

On the other hand, $\L_{X^{n}}\in\Gamma$ holds $Q_{X^{n}}$-a.s.
We immediately obtain $\mathbb{E}_{Q_{X^{n}}}\left[\L_{X^{n}}\right]\in\Gamma,$
which can be rewritten as $Q_{X_{J}}\in\Gamma.$

Relaxing $Q_{X_{J}}$ to an arbitrary distribution in $\Gamma$, we
obtain that for any $n\ge1$, 
\begin{align*}
-\frac{1}{n}\log P^{\otimes n}(A) & \ge D(\Gamma\|P).
\end{align*}

Proof of Statement 2: By Lemma \ref{lem:Any-closed-ball}, if $\mathcal{X}$
is Polish, then under the weak topology, a closed convex set is also
a closed completely convex set. Then, applying Statement 1, we obtain
Statement 2. 
\end{proof}
In fact, Sanov's theorem can be extended to any Borel sets.
\begin{thm}[Sanov's Theorem]
\label{thm:sanov} \cite{Sanov61,Dembo} Assume $\mathcal{X}$ is
Polish. For the $\sigma$-algebra $\mathcal{B}^{w}$, the empirical
measures $\L_{X^{n}}$ satisfy the LDP in $\mathcal{P}(\mathcal{X})$
equipped with the weak topology with the good, convex rate function
$D(\cdot\|P)$. 
\end{thm}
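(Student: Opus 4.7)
My plan is to invoke the General Principle (Theorem \ref{thm:general}) by identifying the pointwise rate function at each $R\in\mathcal{P}(\mathcal{X})$, so I need (i) the local lower bound $\gamma_-(R)\ge D(R\|P)$, (ii) the local upper bound $\gamma_+(R)\le D(R\|P)$, (iii) exponential tightness of $\{\mathrm{Law}(\L_{X^n})\}$ on $\mathcal{P}(\mathcal{X})$, and (iv) goodness (compact sublevel sets) of $D(\cdot\|P)$. The good rate function then also gives the convex one since $D(\cdot\|P)$ is convex in its first argument.

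For (i), I fix $R$ and $\epsilon>0$ and apply the convex Sanov bound (Theorem \ref{thm:SanovBounds}) to the closed ball $B_{\epsilon]}(R)$, which is completely convex by Lemma \ref{lem:Any-closed-ball}. This yields
\begin{equation}
-\tfrac{1}{n}\log\mathbb{P}\{\L_{X^n}\in B_{\epsilon]}(R)\}\ge D(B_{\epsilon]}(R)\|P).
\end{equation}
Letting $n\to\infty$ and then $\epsilon\downarrow 0$, and using the lower semicontinuity of $Q\mapsto D(Q\|P)$ under the weak topology, I get $\gamma_-(R)\ge\lim_{\epsilon\downarrow 0}D(B_{\epsilon]}(R)\|P)=D(R\|P)$.

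For (ii), I mimic the entropy proof of Cramer's theorem. For $R$ with $D(R\|P)<\infty$ (the case $D(R\|P)=\infty$ is trivial), set $A_n:=\{x^n:\L_{x^n}\in B_\epsilon(R)\}$. Under $R^{\otimes n}$, the empirical measure $\L_{X^n}$ converges weakly to $R$ almost surely (Varadarajan's theorem on Polish spaces), so $p_n:=R^{\otimes n}(A_n)\to 1$. Splitting $R^{\otimes n}=p_n R^{\otimes n}(\cdot|A_n)+(1-p_n)R^{\otimes n}(\cdot|A_n^c)$ and using convexity as in the proof of Theorem \ref{thm:Cramer} yields
\begin{equation}
D\bigl(R^{\otimes n}(\cdot|A_n)\bigm\| P^{\otimes n}\bigr)\le\frac{nD(R\|P)+H_2(p_n)}{p_n}.
\end{equation}
Since any probability measure concentrated on $A_n$ is a feasible solution to the infimization characterizing $-\log P^{\otimes n}(A_n)$, dividing by $n$ and taking $n\to\infty$ then $\epsilon\downarrow 0$ gives $\gamma_+(R)\le D(R\|P)$.

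For (iii), I use Prokhorov's theorem and tightness of $P$ on the Polish space $\mathcal{X}$. For every $b>0$ and every integer $j\ge 1$, pick a compact $K_j\subseteq\mathcal{X}$ with $P(K_j^c)\le e^{-2b}\cdot 2^{-j}$; then $\mathcal{K}_b:=\{Q\in\mathcal{P}(\mathcal{X}):Q(K_j^c)\le 2^{-j}\,\forall j\}$ is tight and hence relatively compact in the weak topology, and closed, hence compact. A Chernoff/union bound applied to each indicator $\mathbf{1}_{K_j^c}(X_i)$ gives
\begin{equation}
\mathbb{P}\{\L_{X^n}\notin\mathcal{K}_b\}\le\sum_{j\ge 1}\mathbb{P}\{\L_{X^n}(K_j^c)>2^{-j}\}\le\sum_{j\ge 1} e^{-nb},
\end{equation}
from which \eqref{eq:-94-2} follows; this is the main technical obstacle, because it requires the combination of Polishness (to get compacts with arbitrarily small $P$-mass) with an exponential moment estimate. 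Finally, (iv) follows because $\{Q:D(Q\|P)\le c\}$ is closed (by lower semicontinuity) and tight (again by a Chernoff-type argument using $D(Q\|P)\ge Q(K^c)\log(Q(K^c)/P(K^c))-1/e$), hence compact by Prokhorov. Assembling (i)--(iv) and applying Theorem \ref{thm:general} completes the proof of the full LDP.
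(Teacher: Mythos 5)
Your overall architecture --- the general principle (Theorem \ref{thm:general}) plus (i) a local lower bound from the completely convex Sanov bound applied to closed L\'evy--Prokhorov balls, (ii) a local upper bound via the conditional measure $R^{\otimes n}(\cdot|A_{n})$ and the entropy decomposition from Cramer's theorem, and (iii) exponential tightness --- is exactly the paper's route, and steps (i) and (ii) are correct. The one substantive difference in (ii) is that you justify $p_{n}\to1$ by Varadarajan's theorem, whereas the paper derives \emph{exponential} concentration of $\L_{X^{n}}$ around $R$ from the already-established lower bound combined with Pinsker's inequality (Lemma \ref{lem:-exponentially-fast-2}). Both suffice here, since only $p_{n}\to1$ is needed; your version is more elementary, while the paper's quantitative lemma is reused later for the strong Sanov theorem.

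The genuine defect is in your exponential tightness step (iii). With $P(K_{j}^{c})\le e^{-2b}2^{-j}$, the Chernoff bound for $\mathbb{P}\{\L_{X^{n}}(K_{j}^{c})>2^{-j}\}$ gives an exponent of order $n\,2^{-j}\log\bigl(2^{-j}/P(K_{j}^{c})\bigr)\approx n\,2^{-j}(2b-1)$, which tends to $0$ as $j\to\infty$; you cannot obtain $e^{-nb}$ uniformly in $j$ from this choice of compacts. Moreover, even granting that per-term bound, $\sum_{j\ge1}e^{-nb}$ diverges, so the displayed union bound is vacuous as written. The repair is standard: demand doubly exponential decay, say $P(K_{j}^{c})\le e^{-(b+j)2^{j}}$, so that Chernoff with $\lambda=2^{j}(b+j)$ yields $\mathbb{P}\{\L_{X^{n}}(K_{j}^{c})>2^{-j}\}\le e^{-n\lambda 2^{-j}+nP(K_{j}^{c})e^{\lambda}}\le e^{-n(b+j-1)}$, which is summable in $j$ with sum $O(e^{-nb})$; alternatively, do what the paper's Lemma \ref{lem:exponentialtightness} does and fold all the indicators into a single function $V=\sum_{j}\bone_{\mathcal{X}\backslash K_{j}}$ normalized so that $P(e^{V})\le1$, after which one Chernoff bound applied to $\L_{X^{n}}(V)$ finishes the job. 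Your point (iv) on goodness (closedness by lower semicontinuity, uniform tightness of divergence sublevel sets via the two-set log-sum inequality, then Prokhorov) is fine.
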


Before proving Theorem \ref{thm:sanov}, we introduce the fact that
the law of $\L_{X^{n}}$ is exponentially tight.
\begin{lem}[Exponential Tightness]
\label{lem:exponentialtightness} \cite[Lemma 6.2.6]{Dembo} For
each $b\in(0,\infty)$, there is a compact set $\mathcal{K}_{b}\subseteq\mathcal{P}(\mathcal{X})$
such that 
\begin{equation}
\liminf_{n\to\infty}-\frac{1}{n}\log\mathbb{P}\{\L_{X^{n}}\in\mathcal{K}_{b}^{c}\}\ge b.\label{eq:-94}
\end{equation}
In other words, the law of $\L_{X^{n}}$ is exponentially tight.
\end{lem}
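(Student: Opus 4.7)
The plan is to construct $\mathcal{K}_b$ via Prokhorov's theorem and estimate the tail probability using the already-established Sanov bound for (completely) convex sets (Theorem \ref{thm:SanovBounds}) together with the duality formula (Theorem \ref{thm:duality-I-projection-1}), keeping the whole argument inside the entropy framework developed in the paper.

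First, since $\mathcal{X}$ is Polish, the Borel probability measure $P$ is tight, so for each integer $k\ge 1$ I can pick a compact set $K_k\subseteq\mathcal{X}$ with $P(K_k^c)\le\eps_k$, where $\eps_k>0$ will be chosen later to decay rapidly. Setting $\alpha_k:=1/k$, I define
\[
\mathcal{K}_b := \bigcap_{k\ge 1}\bigl\{Q\in\mathcal{P}(\mathcal{X}): Q(K_k^c)\le \alpha_k\bigr\}.
\]
Each defining set is closed and convex in the weak topology: closedness holds because $K_k^c$ is open and so $Q\mapsto Q(K_k^c)$ is weakly lower semicontinuous, while convexity is immediate. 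For uniform tightness of $\mathcal{K}_b$, given $\delta>0$ pick $k$ with $1/k<\delta$; every $Q\in\mathcal{K}_b$ then satisfies $Q(K_k)\ge 1-\delta$. Prokhorov's theorem yields compactness of $\mathcal{K}_b$ in $\mathcal{P}(\mathcal{X})$.

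Next, I apply a union bound,
\[
\mathbb{P}\{\L_{X^n}\in\mathcal{K}_b^c\} \;\le\; \sum_{k\ge 1}\mathbb{P}\{\L_{X^n}(K_k^c)\ge \alpha_k\}.
\]
Each set $\Gamma_k:=\{Q:Q(K_k^c)\ge \alpha_k\}$ is closed and completely convex (being the sublevel set of a linear functional on measures, as noted in the discussion preceding Lemma \ref{lem:Any-closed-ball}), so Theorem \ref{thm:SanovBounds} gives $\mathbb{P}\{\L_{X^n}\in\Gamma_k\}\le e^{-nD(\Gamma_k\|P)}$. Applying the duality in Theorem \ref{thm:duality-I-projection-1} to $f=\mathbf{1}_{K_k^c}$ and evaluating the dual expression at $\lambda=\log(1/\eps_k)$ (valid once $\alpha_k>\eps_k$) yields
\[
D(\Gamma_k\|P)\;\ge\;\alpha_k\log(1/\eps_k)-\log 2.
\]

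Finally, I choose $\eps_k$ small enough that $\alpha_k\log(1/\eps_k)\ge b+2\log k+\log 2$ for every $k$, e.g.\ $\eps_k\le\exp\!\bigl(-k(b+2\log k+\log 2)\bigr)$; such a choice is always feasible because $P$ is tight on the Polish space $\mathcal{X}$. This gives $D(\Gamma_k\|P)\ge b+2\log k$, hence
\[
\sum_{k\ge 1}e^{-nD(\Gamma_k\|P)}\;\le\;e^{-nb}\sum_{k\ge 1}k^{-2n},
\]
and the residual series is bounded by a constant uniformly in $n\ge 1$. Taking $-\frac{1}{n}\log$ and $\liminf_n$ delivers the claimed inequality. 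The main obstacle is the balancing in this last step: the thresholds $\alpha_k$ must tend to zero (so that $\mathcal{K}_b$ is tight) while the products $\alpha_k\log(1/\eps_k)$ must simultaneously dominate the logarithmic correction $2\log k$ required to convert the union bound into a single exponential with rate at least $b$; this forces a very rapid decay of $\eps_k$, which is fortunately guaranteed by the tightness of $P$ on a Polish space.
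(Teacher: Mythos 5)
Your argument is correct (modulo one cosmetic inaccuracy noted below) but takes a genuinely different route from the paper. The paper packages the whole thing into a single potential $V=\sum_{j\ge 0}\mathbf 1_{\mathcal X\setminus K_j}$ chosen so that $P(e^V)\le 1$, declares $\mathcal K_b=\{Q:Q(V)\le b\}$, and then dispatches the tail bound in one line by the Chernoff inequality, using the product structure $\mathbb E_P[e^{n\L_{X^n}(V)}]=\mathbb E_P[e^{V(X)}]^n$; no union bound and no appeal to Sanov or I-projection duality is needed. You instead define $\mathcal K_b$ as a countable intersection of tightness constraints, run a union bound over the countably many bad events $\{\L_{X^n}(K_k^c)\ge\alpha_k\}$, bound each term through Theorem \ref{thm:SanovBounds} (Sanov for completely convex Borel sets) plus the duality of Theorem \ref{thm:duality-I-projection-1}, and then tune the $\eps_k$ aggressively so that the $2\log k$ overhead from the union bound is absorbed. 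Both proofs are valid; the paper's is shorter and more elementary, while yours has the pedagogical merit of staying entirely inside the entropy/I-projection machinery developed earlier, at the cost of the delicate balancing of $\alpha_k$ against $\log(1/\eps_k)$ that you correctly identify as the crux. One small correction: the sets $\Gamma_k=\{Q:Q(K_k^c)\ge\alpha_k\}$ are \emph{not} closed in the weak topology (the map $Q\mapsto Q(K_k^c)$ is only lower semicontinuous, so its \emph{superlevel} sets need not be closed); but this does not affect your argument, since Statement 1 of Theorem \ref{thm:SanovBounds} requires only that $\Gamma_k$ be completely convex and Borel, both of which hold ($\Gamma_k$ is a linear-constraint set, hence completely convex, and it is Borel as a superlevel set of a lower semicontinuous function). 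The sets you intersect to build $\mathcal K_b$, namely $\{Q:Q(K_k^c)\le\alpha_k\}$, \emph{are} closed, which is what Prokhorov's theorem needs.
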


For completeness, the proof of this lemma is given below. 
\begin{proof}[Proof of Lemma \ref{lem:exponentialtightness}]
Choose a non-decreasing sequence $\{K_{j}:j\ge1\}$ of compact subsets
of $\mathcal{X}$ so that $P(K_{j})\le(e-1)\cdot e^{-2j}$, and set
$V=\sum_{j=0}^{\infty}\bone_{\mathcal{X}\backslash K_{j}}$. Then
$V<j$ on $K_{j}$, and so, 
\begin{align*}
P(e^{V}) & =\int_{\bigcup_{j=1}^{\infty}K_{j}}e^{V}\mathrm{d}P=\lim_{j\to\infty}\int_{K_{j}}e^{V}\mathrm{d}P\\
 & =\sum_{j=1}^{\infty}\int_{K_{j}\backslash K_{j-1}}e^{V}\mathrm{d}P\le(e-1)\sum_{j=1}^{\infty}e^{-2j}e^{j}=1.
\end{align*}
At the same time, $V\ge j$ off $K_{j}$, and so for any probability
measure $Q$, $Q(V)\le b\Longrightarrow Q(\mathcal{X}\backslash K_{j})\le\frac{b}{j}$.
By \cite[Theorem 1.12]{prokhorov1956convergence}, $\mathcal{K}_{b}=\{Q:Q(V)\le b\}$
is relatively compact. In addition, because $V$ is lower semi-continuous,
$\mathcal{K}_{b}$ is closed. Hence, for each $b>0$, $\mathcal{K}_{b}$
is compact in $\mathcal{P}(\mathcal{X})$. Finally, 
\begin{align*}
\mathbb{P}\{\L_{X^{n}}\in\mathcal{K}_{b}^{c}\} & =\mathbb{P}\{\L_{X^{n}}(V)>b\}\\
 & \le e^{-nb}\mathbb{E}_{P}[e^{n\L_{X^{n}}(V)}]\\
 & =e^{-nb}\mathbb{E}_{P}[e^{V(X)}]^{n}\\
 & =e^{-nb}.
\end{align*}
\end{proof}
\begin{proof}[Proof of Theorem \ref{thm:sanov}]
 By the general principle given in Theorem \ref{thm:general} and
the exponential tightness given in Lemma \ref{lem:exponentialtightness},
to prove Theorem \ref{thm:sanov}, it suffices to show that $\gamma_{-}(Q)=\gamma_{+}(Q)=D(Q\|P)$
for all $Q\in\mathcal{P}(\mathcal{X})$, where 
\begin{align*}
\gamma_{-}(Q) & =\lim_{\epsilon\downarrow0}\liminf_{n\to\infty}E_{n,\epsilon}(Q)\\
\gamma_{+}(Q) & =\lim_{\epsilon\downarrow0}\limsup_{n\to\infty}E_{n,\epsilon}(Q),
\end{align*}
and 
\[
E_{n,\epsilon}(Q)=-\frac{1}{n}\log\mathbb{P}\{\L_{X^{n}}\in B_{\epsilon}(Q)\}=\inf_{R^{(n)}:R^{(n)}(\L^{-1}(B_{\epsilon}(Q)))=1}\frac{1}{n}D(R^{(n)}\|P^{\otimes n}).
\]

Proof of $\gamma_{-}\ge D(\cdot\|P)$: Since the closed ball $B_{\epsilon]}(Q)$
is completely convex, by Theorem \ref{thm:SanovBounds}, 
\begin{align}
-\frac{1}{n}\log\mathbb{P}\{\L_{X^{n}}\in B_{\epsilon}(Q)\} & \ge D(B_{\epsilon]}(Q)\|P).\label{eq:-95-2}
\end{align}
Taking $\lim_{\epsilon\downarrow0}\liminf_{n\to\infty}$ and by the
lower semicontinuity of $D(\cdot\|P)$, we have $\gamma_{-}(Q)\ge D(Q\|P)$.

Proof of $\gamma_{+}\le D(\cdot\|P)$: Before proving the upper bound
in Theorem \ref{thm:sanov}, we first prove the following lemma. 
\begin{lem}[Concentration on Balls]
 \label{lem:-exponentially-fast-2}  For any $\epsilon>0$, 
\begin{align}
\liminf_{n\to\infty}-\frac{1}{n}\log Q^{\otimes n}(\L^{-1}(B_{\epsilon}(Q)^{c}) & \ge\epsilon^{2}/2.\label{eq:-95-1-2-1}
\end{align}
\end{lem}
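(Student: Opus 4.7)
The plan is to combine the bounded-differences (McDiarmid) concentration inequality, which itself admits an entropy-method proof via tensorization of entropy on product measures, with the standard weak convergence of empirical measures $\L_{X^n}\to Q$ under $Q^{\otimes n}$.

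First, I would set $h(x^n):=\Levy(\L_{x^n},Q)$ and verify that $h$ has bounded differences with constants $c_i=1/n$. Indeed, if $x^n$ and $\tilde x^n$ differ only in the $i$-th coordinate, then for every measurable set $A$ we have $|\L_{x^n}(A)-\L_{\tilde x^n}(A)|\le 1/n$, so $\|\L_{x^n}-\L_{\tilde x^n}\|_{\TV}\le 1/n$, and by the inequality $\|\cdot-\cdot\|_{\TV}\ge\Levy(\cdot,\cdot)$ from \eqref{eq:D-TV-L} we conclude $|h(x^n)-h(\tilde x^n)|\le 1/n$. McDiarmid's inequality then yields, for every $t>0$,
\[
Q^{\otimes n}\bigl\{h(X^n)\ge \E_{Q^{\otimes n}}[h(X^n)]+t\bigr\}\le \exp(-2nt^2).
\]

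Second, I would argue $\E_{Q^{\otimes n}}[h(X^n)]\to 0$. Since $\mathcal X$ is Polish, the empirical measure $\L_{X^n}$ converges weakly to $Q$, $Q^{\otimes n}$-almost surely; this is the standard Varadarajan/Glivenko--Cantelli result, obtained by applying the SLLN to a countable separating family of bounded continuous test functions and invoking the portmanteau theorem. Because the L\'evy--Prokhorov metric metrizes the weak topology on $\mathcal P(\mathcal X)$, we have $h(X^n)\to 0$ a.s., and since $0\le h\le 1$, dominated convergence gives $\E[h(X^n)]\to 0$.

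Combining, for any $\eta\in(0,\epsilon)$ and all sufficiently large $n$, $\E[h(X^n)]\le\eta$, so
\[
Q^{\otimes n}(\L^{-1}(B_\epsilon(Q)^c))=Q^{\otimes n}\{h(X^n)\ge\epsilon\}\le\exp(-2n(\epsilon-\eta)^2).
\]
Taking $-\tfrac{1}{n}\log$ and $\liminf_{n\to\infty}$ gives $2(\epsilon-\eta)^2$; letting $\eta\downarrow 0$ produces $2\epsilon^2$, which is even stronger than the claimed $\epsilon^2/2$. The main obstacle is invoking the two ingredients cleanly: McDiarmid's inequality (which, however, fits the paper's entropy methodology, since it is derivable from entropy tensorization combined with Hoeffding's lemma on each coordinate) and the weak almost-sure convergence $\L_{X^n}\to Q$ in Polish space. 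Both are standard; once in hand, the one-coordinate perturbation estimate $\|\L_{x^n}-\L_{\tilde x^n}\|_{\TV}\le 1/n$ immediately furnishes the Gaussian-type tail.
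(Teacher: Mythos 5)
Your proof is correct, but it follows a genuinely different route from the paper. The paper's argument is a two-line bootstrap: since $B_{\epsilon}(Q)^{c}$ is closed, it applies the large-deviation \emph{lower} bound already established at that point in the proof (the pointwise bound $\gamma_{-}(R)\ge D(R\|Q)$ from Theorem \ref{thm:SanovBounds}, upgraded to closed sets via Theorem \ref{thm:general}(a) and the exponential tightness of Lemma \ref{lem:exponentialtightness}), which gives $\liminf_{n}-\frac{1}{n}\log Q^{\otimes n}(\L^{-1}(B_{\epsilon}(Q)^{c}))\ge D(B_{\epsilon}(Q)^{c}\|Q)$; the constant $\epsilon^{2}/2$ then drops out of Pinsker's inequality together with $\|\cdot\|_{\TV}\ge\Levy(\cdot,\cdot)$ from \eqref{eq:D-TV-L}, since any $R\notin B_{\epsilon}(Q)$ satisfies $D(R\|Q)\ge\Levy(R,Q)^{2}/2\ge\epsilon^{2}/2$. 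You instead prove concentration directly, applying McDiarmid's inequality to $h(x^{n})=\Levy(\L_{x^{n}},Q)$ and controlling $\E[h]$ via Varadarajan's almost-sure weak convergence of empirical measures on a Polish space. Both are sound. What the paper's route buys is economy and self-containment within the entropy framework: it recycles machinery already proved and needs only Pinsker as a quantitative input. What your route buys is independence from the Sanov lower bound and exponential tightness, plus a sharper exponent ($2\epsilon^{2}$ versus $\epsilon^{2}/2$), at the cost of importing two external results. One small caution: under the paper's normalization of $\|\cdot\|_{\TV}$ (the $L_{1}$ form implicit in \eqref{eq:D-TV-L}), changing one coordinate gives $\|\L_{x^{n}}-\L_{\tilde{x}^{n}}\|_{\TV}\le 2/n$ rather than $1/n$; the clean way to get $c_{i}=1/n$ is to bound $\Levy(\L_{x^{n}},\L_{\tilde{x}^{n}})$ directly from $\sup_{A}|\L_{x^{n}}(A)-\L_{\tilde{x}^{n}}(A)|\le 1/n$. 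Even with $c_{i}=2/n$ your argument still yields exactly $\epsilon^{2}/2$, so the lemma follows either way.
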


This lemma is proven as follows. Since $B_{\epsilon}(Q)^{c}$ is closed,
by Statement (a) of Theorem \ref{thm:general} and the lower bound
proven above, 
\begin{align}
\liminf_{n\to\infty}-\frac{1}{n}\log Q^{\otimes n}(\L^{-1}(B_{\epsilon}(Q)^{c})) & \ge\inf_{R\in B_{\epsilon}(Q)^{c}}\gamma_{-}(R)\ge D(B_{\epsilon}(Q)^{c}\|Q)\ge\epsilon^{2}/2,\label{eq:-95-1-3}
\end{align}
where the last inequality follows by \eqref{eq:D-TV-L}. So, Lemma
\ref{lem:-exponentially-fast-2} holds.

We turn back to proving $\gamma_{+}\le D(\cdot\|P)$. Denote $A_{n}=\L^{-1}(B_{\epsilon}(Q))$.
Denote $p_{n}:=Q^{\otimes n}(A_{n})$, which, by Lemma \ref{lem:-exponentially-fast-2},
converges to $1$ as $n\to\infty$. 

Following steps similar to those in the proof of Theorem \ref{thm:Cramer},
\begin{align*}
\limsup_{n\to\infty}-\frac{1}{n}\log P^{\otimes n}(A_{n}) & \le\limsup_{n\to\infty}\frac{nD(Q_{X}\|P)+H_{2}(p_{n})}{np_{n}}\\
 & =D(Q_{X}\|P).
\end{align*}
Therefore, $\gamma_{+}(Q)\le D(Q\|P)$. 

Combining the two inequalities proven above with the fact that $\gamma_{-}\le\gamma_{+}$,
we obtain $\gamma_{-}(Q)=\gamma_{+}(Q)=D(Q\|P)$ for all $Q\in\mathcal{P}(\mathcal{X})$. 
\end{proof}

\subsection{\label{sec:Sanov's-theorem-1}Strong Sanov's Theorem}

We next strengthen Sanov's theorem for convex sets. The resultant
result is called the \emph{strong Sanov's theorem}. Such a result
incorporates the strong Cramer's theorem as a special case. Recall
that $\mathcal{X}$ is a Hausdorff topological space. 
\begin{thm}[Strong Sanov's Theorem for Convex Sets]
 Let $\Gamma$ be a convex Borel set. Assume that $D(\Gamma\|P)$
is attained by some $Q^{*}$ (i.e., the I-projection exists). Denote
$P_{\iota}$ as the law of $\imath_{Q^{*}\|P}(X)$ with $X\sim Q^{*}$.
Suppose that $P_{\iota}$ is non-lattice, or it is lattice with maximal
step $d$ such that $0<P_{\iota}(D(Q^{*}\|P))<1$. Denote $c=1$ if
$P_{\iota}$ is non-lattice, and $c=\frac{d}{1-e^{-d}}$ if $P_{\iota}$
is lattice. 
\begin{enumerate}
\item It holds that 
\begin{align}
\mathbb{P}\{\L_{X^{n}}\in\Gamma\} & \leq e^{-nD(Q^{*}\|P)},\label{eq:-21-1}
\end{align}
and 
\begin{align}
\mathbb{P}\{\L_{X^{n}}\in\Gamma\} & \lesssim\frac{c}{\sqrt{2\pi n\mathrm{V}(Q^{*}\|P)}}e^{-nD(Q^{*}\|P)}.\label{eq:-21}
\end{align}
\item Moreover, if additionally, $B_{\epsilon}(Q^{*})\cap\mathcal{H}_{+}\subseteq\Gamma$,
then 
\begin{align}
\mathbb{P}\{\L_{X^{n}}\in\Gamma\} & \sim\frac{c}{\sqrt{2\pi n\mathrm{V}(Q^{*}\|P)}}e^{-nD(Q^{*}\|P)},\label{eq:-21-2}
\end{align}
where 
\[
\mathcal{H}_{+}:=\left\{ R:\int\left(\log\frac{\mathrm{d}Q^{*}}{\mathrm{d}P}\right)\mathrm{d}R\ge D(Q^{*}\|P)\right\} .
\]
\end{enumerate}
\end{thm}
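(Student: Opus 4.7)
The plan is to mimic the proof of the strong Cramer's theorem (Theorem \ref{thm:LD-2-1}) but replace its pointwise moment calculation by the hyperplane characterization of the I-projection. Set $W_{i} := \imath_{Q^{*}\|P}(X_{i}) - D(Q^{*}\|P)$ with $X_{i} \sim Q^{*}$ i.i.d.\ under the tilted law. Then $\mathbb{E}_{Q^{*}}[W_{i}] = 0$ and $\mathrm{Var}_{Q^{*}}[W_{i}] = \mathrm{V}(Q^{*}\|P)$, and the lattice/non-lattice hypothesis on $P_{\iota}$ transfers to $W_{i}$ with the same maximal step $d$ (since $W_{i}$ is a deterministic shift of the random variable with law $P_{\iota}$), so Lemma \ref{lem:expect} applies directly with $V = \mathrm{V}(Q^{*}\|P)$ and the constant $c$ in the statement.

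Bound \eqref{eq:-21-1} is just Theorem \ref{thm:SanovBounds} applied to the convex Borel set $\Gamma$, using $D(\Gamma\|P) = D(Q^{*}\|P)$. For the sharper \eqref{eq:-21}, change measure from $P^{\otimes n}$ to $Q^{*\otimes n}$ on the support of $Q^{*}$ to get
\[
\mathbb{P}\{\L_{X^{n}} \in \Gamma\} = e^{-nD(Q^{*}\|P)}\, \mathbb{E}_{Q^{*}}\!\left[\bone_{\{\L_{X^{n}} \in \Gamma\}}\, e^{-\sum_{i=1}^{n} W_{i}}\right]
\]
(residual $P$-mass outside $\mathrm{supp}(Q^{*})$ is inessential for the upper bound). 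Invoke Theorem \ref{thm:-A-probability}: since $Q^{*}$ is the I-projection of $P$ onto the convex set $\Gamma$, every $R \in \Gamma$ satisfies $\int \log(dQ^{*}/dP)\,dR \geq D(Q^{*}\|P)$, i.e.\ $R \in \mathcal{H}_{+}$. Applied formally to $R = \L_{X^{n}}$, this reads $(1/n)\sum_{i} \imath_{Q^{*}\|P}(X_{i}) \geq D(Q^{*}\|P)$, i.e.\ $\sum W_{i} \geq 0$. Thus
\[
\mathbb{E}_{Q^{*}}\!\left[\bone_{\{\L_{X^{n}} \in \Gamma\}}\, e^{-\sum W_{i}}\right] \leq \mathbb{E}_{Q^{*}}\!\left[\bone_{\{\sum W_{i} \geq 0\}}\, e^{-\sum W_{i}}\right] \sim \frac{c}{\sqrt{2\pi n \mathrm{V}(Q^{*}\|P)}}
\]
by Lemma \ref{lem:expect}, proving \eqref{eq:-21}.

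For the matching lower bound in Part~2, the hypothesis $B_{\epsilon}(Q^{*}) \cap \mathcal{H}_{+} \subseteq \Gamma$ yields
\[
\mathbb{P}\{\L_{X^{n}} \in \Gamma\} \geq e^{-nD(Q^{*}\|P)}\, \mathbb{E}_{Q^{*}}\!\left[\bone_{\{\L_{X^{n}} \in B_{\epsilon}(Q^{*})\}}\,\bone_{\{\sum W_{i} \geq 0\}}\, e^{-\sum W_{i}}\right].
\]
Lemma \ref{lem:-exponentially-fast-2} gives $Q^{*\otimes n}(\{\L_{X^{n}} \notin B_{\epsilon}(Q^{*})\}) \leq e^{-n\epsilon^{2}/2}$, and on $\{\sum W_{i} \geq 0\}$ the factor $e^{-\sum W_{i}} \leq 1$, so dropping the ball indicator introduces only an exponentially small error which is $o(n^{-1/2})$. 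Hence the right-hand side is asymptotically $e^{-nD(Q^{*}\|P)} \cdot c/\sqrt{2\pi n \mathrm{V}(Q^{*}\|P)}$ by Lemma \ref{lem:expect}, and combined with the upper bound this yields \eqref{eq:-21-2}.

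The main obstacle is making the passage ``$\L_{X^{n}} \in \Gamma \Rightarrow \sum W_{i} \geq 0$'' rigorous. When $P$ is non-atomic, $\L_{X^{n}}$ is a.s.\ singular with respect to $P$, so $D(\L_{X^{n}}\|P) = \infty$ and Theorem \ref{thm:-A-probability}'s inequality becomes trivially true rather than informative; the equivalent hyperplane form $\int \log(dQ^{*}/dP)\,d\L_{X^{n}} \geq D(Q^{*}\|P)$ is not an automatic consequence. The integral is nevertheless $Q^{*\otimes n}$-a.s.\ finite (each $X_{i}$ lies in $\{dQ^{*}/dP > 0\}$ a.s.), and I would justify the inequality either by approximating $\L_{X^{n}}$ by mixtures $(1-\eta)\L_{X^{n}} + \eta Q^{*} \in \Gamma$ (using convexity) and applying the hyperplane inequality to their $Q^{*}$-absolutely-continuous component before letting $\eta \downarrow 0$, or, in the important case where $\Gamma$ is specified by linear constraints $R(f_{j}) \geq \alpha_{j}$ for which $Q^{*}$ has the explicit exponential form of Lemma \ref{lem:identity}, by computing $\int \imath_{Q^{*}\|P}\,d\L_{X^{n}} = \sum_{j} \lambda_{j}^{*} \L_{X^{n}}(f_{j}) - \log Z$ directly and using $\L_{X^{n}}(f_{j}) \geq \alpha_{j}$ together with $\lambda_{j}^{*} \geq 0$ to conclude $\sum W_{i} \geq 0$. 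A secondary technicality is that the change-of-measure equality requires $P \ll Q^{*}$, which is typical of the I-projection setting and can be assumed or, alternatively, handled by noting that any $P^{\otimes n}$-mass outside $\mathrm{supp}(Q^{*})^{n}$ contributes only an upper bound term that is dominated by the relative entropy $e^{-nD(Q^{*}\|P)}$ itself.
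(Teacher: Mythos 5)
Your proposal follows essentially the same route as the paper's proof: the same change of measure to $Q^{*\otimes n}$ with $W_{i}=\imath_{Q^{*}\|P}(X_{i})-D(Q^{*}\|P)$, the same use of the I-projection inequality of Theorem \ref{thm:-A-probability} to convert $\L_{X^{n}}\in\Gamma$ into $\sum_{i}W_{i}\ge0$, the same appeal to Lemma \ref{lem:expect} for the $c/\sqrt{2\pi n\mathrm{V}(Q^{*}\|P)}$ factor, and the same ball-intersection decomposition (with the complement controlled by Lemma \ref{lem:-exponentially-fast-2}) for the matching lower bound in Part~2. Your closing discussion of how to rigorously pass from $D(R\|P)\ge D(R\|Q^{*})+D(Q^{*}\|P)$ to the hyperplane form $\int\log(\mathrm{d}Q^{*}/\mathrm{d}P)\,\mathrm{d}\L_{x^{n}}\ge D(Q^{*}\|P)$ for the (generally $P$-singular) empirical measure is a legitimate point --- the paper performs this substitution without comment --- and your proposed mixture and linear-constraint arguments are reasonable ways to close it.
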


In fact, \eqref{eq:-21-1} is restatement of Statement 2 of Theorem
\ref{thm:SanovBounds}. 
\begin{proof}
We first prove \eqref{eq:-21}. Then, similarly to \eqref{eq:-15},
we have 
\begin{align}
\mathbb{P}\{\L_{X^{n}}\in\Gamma\} & =e^{-nD(Q^{*}\|P)}\mathbb{E}\left[\bone_{\{\L_{X^{n}}\in\Gamma\}}e^{-\sum_{i=1}^{n}W_{i}}\right],\label{eq:-15-1}
\end{align}
where $X^{n}\sim Q^{*\otimes n}$ and $W_{i}=\log\frac{\mathrm{d}Q^{*}}{\mathrm{d}P}(X_{i})-D(Q^{*}\|P)$.
Note that $\mathbb{E}\left[W_{i}\right]=0$ and $\mathrm{Var}\left[W_{i}\right]=\mathrm{V}(Q^{*}\|P)$.

A completely convex set must be convex. For convex $\Gamma$ and any
$R\in\Gamma$, it holds that 
\[
D(R\|P)\ge D(R\|Q^{*})+D(Q^{*}\|P).
\]
For $x^{n}$ such that $\L_{x^{n}}\in\Gamma$, substituting $\L_{x^{n}}$
into the inequality above, we have 
\begin{equation}
\mathbb{E}_{\L_{x^{n}}}\left[\log\frac{\mathrm{d}Q^{*}}{\mathrm{d}P}(X)\right]\ge D(Q^{*}\|P).\label{eq:-18}
\end{equation}
The LHS is $\frac{1}{n}\sum_{i=1}^{n}\log\frac{\mathrm{d}Q^{*}}{\mathrm{d}P}(x_{i})$.
Hence, under i.i.d. $X_{i}\sim Q^{*}$, \eqref{eq:-18} reduces to
$\sum_{i=1}^{n}W_{i}\ge0.$ This implies that 
\begin{align*}
\mathbb{E}\left[\bone_{\{\L_{X^{n}}\in\Gamma\}}e^{-\sum_{i=1}^{n}W_{i}}\right] & \le\mathbb{E}\left[\bone_{\{\sum_{i=1}^{n}W_{i}\ge0\}}e^{-\sum_{i=1}^{n}W_{i}}\right]\le1.
\end{align*}
Moreover, by Lemma \ref{lem:expect}, we obtain \eqref{eq:-21}.

We next prove \eqref{eq:-21-2}. We now lower bound the expectation
in \eqref{eq:-15-1}: 
\begin{align}
 & \mathbb{E}\left[\bone_{\{\L_{X^{n}}\in\Gamma\}}e^{-\sum_{i=1}^{n}W_{i}}\right]\nonumber \\
 & \ge\mathbb{E}\left[\bone_{\{\L_{X^{n}}\in B_{\epsilon}(Q^{*})\cap\mathcal{H}_{+}\}}e^{-\sum_{i=1}^{n}W_{i}}\right]\nonumber \\
 & \ge\mathbb{E}\left[\bone_{\{\L_{X^{n}}\in\mathcal{H}_{+}\}}e^{-\sum_{i=1}^{n}W_{i}}\right]\nonumber \\
 & \qquad-\mathbb{E}\left[\bone_{\{\L_{X^{n}}\in\mathcal{H}_{+}\cap B_{\epsilon}(Q^{*})^{c}\}}e^{-\sum_{i=1}^{n}W_{i}}\right].\label{eq:-99}
\end{align}
Note that, as shown above, $\L_{X^{n}}\in\mathcal{H}_{+}$ is equivalent
to $\sum_{i=1}^{n}W_{i}\ge0$. So, the first expectation in the last
line above satisfies 
\begin{align*}
 & \mathbb{E}\left[\bone_{\{\L_{X^{n}}\in\mathcal{H}_{+}\}}e^{-\sum_{i=1}^{n}W_{i}}\right]\\
 & =\mathbb{E}\left[\bone_{\{\sum_{i=1}^{n}W_{i}\ge0\}}e^{-\sum_{i=1}^{n}W_{i}}\right]\\
 & \sim\frac{c}{\sqrt{2\pi n\mathrm{V}(Q^{*}\|P)}},\textrm{ as }n\to\infty.
\end{align*}
The second expectation in the last line above is upper bounded by
\begin{align*}
 & \mathbb{P}\{\L_{X^{n}}\in\mathcal{H}_{+}\cap B_{\epsilon}(Q^{*})^{c}\}\le\mathbb{P}\{\L_{X^{n}}\in B_{\epsilon}(Q^{*})^{c}\}.
\end{align*}
By Lemma \ref{lem:-exponentially-fast-2}, the RHS decays exponentially
fast as $n\to\infty$.

So, we finally obtain 
\begin{align}
\mathbb{P}\{\L_{X^{n}}\in\Gamma\} & \gtrsim\frac{c}{\sqrt{2\pi n\mathrm{V}(Q^{*}\|P)}}e^{-nD(Q^{*}\|P)}.\label{eq:-21-3}
\end{align}
Combining this with \eqref{eq:-21} yields \eqref{eq:-21-2}.
\end{proof}

\section{\label{sec:Gibbs-conditioning-principle}Gibbs Conditioning Principle}

The large deviation result is closely related to the Gibbs conditioning
principle. The latter concerns an important question on conditional
distributions in statistical mechanics. Let $X_{1},X_{2},...,X_{n}$
be a sequence of i.i.d. random variables with each follows $P$ on
a Polish space $\mathcal{X}$. Denote $\L_{X^{n}}$ as its empirical
measure. Given a set $\Gamma\subseteq\mathcal{P}(\mathcal{X})$ and
a constraint $\L_{X^{n}}\in\Gamma$, what is the conditional law of
$X_{1}$ when $n$ is large? In other words, what are the limit points,
as $n\to\infty$, of the conditional probability measures 
\[
Q_{n}(B)=\mathbb{P}\left\{ X_{1}\in B|\L_{X^{n}}\in\Gamma\right\} ,B\in\mathcal{B}_{\mathcal{X}}?
\]

\begin{thm}[Gibbs's Principle]
\cite[Theorem 1]{csiszar1984sanov} Suppose that $\Gamma$ is a completely
convex Borel set such that $I_{\Gamma}:=D(\Gamma^{o}\|P)=D(\overline{\Gamma}\|P)$.
Then, $\{Q_{n}\}$ converges to the unique $Q^{*}$ attaining $D(\overline{\Gamma}\|P)$
as $n\to\infty$ under the relative entropy (and hence also in the
weak topology), i.e., $\lim_{n\to\infty}D(Q_{n}\|Q^{*})=0.$ 
\end{thm}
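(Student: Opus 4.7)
The plan is to exploit the entropy method together with the Pythagoras-type inequality of Theorem \ref{thm:-A-probability}. Write $A_{n}:=\L^{-1}(\Gamma)$ and $\tilde{P}_{n}:=P^{\otimes n}(\cdot\mid A_{n})$. Since the conditioning event $A_{n}$ is symmetric in $(X_{1},\dots,X_{n})$, the coordinates remain exchangeable under $\tilde{P}_{n}$, so the first marginal $Q_{n}=\tilde{P}_{n}(X_{1}\in\cdot)$ coincides with $\mathbb{E}_{\tilde{P}_{n}}[\L_{X^{n}}]$. Because $\L_{X^{n}}\in\Gamma$ holds $\tilde{P}_{n}$-almost surely and $\Gamma$ is completely convex, the definition of $\cconv\Gamma$ forces $Q_{n}\in\Gamma$.

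First I would derive an upper bound on $D(Q_{n}\|P)$. By the chain rule and the fact that conditioning only increases relative entropy, exactly as in the proof of Theorem \ref{thm:Cramer},
\[
-\log P^{\otimes n}(A_{n})=D(\tilde{P}_{n}\|P^{\otimes n})\ge nD(Q_{n}\|P).
\]
On the other hand, Sanov's theorem applied to the open set $\Gamma^{o}\subseteq\Gamma$ together with the hypothesis $D(\Gamma^{o}\|P)=D(\overline{\Gamma}\|P)=D(Q^{*}\|P)$ yields
\[
\limsup_{n\to\infty}-\tfrac{1}{n}\log P^{\otimes n}(A_{n})\le D(Q^{*}\|P).
\]
Combining the two bounds gives $D(Q_{n}\|P)\le D(Q^{*}\|P)+o(1)$.

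Next I would invoke Theorem \ref{thm:-A-probability}. The I-projection $Q^{*}$ on $\overline{\Gamma}$ exists by lower semicontinuity of $D(\cdot\|P)$ together with closedness of $\overline{\Gamma}$, and is unique by strict convexity of $D(\cdot\|P)$ and convexity of $\overline{\Gamma}$ (the closure of a convex set). Since $Q_{n}\in\Gamma\subseteq\overline{\Gamma}$, Theorem \ref{thm:-A-probability} applied to $\overline{\Gamma}$ yields
\[
D(Q_{n}\|P)\ge D(Q_{n}\|Q^{*})+D(Q^{*}\|P),
\]
and combining with the upper bound from the previous step gives $D(Q_{n}\|Q^{*})\le o(1)$, which is the asserted convergence.

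The critical ingredients are exactly the two hypotheses of the theorem: complete convexity of $\Gamma$ is what allows the conclusion $Q_{n}\in\Gamma$, which is required to invoke Theorem \ref{thm:-A-probability} at $Q_{n}$; and the matching condition $D(\Gamma^{o}\|P)=D(\overline{\Gamma}\|P)$ is what makes the Sanov lower bound on the probability exponentially tight against the I-projection value $D(Q^{*}\|P)$. The main subtlety I expect is precisely this matching: without it, the two bounds on $-\tfrac{1}{n}\log P^{\otimes n}(A_{n})$ would be separated by a strictly positive gap, and one could only conclude that $Q_{n}$ stays near the set of near-minimizers of $D(\cdot\|P)$ over $\overline{\Gamma}$ rather than converging to the unique minimizer $Q^{*}$.
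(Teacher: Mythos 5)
Your proposal is correct and follows essentially the same route as the paper: complete convexity forces $Q_{n}=\mathbb{E}_{\tilde P_{n}}[\L_{X^{n}}]\in\Gamma$, the superadditivity bound $-\log P^{\otimes n}(A_{n})\ge nD(Q_{n}\|P)$ combined with the Sanov upper bound on the exponent over $\Gamma^{o}$ and the matching hypothesis gives $D(Q_{n}\|P)\le D(Q^{*}\|P)+o(1)$, and the Pythagorean inequality of Theorem \ref{thm:-A-probability} applied to $\overline{\Gamma}$ closes the argument. If anything, your version is slightly more careful than the paper's in stating that $Q^{*}$ is the I-projection on the convex set $\overline{\Gamma}$ (rather than $\Gamma$) before invoking the inequality $D(Q_{n}\|P)\ge D(Q_{n}\|Q^{*})+D(Q^{*}\|P)$.
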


This theorem is a consequence of Theorem \ref{thm:SanovBounds}. An
example of $\Gamma$ is $\Gamma=\left\{ Q:\mathbb{E}_{Q}[X]\ge\alpha\right\} $
with $\alpha\ge\mathbb{E}_{P}[X]$. The $Q^{*}$ in this case is 
\[
\frac{\mathrm{d}Q^{*}}{\mathrm{d}P}(x)=\frac{e^{\lambda x}}{\mathbb{E}_{P}\left[e^{\lambda X}\right]}
\]
for some $\lambda\ge0$ such that $\mathbb{E}_{Q^{*}}\left[X\right]=\alpha$. 
\begin{proof}
Denote $A_{n}=\{x^{n}:\L_{x^{n}}\in\Gamma\}$, $Q_{X^{n}}=P^{\otimes n}(\cdot|A_{n})$,
and $J\sim\Unif[n]$ is a random time-index independent of $X^{n}$.
Then, it is easily verified that $Q_{X_{J}}=Q_{X_{1}}=Q_{n}.$ By
the proof of Theorem \ref{thm:SanovBounds}, 
\begin{align*}
D(\Gamma^{o}\|P) & \ge\limsup_{n\to\infty}-\frac{1}{n}\log\mathbb{P}\{\L_{X^{n}}\in\Gamma\}\\
 & \ge\limsup_{n\to\infty}D(Q_{n}\|P)\\
 & \ge\liminf_{n\to\infty}D(Q_{n}\|P)\ge D(\Gamma\|P)\ge D(\overline{\Gamma}\|P).
\end{align*}
So, $\lim_{n\to\infty}D(Q_{n}\|P)=I_{\Gamma}.$

For a convex Borel set $\Gamma$ and any $R\in\Gamma$, it holds that
\[
D(R\|P)\ge D(R\|Q^{*})+D(Q^{*}\|P).
\]
So, 
\[
D(Q_{n}\|P)\ge D(Q_{n}\|Q^{*})+D(Q^{*}\|P).
\]
Taking limit, we obtain 
\[
I_{\Gamma}\ge\lim_{n\to\infty}D(Q_{n}\|Q^{*})+I_{\Gamma}.
\]
Hence, we obtain $\lim_{n\to\infty}D(Q_{n}\|Q^{*})=0.$ 
\end{proof}

\section{\label{sec:Proof-of-Theorem}Appendix: Proof of Theorem \ref{thm:duality-I-projection}}

Denote $\Lambda(\lambda):=\log P(e^{\lambda f})$. Denote its effective
domain $\mathcal{D}_{\Lambda}:=\left\{ \lambda\in\mathbb{R}:\Lambda(\lambda)<+\infty\right\} $.
Then, 
\begin{align*}
 & \gamma^{*}(\alpha)=\sup_{\lambda\in\mathcal{D}_{\Lambda}}\lambda\alpha-\log P(e^{\lambda f}).
\end{align*}
Note that $\Lambda(0)=0$ and hence, $0\in\mathcal{D}_{\Lambda}$.
If $\mathcal{D}_{\Lambda}=\{0\}$, then $\bar{\gamma}^{*}(\alpha)=0$
for any $\alpha$. For this case, $P(f)$ does not exist, i.e., $P(f^{+})=P(f^{-})=+\infty$
where $f^{+}=\max\{f,0\}$ and $f^{-}=-\min\{f,0\}$ \cite[Lemma 2.2.5]{Dembo}.

Denote $R_{0}=P(\cdot|[a_{0},b_{0}])$ and $R_{1}=P(\cdot|[a_{1},b_{1}])$,
where we choose $-a_{0},b_{0},-a_{1},b_{1}$ sufficiently large and
satisfying $a_{0}\le a_{1}\le b_{0}\le b_{1}$, $R_{0}(f)\le\alpha$
and $R_{1}(f)\ge\alpha$. Denote $R_{\theta}=\bar{\theta}R_{0}+\theta R_{1}$
with $\theta\in[0,1]$ chosen such that $R_{\theta}(f)=\alpha$. Here
$\bar{\theta}=1-\theta$. Since $R_{\theta}$ is feasible for the
infimization in $\bar{\gamma}(\alpha)$, we have 
\begin{align*}
\bar{\gamma}(\alpha) & \ge D(R_{\theta}\|P)\\
 & =-P([a_{1},b_{0}])\log\left(P([a_{1},b_{0}])\right)\\
 & \qquad-\bar{\theta}P([a_{0},a_{1}])\log\left(\bar{\theta}P([a_{0},a_{1}])\right)\\
 & \qquad-\theta P([b_{0},b_{1}])\log\left(\theta P([b_{0},b_{1}])\right).
\end{align*}
Note that as $-a_{0},b_{0},-a_{1},b_{1}\to\infty$, it holds that
$P([a_{1},b_{0}])\to1$ and $P([a_{0},a_{1}]),P([b_{0},b_{1}])\to0$.
So, $D(R_{\theta}\|P)\to0$, which implies $\bar{\gamma}(\alpha)=0$.
Hence, $\bar{\gamma}^{*}(\alpha)=\bar{\gamma}(\alpha)$ for the case
$\mathcal{D}_{\Lambda}=\{0\}$.

We next consider the case $\mathcal{D}_{\Lambda}\neq\{0\}$. For this
case, $P(f)$ exists (possibly it is equal to $+\infty$ or $-\infty$).

Denote $\lambda_{\max}:=\sup\mathcal{D}_{\Lambda}$ and $\lambda_{\min}:=\inf\mathcal{D}_{\Lambda}$.
Then, $\mathcal{D}_{\Lambda}$ is an interval with endpoints $\lambda_{\min},\lambda_{\max}$.
Note that $\Lambda$ is convex on $\mathbb{R}$, and continuously
differentiable on $\mathcal{D}_{\Lambda}^{o}$. Moreover, $\Lambda(0)=0$
and $\Lambda'(\lambda)=Q_{\lambda}(f)$ where $\frac{\mathrm{d}Q_{\lambda}}{\mathrm{d}P}=\frac{e^{\lambda f}}{P(e^{\lambda f})}$.

Decompose the integral into two parts: $P(e^{\lambda f})=\int_{\{f>0\}}e^{\lambda f}\mathrm{d}P+\int_{\{f\le0\}}e^{\lambda f}\mathrm{d}P$.
By Lebesgue's dominated convergence theorem, $\lambda\mapsto\int_{\{f\le0\}}e^{\lambda f}\mathrm{d}P$
is continuous for $\lambda\ge0$ since $e^{\lambda f}\le1$ for any
$\lambda\ge0$. By Lebesgue's dominated convergence theorem, $\lambda\mapsto\int_{\{f>0\}}e^{\lambda f}\mathrm{d}P$
is continuous for $0\le\lambda<\lambda_{\max}$, and by the monotone
convergence theorem, it is left-continuous at $\lambda_{\max}$ if
$\lambda_{\max}<+\infty$ (even if $\Lambda(\lambda_{\max})=+\infty$).
So, $\Lambda$ is continuous on $[0,+\infty)$ if $\lambda_{\max}=+\infty$,
and $[0,\lambda_{\max}]$ if $\lambda_{\max}<+\infty$. Similarly,
$\Lambda$ is continuous on $(-\infty,0]$ if $\lambda_{\min}=-\infty$,
and $[\lambda_{\min},0]$ if $\lambda_{\min}>-\infty$. Hence, $\Lambda$
is continuous on $\mathcal{D}_{\Lambda}$. Similarly, $\lambda\mapsto Q_{\lambda}(f)$
is also continuous on $\mathcal{D}_{\Lambda}$.

For any $Q$ such that $Q(f)=\alpha$ and $\lambda\in\mathcal{D}_{\Lambda}$,
\begin{align}
D(Q\|P) & \geq\lambda\left(Q(f)-Q_{\lambda}(f)\right)+D(Q_{\lambda}\|P)\label{eq:-55-1-2-1}\\
 & =\lambda\left(\alpha-Q_{\lambda}(f)\right)+D(Q_{\lambda}\|P)\\
 & =\lambda\alpha-\log P(e^{\lambda f}),
\end{align}
which implies $\gamma(\alpha)\ge\gamma^{*}(\alpha).$

We next prove $\gamma(\alpha)\le\gamma^{*}(\alpha).$ We now divide
the proof into four cases.

Case I: We consider the case in which there exists $\lambda^{*}\in\mathcal{D}_{\Lambda}$
such that $Q_{\lambda^{*}}(f)=\alpha$, where $Q_{\lambda}$ a probability
measure with density 
\[
\frac{\mathrm{d}Q_{\lambda^{*}}}{\mathrm{d}P}=\frac{e^{\lambda^{*}f}}{P(e^{\lambda^{*}f})}.
\]
For this case, $Q_{\lambda^{*}}$ is a feasible solution to the infimization
in the definition of $\gamma(\alpha)$. So, 
\[
\gamma(\alpha)\le D(Q_{\lambda^{*}}\|P)=\lambda^{*}\alpha-\log P(e^{\lambda^{*}f})\le\gamma^{*}(\alpha).
\]

We next consider other cases, in which there is no such $\lambda^{*}$
described above, i.e., $Q_{\lambda}(f)>\alpha,\forall\lambda\in\mathcal{D}_{\Lambda}$
or $Q_{\lambda}(f)<\alpha,\forall\lambda\in\mathcal{D}_{\Lambda}$.

Case II: We consider the case in which $\lambda_{\max}=+\infty$ and
$Q_{\lambda}(f)<\alpha,\forall\lambda\in\mathcal{D}_{\Lambda}$, which
implies that $\lim_{\lambda\to\infty}Q_{\lambda}(f)\le\alpha$, i.e.,
$\mathrm{esssup}_{P}f\le\alpha$.

If $\alpha>\mathrm{esssup}_{P}f$, then both $\gamma(\alpha)$ and
$\gamma^{*}(\alpha)$ are equal to $+\infty$.

If $\alpha=\mathrm{esssup}_{P}f$ and $P(f^{-1}(\alpha))>0$, then
define $Q_{\infty}=P(\cdot|f^{-1}(\alpha))$. For this case, $\gamma(\alpha)=D(Q_{\infty}\|P)$
and $\gamma^{*}(\alpha)=\lim_{\lambda\to\infty}-\log P(e^{\lambda(f-\alpha)})$.
Denote $A_{\delta}:=\{f\ge\alpha-\delta\}$ for $\delta>0$ and $A=f^{-1}(\alpha)$.
Then, $P(A)\le P(e^{\lambda(f-\alpha)})\le P(A_{\delta})+e^{-\lambda\delta}$.
Hence, $P(A)\le\lim_{\lambda\to\infty}P(e^{\lambda(f-\alpha)})\le P(A_{\delta})$
for any $\delta>0$, which implies $\lim_{\lambda\to\infty}P(e^{\lambda(f-\alpha)})\le\lim_{\delta\downarrow0}P(A_{\delta})=P(\lim_{\delta\downarrow0}A_{\delta})=P(A_{0})$.
Here, we use the obvious fact $\lim_{\delta\downarrow0}A_{\delta}=A_{0}$.
Moreover, by the definition of essential supremum, $P(A)=P(A_{0})$.
Therefore, $\gamma^{*}(\alpha)=-\log P(A)=D(Q_{\infty}\|P)=\gamma(\alpha)$.

If $\alpha=\mathrm{esssup}_{P}f$ and $P(f^{-1}(\alpha))=0$, then
$P(f\ge\alpha)=0$ which implies that for any $Q\ll P$, $Q(f\ge\alpha)=0$.
Since the probability measure is continuous in events, we have $\lim_{\alpha'\uparrow\alpha}Q(f<\alpha')=Q(f<\alpha)=1$.
So, for any $\delta>0$, there is $\alpha'<\alpha$ such that $1-\delta<Q(f<\alpha')\le1$,
which implies $Q(f)<\alpha.$ Hence, $\gamma(\alpha)=+\infty$. On
the other hand, $\gamma^{*}(\alpha)=\lim_{\lambda\to\infty}-\log P(e^{\lambda(f-\alpha)})$.
Similarly, for any $\delta>0$, there is $\alpha'<\alpha$ such that
$1-\delta<P(f<\alpha')\le1$, which implies for any real $\lambda$,
$P(e^{\lambda f})<e^{\lambda\alpha'}(1-\delta)+e^{\lambda\alpha}\delta.$
Hence, $\lim_{\lambda\to\infty}P(e^{\lambda(f-\alpha)})\le\lim_{\lambda\to\infty}e^{\lambda(\alpha'-\alpha)}(1-\delta)+\delta=\delta$,
which could be arbitrarily small since $\delta>0$ is arbitrary. So,
$\gamma^{*}(\alpha)=+\infty$.

Case III: We now consider the case in which $\lambda_{\max}<+\infty$
and $Q_{\lambda}(f)<\alpha,\forall\lambda\in\mathcal{D}_{\Lambda}$,
which implies that $\lambda_{\max}\in\mathcal{D}_{\Lambda}$ since
otherwise, $\Lambda'(\lambda)=Q_{\lambda}(f)\to\infty$ as $\lambda\uparrow\lambda_{\max}$.
For this case, obviously, 
\begin{align*}
 & \gamma^{*}(\alpha)=\lambda_{\max}\alpha-\log P(e^{\lambda_{\max}f}).
\end{align*}
We next prove that $\gamma(\alpha)$ is also equal to this expression.

For $b>0$, define $f_{b}(x)=\min\{f(x),b\}$. Denote $Q_{b,\lambda}$
as a probability measure with density 
\[
\frac{\mathrm{d}Q_{b,\lambda}}{\mathrm{d}P}=\frac{e^{\lambda f_{b}}}{P(e^{\lambda f_{b}})}.
\]

\begin{fact}
\label{fact:For-all-sufficiently-1} For all sufficiently large $b$,
$Q_{b,\lambda_{\max}}(f)<\alpha$. 
\end{fact}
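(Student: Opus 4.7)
The strategy is to establish that $Q_{b,\lambda_{\max}}(f)\to Q_{\lambda_{\max}}(f)$ as $b\to\infty$. Since the Case III hypothesis guarantees $Q_{\lambda_{\max}}(f)<\alpha$ strictly, the claim that $Q_{b,\lambda_{\max}}(f)<\alpha$ for all sufficiently large $b$ is an immediate consequence. The plan is therefore to pass to the limit inside the numerator and denominator of the ratio defining $Q_{b,\lambda_{\max}}$, using the integrability afforded by $\lambda_{\max}\in\mathcal{D}_{\Lambda}$.

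The key observation is that $f_b(x)=\min\{f(x),b\}\to f(x)$ pointwise and is non-decreasing in $b$. Writing
\[
Q_{b,\lambda_{\max}}(f)=\frac{P\bigl(f\,e^{\lambda_{\max}f_b}\bigr)}{P\bigl(e^{\lambda_{\max}f_b}\bigr)},
\]
it suffices to prove the two convergences $P(e^{\lambda_{\max}f_b})\to P(e^{\lambda_{\max}f})$ and $P(f\,e^{\lambda_{\max}f_b})\to P(f\,e^{\lambda_{\max}f})$, both finite since $\lambda_{\max}\in\mathcal{D}_{\Lambda}$ and $Q_{\lambda_{\max}}(f)$ is by hypothesis a well-defined real number, which forces $P(|f|e^{\lambda_{\max}f})<\infty$.

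For the denominator, if $\lambda_{\max}\ge0$ then $e^{\lambda_{\max}f_b}\uparrow e^{\lambda_{\max}f}$ and monotone convergence applies directly. If $\lambda_{\max}<0$, then taking $b\ge0$ gives $e^{\lambda_{\max}f_b}=e^{\lambda_{\max}b}\le1$ on $\{f>b\}$ and $e^{\lambda_{\max}f_b}=e^{\lambda_{\max}f}$ on $\{f\le b\}$, producing the integrable dominator $e^{\lambda_{\max}f}+1$, so dominated convergence suffices. The numerator convergence is handled analogously by splitting $f=f^{+}-f^{-}$ and applying monotone convergence to each piece; e.g., for $\lambda_{\max}\ge0$ and $b\ge0$, on $\{f\le0\}$ we have $f_b=f$ so $f^{-}e^{\lambda_{\max}f_b}=f^{-}e^{\lambda_{\max}f}$ is independent of $b$, while on $\{f>0\}$, $f^{+}e^{\lambda_{\max}f_b}\uparrow f^{+}e^{\lambda_{\max}f}$. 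The case $\lambda_{\max}<0$ is symmetric with the roles of the positive and negative parts swapped. Taking the ratio then yields $Q_{b,\lambda_{\max}}(f)\to Q_{\lambda_{\max}}(f)<\alpha$.

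The main technical work, and the principal obstacle, lies in the routine case distinction on the sign of $\lambda_{\max}$ to select the appropriate convergence theorem and dominator, and in verifying the finiteness of $P(|f|e^{\lambda_{\max}f})$ at the boundary point $\lambda_{\max}$ of $\mathcal{D}_{\Lambda}$. Both of these follow cleanly from the standing assumption that $\lambda_{\max}\in\mathcal{D}_{\Lambda}$ together with the well-posedness of $Q_{\lambda_{\max}}(f)$ as a finite real number, so no novel estimate is required.
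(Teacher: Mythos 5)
Your argument is correct and is essentially the paper's own proof: the paper likewise deduces the fact from $\lim_{b\to\infty}Q_{b,\lambda_{\max}}(f)=Q_{\lambda_{\max}}(f)<\alpha$ via the monotone convergence theorem, and you merely spell out the justification in more detail (splitting into positive and negative parts and choosing MCT versus DCT). Note only that the case $\lambda_{\max}<0$ is vacuous here, since $0\in\mathcal{D}_{\Lambda}$ forces $\lambda_{\max}\ge 0$.
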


\begin{fact}
\label{fact:Given-any-,-1} Given any $\lambda>\lambda_{\max}$, it
holds that $Q_{b,\lambda}(f)\to+\infty$ as $b\to\infty$. 
\end{fact}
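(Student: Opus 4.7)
The plan is to bound $Q_{b,\lambda}(f)$ below by the slope at $\lambda$ of the truncated cumulant generating function $\Lambda_b(\mu):=\log P(e^{\mu f_b})$, and then to force that slope to blow up by exploiting the hypothesis $\lambda>\lambda_{\max}$: this makes $\Lambda_b(\lambda)\to+\infty$ while $\Lambda_b(0)=0$, and convexity of $\Lambda_b$ does the rest. The truncated $\Lambda_b$ is always finite (since $f_b\le b$), so I can freely differentiate it, whereas the untruncated $\Lambda$ is infinite at $\lambda$, which is precisely what drives the blow-up.

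Concretely, I would carry out the following steps in order. First, since $0\in\mathcal{D}_\Lambda$, we have $\lambda_{\max}\ge 0$, so $\lambda>\lambda_{\max}$ forces $\lambda>0$. Second, because $\lambda>0$ and $f_b\nearrow f$ pointwise as $b\to\infty$, monotone convergence gives $P(e^{\lambda f_b})\nearrow P(e^{\lambda f})=+\infty$, the last equality coming from $\lambda\notin\mathcal{D}_\Lambda$; hence $\Lambda_b(\lambda)\to+\infty$. Third, as $f_b\le b$, the function $\Lambda_b$ is finite and convex on $[0,\infty)$ with $\Lambda_b(0)=0$, and a standard differentiation under the integral (justified on compact subintervals of $(0,\infty)$ by a dominating function) yields $\Lambda_b'(\mu)=Q_{b,\mu}(f_b)$ for $\mu>0$. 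Fourth, the chord slope inequality for the convex function $\Lambda_b$ gives
\[
\Lambda_b'(\lambda)\;\ge\;\frac{\Lambda_b(\lambda)-\Lambda_b(0)}{\lambda}\;=\;\frac{\Lambda_b(\lambda)}{\lambda}.
\]
Fifth, since $f_b\le f$ pointwise, $Q_{b,\lambda}(f)\ge Q_{b,\lambda}(f_b)=\Lambda_b'(\lambda)$. Chaining these observations yields
\[
Q_{b,\lambda}(f)\;\ge\;\frac{\Lambda_b(\lambda)}{\lambda}\;\longrightarrow\;+\infty\quad\text{as }b\to\infty,
\]
which is exactly the claim.

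The only real subtlety is that one must check $Q_{b,\lambda}(f)$ is a well-defined extended real in $(-\infty,+\infty]$, so that the inequality $Q_{b,\lambda}(f)\ge Q_{b,\lambda}(f_b)$ is meaningful; otherwise a possibly infinite negative part of $f$ could wreck the comparison. For $\lambda>0$, one has the pointwise bound $|f|e^{\lambda f}\le 1/(\lambda e)$ on $\{f\le 0\}$ (where $f_b=f$), which upper bounds the $P$-integral $\int f^- e^{\lambda f_b}\,\mathrm{d}P$ by $1/(\lambda e)$ uniformly in $b$, and hence $\int f^-\,\mathrm{d}Q_{b,\lambda}\le 1/(\lambda e\, P(e^{\lambda f_b}))<\infty$. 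This is the main book-keeping concern; no assumption on $f^-$ or on the exact shape of $\mathcal{D}_\Lambda$ beyond $\mathcal{D}_\Lambda\neq\{0\}$ and $\lambda_{\max}<+\infty$ is needed, and the argument avoids solving any subsidiary extremal problem.
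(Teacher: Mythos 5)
Your proof is correct, and it takes a genuinely different route from the paper's. The paper proves the fact by a direct integral-splitting estimate: for any fixed level $b_0$ it writes
\[
Q_{b,\lambda}(f)\ \ge\ Q_{b,\lambda}(f_b)\ \ge\ \frac{\int_{\{f\le b_0\}}e^{\lambda f}f\,\mathrm{d}P+b_0\int_{\{b_0<f\le b\}}e^{\lambda f_b}\,\mathrm{d}P}{\int_{\{f\le b_0\}}e^{\lambda f}\,\mathrm{d}P+\int_{\{b_0<f\le b\}}e^{\lambda f_b}\,\mathrm{d}P},
\]
and uses $P(e^{\lambda f_b})\to P(e^{\lambda f})=+\infty$ to conclude that the ratio has $\liminf$ at least $b_0$, then lets $b_0\to\infty$. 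You instead route the same key input ($\Lambda_b(\lambda)\to+\infty$ by monotone convergence) through convex analysis: $\Lambda_b$ is finite and convex on $[0,\infty)$ with $\Lambda_b(0)=0$, differentiation under the integral gives $\Lambda_b'(\lambda)=Q_{b,\lambda}(f_b)$, and the chord-slope inequality yields $Q_{b,\lambda}(f)\ge Q_{b,\lambda}(f_b)\ge\Lambda_b(\lambda)/\lambda\to+\infty$. Your version is more compact and gives an explicit quantitative lower bound $\Lambda_b(\lambda)/\lambda$, at the price of justifying differentiation under the integral sign (which you do correctly, with the uniform bound $f^-e^{\mu f}\le 1/(\mu e)$ on the negative part); the paper's version avoids any differentiation and works with elementary estimates only. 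Your preliminary observations — that $\lambda>\lambda_{\max}\ge 0$ forces $\lambda>0$, and that $Q_{b,\lambda}(f)$ is a well-defined element of $(-\infty,+\infty]$ because $\int f^-\,\mathrm{d}Q_{b,\lambda}<\infty$ — are both needed and correctly handled; the paper leaves the latter implicit. No gaps.
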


Fact \ref{fact:For-all-sufficiently-1} follows since by the monotone
convergence theorem, $\lim_{b\to\infty}Q_{b,\lambda_{\max}}(f)=\lim_{b\to\infty}\frac{P(e^{\lambda_{\max}f_{b}}f)}{P(e^{\lambda_{\max}f_{b}})}=\frac{P(e^{\lambda_{\max}f}f)}{P(e^{\lambda_{\max}f})}=Q_{\lambda_{\max}}(f)<\alpha$.

We now prove Fact \ref{fact:Given-any-,-1}. Note that by the monotone
convergence theorem, $P(e^{\lambda f_{b}})\to P(e^{\lambda f})=+\infty$
as $b\to\infty$. Furthermore, for any $b_{0}<b$, 
\begin{align*}
Q_{b,\lambda}(f)\ge Q_{b,\lambda}(f_{b}) & =\frac{P(e^{\lambda f_{b}}f_{b})}{P(e^{\lambda f_{b}})}\\
 & =\frac{\int_{\{f\le b_{0}\}}e^{\lambda f}f\mathrm{d}P+\int_{\{b_{0}<f\le b\}}e^{\lambda f_{b}}f_{b}\mathrm{d}P}{\int_{\{f\le b_{0}\}}e^{\lambda f}\mathrm{d}P+\int_{\{b_{0}<f\le b\}}e^{\lambda f_{b}}\mathrm{d}P}\\
 & \ge\frac{\int_{\{f\le b_{0}\}}e^{\lambda f}f\mathrm{d}P+b_{0}\int_{\{b_{0}<f\le b\}}e^{\lambda f_{b}}\mathrm{d}P}{\int_{\{f\le b_{0}\}}e^{\lambda f}\mathrm{d}P+\int_{\{b_{0}<f\le b\}}e^{\lambda f_{b}}\mathrm{d}P}.
\end{align*}
For fixed $b_{0}$, both the first integrals in numerator and denominator
at the last line are finite, and the integral $\int_{\{b_{0}<f\le b\}}e^{\lambda f_{b}}\mathrm{d}P$
tends to infinity as $b\to\infty$. So, $\liminf_{b\to\infty}Q_{b,\lambda}(f_{b})\ge b_{0}.$
Since $b_{0}$ is arbitrary, $\lim_{b\to\infty}Q_{b,\lambda}(f_{b})=+\infty$,
completing the proof of Fact \ref{fact:Given-any-,-1}.

By Facts \ref{fact:For-all-sufficiently-1} and \ref{fact:Given-any-,-1}
and the continuity of $\lambda\mapsto Q_{b,\lambda}(f)$, for sufficiently
large $b$, there is $\lambda_{b}>\lambda_{\max}$ such that $Q_{b,\lambda_{b}}(f)=\alpha$.
So, 
\begin{equation}
\gamma(\alpha)\le\lambda_{b}\alpha-\log P(e^{\lambda_{b}f_{b}}).\label{eq:-36-2-1-1}
\end{equation}

Letting $b\to\infty$, it holds that $\lambda_{b}\downarrow\lambda_{\max}$
and $f_{b}\uparrow f$, which implies that the RHS converges to $\lambda_{\max}\alpha-\log P(e^{\lambda_{\max}f})$.
Hence, $\gamma(\alpha)\le\lambda_{\max}\alpha-\log P(e^{\lambda_{\max}f})$.

Case IV: We consider the case $Q_{\lambda}(f)>\alpha,\forall\lambda\in\mathcal{D}_{\Lambda}$.
Following arguments similar to those in Cases II and III, $\gamma(\alpha)=\gamma^{*}(\alpha)$
still holds for this case. 

\section*{Acknowledgements}

The author would like to thank Prof. Vincent Y. F. Tan from the National
University of Singapore for his suggestion to investigate the G\"artner--Ellis
theorem by using the entropy method. 

 \bibliographystyle{plainnat}
\bibliography{ref}

\end{document}